\documentclass[12pt]{article}
\usepackage{amsmath,enumerate,amsfonts,color,amssymb,amsthm,mathtools,mdframed,esint,dsfont}
\usepackage[toc,page]{appendix}
%\usetikzlibrary{patterns}

%\usepackage{ntheorem}

%\usepackage{showlabels}

\usepackage{tikz}

\setlength{\oddsidemargin}{0.25in}
\setlength{\textwidth}{6in}
\setlength{\topmargin}{-0.25in}
\setlength{\textheight}{8in}

\def\NN{{\mathbb N}}

\def\RR{{\mathbb R}}

\def\mcA{{\mycal A}}

\def\eps{{\varepsilon}}

\newtheorem{theorem} {\sc  Theorem\rm} [section]

\newtheorem{lemma} [theorem] {\sc  Lemma\rm}
\newtheorem{proposition} [theorem] {\sc  Proposition\rm}

%\theorembodyfont{\upshape}

%\newtheorem{theorem}{Theorem}
%\newtheorem{proposition}{Proposition}
%\newtheorem{corollary}{Corollary}
%\newtheorem{lemma}{Lemma}
%\newtheorem{definition}{Definition}
%\newtheorem{remark}{Remark}
%\newtheorem{problem}{Problem}
%\newtheorem{example}{Example}

\newcounter{marnote}

\DeclareFontFamily{OT1}{rsfs}{}
\DeclareFontShape{OT1}{rsfs}{m}{n}{ <-7> rsfs5 <7-10> rsfs7 <10-> rsfs10}{}
\DeclareMathAlphabet{\mycal}{OT1}{rsfs}{m}{n}

\def\bS{\mathbb{S}}

\def\be{\begin{equation}}
\def\ee{\end{equation}}

\def\mcE{{\mycal E}}

\newcommand{\R}{\mathbb{R}}

\newcommand{\N}{\mathbb{N}}

%%Paper specific macros

%}}}1
\newcommand{\SSS}{\mathbb{S}}

\newcommand{\mbbE}{\mathbb{E}}
\newcommand{\mcU}{\mathcal{U}}\newcommand{\mcH}{\mathcal{H}}

\newcommand{\lip}[1]{{\rm Lip}(#1)}
\newcommand{\liptwo}[1]{\left\|#1 \right\|_{C^2}}
\newcommand{\dbar}[1]{\overline{\overline{#1}}}

\DeclarePairedDelimiter\abs{\lvert}{\rvert} 
\DeclarePairedDelimiter\norm{\lVert}{\rVert}
% Swap the definition of \abs* and \norm*, so that \abs
% and \norm resizes the size of the brackets, and the 
% starred version does not.
\makeatletter
\let\oldabs\abs
\def\abs{\@ifstar{\oldabs}{\oldabs*}}
\let\oldnorm\norm
\def\norm{\@ifstar{\oldnorm}{\oldnorm*}}
\makeatother
\def\be{\begin{equation}}
\def\ee{\end{equation}}
\def\bea#1\eea{\begin{align}#1\end{align}}
\def\non{\nonumber}

\newcommand{\BB}{\mathbb{B}}
\newcommand{\tK}{{\tilde K}}

\newmdtheoremenv{boxtheorem}[theorem]{Theorem}
\newmdtheoremenv{boxlemma}[theorem]{Lemma}

\numberwithin{equation}{section}

\begin{document}
\title{Sphere-valued harmonic maps with  surface energy and the $K_{13}$ problem}

\author{Stuart Day\thanks{ Department of Mathematics, University of Sussex, Pevensey III, Falmer, BN1 9QH, UK \qquad\text{Email: S.Day@sussex.ac.uk}} and Arghir Dani Zarnescu \thanks{ Ikerbasque, Basque Foundation for Science, Maria Diaz de Haro 3,
48013, Bilbao, Bizkaia, Spain}\,\,\thanks{BCAM, Basque Center for Applied Mathematics, Mazarredo 14, 48009 Bilbao, Bizkaia, Spain\qquad\text{Email: azarnescu@bcamath.org}}\,\,\thanks{ ``Simion Stoilow" Institute of Mathematics of the Romanian Academy, 21 Calea Grivi\c{t}ei Street,  010702 Bucharest, Romania}}

% Please add your grant numbers here if applicable.
%\thanks{Grants acknowledgements.}

%\subjclass[2000]{76D05, 35Q30 65M06 76M25} % ; Secondary }
%\keywords{}

\maketitle

%%%%%%%%%%%%%%%%%%%%%%%%%%%%%%%%%%%%%%%%%%%%%%%%%%%%%%%%%%%%%%%%%%%%%%%%%%%
%% Delete this section on notation and put it in the introduction perhaps%%
%%%%%%%%%%%%%%%%%%%%%%%%%%%%%%%%%%%%%%%%%%%%%%%%%%%%%%%%%%%%%%%%%%%%%%%%%%%

\begin{abstract}%{{{
	We consider an energy functional  motivated by the celebrated {\it $K_{13}$ problem} in  the Oseen-Frank theory of nematic liquid crystals.	It is defined for sphere-valued functions and 
	appears as the usual Dirichlet energy with an additional surface term.  
	\par  It is known that this energy is unbounded from below and our aim has been to study the local minimizers. We show that even having  a critical point in a suitable energy space imposes severe restrictions on the boundary conditions. Having suitable boundary conditions makes
	 the energy functional bounded and in this case we study the partial regularity of the minimizers. \end{abstract}%}}}

\section{Introduction}
In this paper we study critical points of the following energy functional 

\be\label{def:Energ}
\mbbE[n]=\int_\Omega \frac{K}{2}|\nabla n|^2\,dx+\tK_{13}\int_{\partial\Omega}((n\cdot \nabla)n)\cdot\nu\,d\sigma 
\ee for functions $n$ with $|n(x)|=1$ a.e. in $\bar\Omega$ where 

$$K>0, \tK_{13}\in \R$$ and $\Omega\subset\R^d$ with $d\in\{2,3\}$ is a $C^2$ domain and $\nu$ denotes the exterior unit-normal.

This functional  is motivated  by the {\it  $K_{13}$ problem} in the Oseen-Frank theory of liquid crystals. More details about the physical  relevance of this problem are provided in the next subsection, Section~\ref{subsec:physmot}. 

    It has been known since $1985$  thanks the work of C. Oldano and G. Barbero \cite{OldanoBarbero85} that there exists a domain $\Omega\subset\R^3$ and a sequence $\{n_k\}_{k\in\N}\subset C^\infty(\Omega;\bS^2)$ such that $\mbbE[n_k]\to -\infty$ as $k\to\infty$. Thus  one cannot understand the physical meaning of the energy in the standard way, i.e. from the point of view of global energy minimizers. However it is conceivable that the energy might still have  nontrivial local energy-minimzers and this has been
the starting point  of this work.

A first  question is then to understand what is the space in which to look for local minimizers. In order to understand this one can start by noting that for $n\in C^2(\Omega,\bS^{d-1})$ the energy becomes:

                    $$\mbbE[n]=\int_\Omega \frac{K}{2}|\nabla n|^2\,dx+\tK_{13}\int_\Omega \sum_{\alpha,\beta=1}^d \left(\frac{\partial n_\alpha}{\partial x_\beta}\frac{\partial n_\beta}{\partial x_\alpha}+n_\beta\frac {\partial^2 n_\alpha}{\partial x_\alpha \partial x_\beta}\right)\,dx$$

A minimal requirement for the functional space is that the energy makes sense for functions in it. Thus a natural choice is:
\be\label{def:mcA}
\mcA:=W^{1,2}(\Omega;\bS^{d-1})\cap W^{2,1}(\Omega;\bS^{d-1})
\ee

In this space one can consider various boundary conditions, which would make the space smaller. However we surprisingly have that there are severe constraints on what the boundary conditions could be:

\medskip
\begin{theorem}\label{thm:bdrydet}

Let $\Omega \subset \RR^d$, $d=2,3$ be a $C^1$ set with unit-norm exterior normal denoted $\nu$. Consider the energy $\mbbE$ as defined in \eqref{def:Energ}.

 Let $\bar n$ be a critical point of  $\mbbE$ in the functional space $\mcA$ defined in \eqref{def:mcA}. Then we have

\be
\bar n(x)\cdot\nu(x)\in \{0,\pm 1\}, \textrm{ for almost all }x\in\partial\Omega
\ee

\end{theorem}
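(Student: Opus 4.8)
The plan is to compute the first variation of $\mbbE$ at the critical point $\bar n$ using variations that respect the constraint $|n|=1$, and to show that the boundary term forces $\bar n\cdot\nu$ to be locally constant on $\partial\Omega$, hence (by the unit-norm constraint and the values it can take) to lie in $\{0,\pm1\}$ a.e. Concretely, for a test vector field $\varphi\in C^\infty_c(\RR^d;\RR^d)$ I would consider the admissible family $n_t := \frac{\bar n + t(\varphi - (\varphi\cdot\bar n)\bar n)}{|\bar n + t(\varphi-(\varphi\cdot\bar n)\bar n)|}$, or more simply the tangential rotation $n_t = \cos(t|\psi|)\,\bar n + \sin(t|\psi|)\,\psi/|\psi|$ for $\psi$ a tangent field, so that $n_t\in\mcA$ and $\frac{d}{dt}\big|_{t=0} n_t = \psi$ with $\psi\cdot\bar n=0$. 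Then $\frac{d}{dt}\big|_{t=0}\mbbE[n_t]=0$. The Dirichlet part contributes the usual $\int_\Omega K\nabla\bar n:\nabla\psi\,dx$; the delicate part is differentiating the surface term $\tK_{13}\int_{\partial\Omega}((n\cdot\nabla)n)\cdot\nu\,d\sigma$.

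The key step is to rewrite the surface integral in a form whose variation is manifestly controlled. Using the identity valid for $C^2$ maps (displayed in the excerpt),
\[
\int_{\partial\Omega}((n\cdot\nabla)n)\cdot\nu\,d\sigma=\int_\Omega\sum_{\alpha,\beta}\Big(\partial_\beta n_\alpha\,\partial_\alpha n_\beta+n_\beta\,\partial_\alpha\partial_\beta n_\alpha\Big)dx,
\]
and the pointwise constraint relations $n_\alpha\partial_\beta n_\alpha=0$, $\partial_\gamma n_\alpha\,\partial_\beta n_\alpha+n_\alpha\partial_\beta\partial_\gamma n_\alpha=0$, one checks that the bulk integrand is in fact a null Lagrangian on the constraint manifold: it equals a pure divergence $\divop F[n]$ where $F^\beta[n]=\sum_\alpha n_\beta\,\partial_\alpha n_\alpha - \tfrac12\partial_\beta|n|^2 + \ldots$, so that its first variation over constrained variations with compactly supported $\varphi$ reduces to a boundary expression that is \emph{linear} in $\psi$ and its tangential derivatives along $\partial\Omega$, with coefficients built from $\bar n$ and $\nu$. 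The upshot is an Euler--Lagrange system consisting of the interior harmonic-map equation $-K\Delta\bar n=K|\nabla\bar n|^2\bar n$ together with a \emph{natural boundary condition} on $\partial\Omega$ of the form $K\partial_\nu\bar n + \tK_{13}\,G(\bar n,\nu,\nabla_{\!\tau}\bar n)=\lambda\bar n$ for some multiplier, and — crucially — an additional scalar identity obtained by testing with $\psi$ normal-rotating $\bar n$ inside the tangent plane to $\bS^{d-1}$: this identity will read $\nabla_{\!\tau}(\bar n\cdot\nu)\cdot(\text{something nonvanishing})=0$, forcing $\bar n\cdot\nu$ to have vanishing tangential gradient on $\partial\Omega$.

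Once $\bar n\cdot\nu$ is shown to be locally constant along each connected component of $\partial\Omega$ (at the level of $W^{1,1}$ or BV traces, which is where the low regularity $W^{2,1}$ must be handled carefully), I would conclude as follows: the function $x\mapsto \bar n(x)\cdot\nu(x)$ takes a constant value $c\in[-1,1]$ on a component; but I also have the independent constraint, coming from the \emph{other} components of the natural boundary condition (testing with tangent-to-$\partial\Omega$ variations), that $c(1-c^2)=0$ or an equivalent algebraic relation, which pins $c\in\{0,\pm1\}$. Alternatively — and this is probably the cleanest route — the scalar identity from the previous paragraph, combined with integrating the natural boundary condition against $\bar n$ itself, yields directly $(\bar n\cdot\nu)\big((\bar n\cdot\nu)^2-1\big)=0$ pointwise a.e. on $\partial\Omega$, which is exactly the claim.

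The main obstacle I anticipate is \textbf{legitimacy of the computation at the regularity of $\mcA$}: elements of $W^{1,2}\cap W^{2,1}$ need not be $C^2$, so the identity converting the boundary term to a bulk divergence, and the subsequent integration by parts producing the boundary Euler--Lagrange relation, must be justified by approximation (mollification preserving $|n|=1$ only approximately, then renormalizing) and by careful use of trace theorems for $W^{2,1}(\Omega)\hookrightarrow W^{1,1}(\partial\Omega)$ in dimensions $d=2,3$. Establishing that the surface term is even well-defined and continuous under such approximation, and that the variation $\frac{d}{dt}\mbbE[n_t]$ can be differentiated under the integral sign for the constrained family $n_t$, is where the real work lies; the algebra of null Lagrangians on $\bS^{d-1}$ is then comparatively routine.
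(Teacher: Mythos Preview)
Your proposal has a genuine gap: you are not using the right class of variations, and as a result you are chasing a conclusion (``$\bar n\cdot\nu$ is locally constant on $\partial\Omega$'') that is neither needed nor obviously obtainable from the first variation.

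The key mechanism you are missing is this. Take $\varphi\in C^\infty(\bar\Omega;\RR^d)$ with $\varphi|_{\partial\Omega}=0$ (but \emph{not} compactly supported in $\Omega$), and set $n_t=(\bar n+t\varphi)/|\bar n+t\varphi|$. Since $\varphi$ vanishes on $\partial\Omega$, the trace of $n_t$ is that of $\bar n$, so $n_t\in\mcA$ without any boundary-condition issues. First, with $\varphi\in C^\infty_c(\Omega)$ the surface term drops out and one gets the interior equation $-\Delta\bar n=|\nabla\bar n|^2\bar n$ a.e.\ (here $W^{2,1}$ is used to integrate by parts). Second, for general $\varphi$ vanishing on $\partial\Omega$, the bulk contribution $\int_\Omega K(\nabla\bar n:\nabla\varphi-|\nabla\bar n|^2\bar n\cdot\varphi)$ integrates by parts to zero by the interior equation, and what remains from the variation of the surface term is
\[
\int_{\partial\Omega}\sum_{\alpha,\beta}\partial_\alpha\varphi_\beta\,\bar n_\alpha\,(\nu_\beta-\bar n_\beta\langle\bar n,\nu\rangle)\,dS=0.
\]
Because $\varphi=0$ on $\partial\Omega$, all tangential derivatives of $\varphi$ vanish there; only the normal derivative $\partial_\nu\varphi$ survives, and it is completely arbitrary. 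The identity thus reduces to
\[
\int_{\partial\Omega}\frac{\partial\varphi_\beta}{\partial\nu}\,\langle\bar n,\nu\rangle\,(\nu_\beta-\bar n_\beta\langle\bar n,\nu\rangle)\,dS=0\qquad\text{for each }\beta,
\]
and a fundamental-lemma-type argument on $\partial\Omega$ (this is a lemma one has to prove, since $\partial_\nu\varphi$ ranges over traces of such functions, not all of $C^\infty(\partial\Omega)$) gives the pointwise relation $\langle\bar n,\nu\rangle(\nu_\beta-\bar n_\beta\langle\bar n,\nu\rangle)=0$ a.e.\ on $\partial\Omega$. From this, either $\langle\bar n,\nu\rangle=0$ or $\bar n=\nu/\langle\bar n,\nu\rangle$, and squaring yields $\langle\bar n,\nu\rangle^2=1$. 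This is a purely pointwise algebraic conclusion; no ``local constancy'' argument enters.

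Two specific problems with your outline: (i) your ``null Lagrangian'' observation is circular --- the bulk form $\partial_\beta n_\alpha\,\partial_\alpha n_\beta+n_\beta\,\partial_\alpha\partial_\beta n_\alpha$ equals $\partial_\alpha(n_\beta\partial_\beta n_\alpha)$ tautologically, since that is how it was obtained from the surface term; this does not produce any new boundary identity beyond what you started with, and in particular the $K_{13}$ term is \emph{not} determined by tangential data (that is the whole point of the problem); (ii) your plan to extract $\nabla_\tau(\bar n\cdot\nu)=0$ and then a separate algebraic relation $c(1-c^2)=0$ is unsupported --- no such two-step structure appears in the first variation, and with variations $\psi$ that are nonzero on $\partial\Omega$ you would instead pick up a coupled natural boundary condition involving $\partial_\nu\bar n$, from which the desired pointwise constraint is not visible.
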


The most interesting case is the one  when $\bar n(x)\cdot \nu(x)=0$ for almost all $x\in\partial\Omega$ and in dimension $d=3$ as this allows for a certain level of  freedom at the boundary. Then one can show that the energy $\mbbE$ reduces to:

\be\label{def:energG}
\mathbb{G}[n]:= \int_\Omega K\sum_{\alpha,\beta=1}^3 \frac{\partial n_\alpha}{\partial x_\beta}\frac{\partial n_\alpha}{\partial x_\beta} - \tK_{13}\int_{\partial \Omega} \sum_{\alpha,\beta=1}^3 \frac{ \partial \nu_\beta}{\partial x_\alpha} n_\beta n_\alpha.
\ee
This energy makes sense in the large functional space $W^{1,2}(\Omega,\SSS^2)$ and it is easily shown to be bounded from below and lower semicontinuous. Thus one  can now consider minimizing $\mathbb{G}$ over the function space of $W^{1,2}$ functions with tangential boundary conditions: 
\be\label{def:mcU}
\mcU :=\{u \in W^{1,2}(\Omega, \SSS^2): {\rm Trace}(u) \in \mathcal{T} \} 
\ee
where 
\begin{equation}\label{def:mcT}
\mathcal{T}:= \{\gamma \in H^{\frac{1}{2}}(\partial \Omega , \SSS^2): \gamma(x)\cdot \nu(x) = 0 \text{ for almost every } x \in \partial \Omega\}.
\end{equation}

A first issue to consider is weather or not this functional space is non-empty as the topological constraint might make it empty, as shown by the {\it ``Hairy Ball" Theorem}. Fortunately in our case the regularity at the boundary is weaker than continuous  and we have:

\begin{proposition}\label{prop:nonemptyfspace}
Let $\Omega$ be a bounded $C^2$ domain in $\R^3$. Then the space $\mcU$ defined in \eqref{def:mcU} is non-empty.
\end{proposition}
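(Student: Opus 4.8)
The plan is to construct an explicit element of $\mcU$, exploiting the fact that membership only requires $W^{1,2}$ regularity in the interior and $H^{1/2}$ regularity of the trace, so that the pointwise obstruction of the Hairy Ball Theorem (which forbids a \emph{continuous} nowhere-zero tangent field on $\Sphere^2$) is not in force here. The cleanest route is to build a tangent vector field on $\partial\Omega$ that is smooth away from a finite set of points (or a set of capacity zero) where it has a discontinuity of bounded type, check it lies in $H^{1/2}(\partial\Omega,\Sphere^2)$, and then extend it into $\Omega$ in a way that keeps the $W^{1,2}$ norm finite.

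Concretely, I would proceed as follows. First, since $\Omega$ is a bounded $C^2$ domain in $\R^3$, its boundary $\partial\Omega$ is a compact $C^2$ surface; by the classification of surfaces and the $C^2$ structure, near each point the Gauss map $\nu$ and a local orthonormal frame of the tangent bundle are available. Second, I would fix a point $p_0\in\partial\Omega$ and use a local chart around $p_0$ to define a unit tangent field that, written in a smooth local orthonormal frame $(e_1(x),e_2(x))$ of $T_x\partial\Omega$, has the form $u(x)=\cos\theta(x)\,e_1(x)+\sin\theta(x)\,e_2(x)$ where $\theta$ carries all the winding forced by the topology: away from $p_0$ this can be taken smooth, while near $p_0$ one lets $\theta$ wind once around, e.g. $\theta$ equal to the polar angle in the chart, which produces a single ``vortex'' at $p_0$. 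Third, I would verify that such a vortex-type unit tangent field belongs to $H^{1/2}(\partial\Omega,\Sphere^2)$: the model map $x/|x|$ on a two-dimensional disk is the standard example of a map that is \emph{not} in $W^{1,2}$ but \emph{is} in $W^{s,2}$ for every $s<1$, in particular in $H^{1/2}$; patching this local model into the globally smooth remainder via a cutoff keeps us in $H^{1/2}$ since $H^{1/2}$ is closed under multiplication by smooth cutoffs and under adding smooth functions. Fourth, having a trace $\gamma\in\mcT$, I would produce an actual $W^{1,2}(\Omega,\Sphere^2)$ extension: one standard way is to use that $H^{1/2}(\partial\Omega,\R^3)$ traces extend to $W^{1,2}(\Omega,\R^3)$, then note the extension can be arranged to stay near $\Sphere^2$ in a collar of $\partial\Omega$ and be composed with the nearest-point retraction onto $\Sphere^2$, while in the bulk one can simply homotope to a constant through $\Sphere^2$ (the bulk map need not be tangential, only $\Sphere^2$-valued). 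Alternatively, and perhaps more transparently, extend $\gamma$ radially inward along normal lines in a collar neighbourhood with a cutoff to a constant unit vector deeper inside; the radial extension of a single planar vortex into a three-dimensional collar has finite Dirichlet energy, which is the key point making this work in $d=3$ but failing the analogous naive construction in $d=2$.

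The main obstacle I anticipate is the bookkeeping at the single vortex point: one must confirm both that the local model is genuinely in $H^{1/2}$ of the two-dimensional boundary (not merely in $W^{s,2}$ for $s$ slightly below $1$ — though $1/2<1$ so this is comfortably true, the fractional Sobolev estimate on a curved surface should be done via the chart and the equivalence of $H^{1/2}$ norms under $C^2$ diffeomorphisms) and that the chosen interior extension has finite Dirichlet energy despite the boundary singularity. For the latter the cleanest argument is to avoid extending the vortex at all into the bulk: use the collar $\partial\Omega\times[0,\delta)$, on $\{t=0\}$ put $\gamma$, on $\{t\ge \delta/2\}$ put a constant, and interpolate; since the $H^{1/2}$-to-$W^{1,2}$ harmonic-type extension of $\gamma-(\text{smooth part})$ behaves like the extension of the planar vortex, whose $W^{1,2}$ norm on a three-dimensional half-ball is finite (the singular set has codimension $2$ in the source $\R^3$), the total energy is finite. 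I would therefore present the argument in two clearly separated lemmas-within-the-proof: (i) construction of $\gamma\in\mcT$ via a single vortex in a $C^2$ chart, with the $H^{1/2}$ membership; (ii) the finite-energy $\Sphere^2$-valued extension into $\Omega$. A brief remark should note that this is exactly where the dimensional restriction $d=3$ is used — the same strategy in $d=2$ would require a vortex-like trace on a one-dimensional boundary curve, which cannot be a nonvanishing tangent field unless the trace is genuinely continuous, and there the Hairy-Ball-type obstruction does bite.

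\begin{remark}
One could alternatively invoke the general density/extension theory for Sobolev maps between manifolds: maps in $W^{1,2}(\Omega,\Sphere^2)$ with $\Omega\subset\R^3$ need not be approximable by smooth maps precisely because of point singularities of nonzero degree, and it is these ``defect'' maps that supply tangential boundary data obstructed only topologically. The explicit construction above is preferred here for self-containedness.
\end{remark}
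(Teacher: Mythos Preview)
Your overall strategy—place vortex singularities on $\partial\Omega$, verify the resulting unit tangent field lies in $H^{1/2}$, then extend into $\Omega$—is exactly the paper's approach. However, two steps in your sketch contain genuine gaps.

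\textbf{The index at the singular point is not $1$.} You write that $\theta$ ``carries all the winding forced by the topology'' but then prescribe it to ``wind once around''. These are inconsistent: if a unit tangent field on $\partial\Omega$ is smooth away from a single point $p_0$, the Poincar\'e--Hopf theorem forces the index at $p_0$ to equal $\chi(\partial\Omega)$, which is $2$ for the sphere, $0$ for the torus, and $2(1-k)$ in general. You cannot choose index $1$ and then extend smoothly over the complement. The paper resolves this by placing $|\chi(\partial\Omega)|$ index-$\pm1$ vortices in disjoint charts and invoking a nontrivial extension lemma (based on the Morse index formula) to fill in a smooth unit tangent field on the rest of $\partial\Omega$. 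Your single-puncture variant can be made to work, but you must let the local model be $(\cos(n\theta),\sin(n\theta))$ with $n=\chi(\partial\Omega)$ rather than $x/|x|$; fortunately this is still in $W^{1,p}$ for $p<2$ and hence in $H^{1/2}$ by the same interpolation the paper uses.

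\textbf{The interior extension does not work as you describe.} Your claim that ``the radial extension of a single planar vortex into a three-dimensional collar has finite Dirichlet energy'' is false: extending the $2$D vortex constantly along the normal gives a line singularity with $|\nabla u|\sim r^{-1}$ from the line, and $\int r^{-2}\cdot r\,dr\,d\theta\,dt$ diverges logarithmically. Codimension $2$ is precisely the \emph{borderline} case for $W^{1,2}$, not the safe side. Your alternative—take the $\RR^3$-valued harmonic extension and then retract to $\SSS^2$—also fails: the harmonic (Poisson) extension of $x/|x|$ into the upper half-space tends to $0$ as one approaches the singular boundary point from the interior (by symmetry), so the nearest-point retraction to $\SSS^2$ is undefined there. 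The paper sidesteps all of this by invoking the Hardt--Lin extension theorem, which guarantees that any $\gamma\in H^{1/2}(\partial\Omega,\SSS^2)$ extends to $W^{1,2}(\Omega,\SSS^2)$ because $\pi_0(\SSS^2)=\pi_1(\SSS^2)=0$; this is a substantive ingredient you are missing, and without it (or a careful ad hoc substitute) the proof is incomplete.
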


Standard arguments provide the existence of a global minimizer. In general this minimizer might not be continuous at the boundary for topological reasons (think of the example of the ``Hairy Ball" theorem). It is then of interest to look into the matter of partial regularity for the global minimizers of $\mathbb{G}$ in the space $\mcU$ .  This is related to the works of  R. Hardt and F. Lin in \cite{HardtLin89} and later  that of \cite{Scheven06} who considered partially constrained boundary conditions, though only for the Dirichlet functionals. We look into this through a method combining the two approaches in the works mentioned above and taking into account the effect
of the surface energy. We can thus show:

\begin{theorem}\label{thm:preg}
Let $\Omega\subset\R^3$ be a $C^2$ domain. Then a global energy minimizer of the energy $\mathbb{G}$ (defined  in \eqref{def:energG}) in the space \eqref{def:mcU} is continuous on $\bar\Omega\setminus Z$ where $Z$ is a set of one-dimensional Hausdorff measure equal to zero.
 \end{theorem}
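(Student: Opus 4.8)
The plan is to adapt the partial-regularity machinery of Hardt--Lin \cite{HardtLin89} for minimizing harmonic maps with partially free boundary conditions, together with the refinements of Scheven \cite{Scheven06} to handle the constrained boundary trace, while absorbing the surface term of $\mathbb{G}$ as a controllable lower-order perturbation. The key observation making this feasible is that the surface integral in \eqref{def:energG} depends on the trace of $n$ quadratically and with a smooth ($C^1$) coefficient $\partial\nu_\beta/\partial x_\alpha$; it therefore contributes neither to the Euler--Lagrange equation in the interior nor to the leading-order scaling behaviour of the energy on small balls. Concretely, first I would establish the \emph{interior} partial regularity: away from $\partial\Omega$, a minimizer of $\mathbb{G}$ is exactly a minimizing harmonic map into $\SSS^2$, so Schoen--Uhlenbeck theory gives that the interior singular set has Hausdorff dimension $\le d-3 = 0$, hence is discrete, and in particular has zero one-dimensional Hausdorff measure; continuity holds on the complement.

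Next comes the \emph{boundary} analysis, which is the substantive part. For $x_0\in\partial\Omega$ and small $r$, I would use the monotonicity formula for the scaled energy $r^{2-d}\int_{B_r(x_0)\cap\Omega}|\nabla n|^2$, proving that it is \emph{almost monotone}: differentiating in $r$ and using the minimality of $n$ against suitable comparison maps (radial extensions adapted to the curved boundary and to the tangency constraint), the surface term produces only an $O(r)$ error because $|\int_{\partial\Omega\cap B_r}(\cdots)|\lesssim r^{d-1}$ while the main term scales like $r^{d-2}$. This yields existence of the density $\Theta(x_0)=\lim_{r\to 0} r^{2-d}\int_{B_r(x_0)\cap\Omega}|\nabla n|^2$ and upper semicontinuity of $\Theta$. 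I would then prove an $\eps$-regularity theorem at the boundary: there is $\eps_0>0$ such that if $\Theta(x_0)<\eps_0$ (equivalently, the scaled energy on some small $B_r(x_0)$ is $<\eps_0$), then $n$ is continuous in a neighbourhood of $x_0$ in $\bar\Omega$. The proof flattens the boundary via a $C^2$ chart, reflects if needed, and runs the standard blow-up/reverse-Poincar\'e argument; the tangency constraint $\gamma\cdot\nu=0$ is preserved under blow-up (limiting to a constraint against a fixed unit vector, i.e. maps into a great circle's worth of freedom), and the perturbation term, being $O(r)$ in the rescaled problem, vanishes in the blow-up limit so that tangent maps are genuine boundary-minimizing harmonic maps, which by the constrained-boundary results are constant under the smallness hypothesis.

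Given interior and boundary $\eps$-regularity, the singular set $Z$ is by definition $\{x\in\bar\Omega:\Theta(x)\ge\eps_0\}$. A standard covering/dimension-reduction argument (Federer reduction, using that blow-ups are homogeneous minimizing maps and iterating the dimension bound) then shows $\dim_{\mathcal H} Z \le d-3$. Since $d=3$ this gives $\dim_{\mathcal H} Z\le 0$, and in fact a slightly sharper accounting — the boundary singular set sits in the $(d-1)$-manifold $\partial\Omega$ and dimension reduction there costs an extra dimension — yields $\mathcal H^1(Z)=0$, which is the claimed statement; $n$ is continuous on $\bar\Omega\setminus Z$. The main obstacle I anticipate is the boundary $\eps$-regularity step: one must verify that the curved, only-$C^2$ boundary together with the pointwise tangency constraint still admits good comparison maps in the monotonicity and blow-up arguments, and that the constrained tangent maps are classified finely enough (their regularity near the flattened boundary) to run dimension reduction; this is where the combination of the Hardt--Lin reflection technique and Scheven's treatment of partial constraints must be carried out carefully, with the surface term checked at each stage to be a genuine lower-order contribution.
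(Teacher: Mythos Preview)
Your proposal is correct in outline and would likely succeed, but it follows a genuinely different technical route from the paper's proof.

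Both you and the paper agree on the interior step: away from $\partial\Omega$ the surface term is invisible to local competitors, so a minimizer of $\mathbb{G}$ is a minimizing harmonic map into $\SSS^2$ and Schoen--Uhlenbeck applies directly.

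At the boundary the approaches diverge. You propose the monotonicity/tangent-map route: establish almost-monotonicity of $r^{-1}\int_{B_r\cap\Omega}|\nabla n|^2$ (with the surface term contributing an $O(r)$ error), extract tangent maps, prove $\eps$-regularity by blow-up to a flat half-space problem with constraint against a fixed vector, and finish with Federer dimension reduction. The paper instead never proves a monotonicity formula and never classifies tangent maps. Its engine is a \emph{Hybrid inequality} in the style of Hardt--Kinderlehrer--Lin: for small rescaled energy one has
\[
\tfrac{1}{2}^{-1}\!\!\int_{\Omega_\varphi\cap C(0,1/4)}\!\!|\nabla u|^2 \;\le\; \lambda\Big(\|\varphi\|_{C^2}^2+\int_{\Omega_\varphi}|\nabla u|^2\Big)+\lambda^{-q}c_2\Big(\mathrm{Lip}(\varphi)^2+\int|u-\dbar u|^2\Big)+\ldots,
\]
and then an Energy Improvement lemma is proved by contradiction: a normalized blow-up sequence converges to a harmonic \emph{function} (not map) satisfying mixed Neumann/Dirichlet conditions on the flat boundary, whose linear decay estimates feed back through the Hybrid inequality to force geometric energy decay. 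The surface term is dealt with at each comparison step by the crude bound $|\partial\nu/\partial x|\le C\|\varphi\|_{C^2}$.

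The paper's key device, absent from your sketch, is an explicit pointwise rotation $Q:\Omega_\varphi\to SO(3)$ with $|Q-\mathrm{Id}|\lesssim\mathrm{Lip}(\varphi)$ and $|\nabla Q|\lesssim\|\varphi\|_{C^2}$, which converts the \emph{variable} tangency constraint $n\cdot\nu(x)=0$ into the \emph{fixed} equatorial constraint $(Qn)_3=0$. This allows direct invocation of the Hardt--Lin extension lemma for equatorial boundary data and is what makes the Hybrid inequality go through cleanly. Your approach would instead absorb the variability of $\nu$ in the blow-up limit (where $\nu\to\mathrm{const}$), which is equally legitimate but shifts the difficulty to classifying the constrained tangent maps well enough to run dimension reduction---precisely the obstacle you flag at the end. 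The paper's approach trades that classification problem for explicit comparison-map constructions; yours is more geometric-measure-theoretic and would give a slightly sharper conclusion (dimension bound rather than just $\mathcal H^1=0$) if the tangent-map step is carried out in full.
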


The paper is organized as follows: in the next section  physical background is provided, to be followed in Section~\ref{sect:constraints} by the example of Barbero and Oldano showing the unboundedness of the energy functional, and then the proof of Theorem~\ref{thm:bdrydet}. In the last part, Section~\ref{sect:preg}, we prove Proposition~\ref{prop:nonemptyfspace} and then  Theorem~\ref{thm:preg}. The appendices contain a number of technical lemmas and the list of notations.

\subsection{Physical motivation}
\label{subsec:physmot}

Nematic liquid crystals are the simplest yet the most used type of liquid crystals, with wide-ranging applications, particularly in displays. The simplest and most comprehensive model used for describing the stationary patterns is related to the Oseen-Frank energy (see for instance \cite{Virgabook}) :

\bea\label{OFenerg}
\mbbE_{OF} [n]=\int_\Omega K_1|\nabla\cdot n|^2&+K_2|n\cdot(\nabla\times n)|^2+K_3|n\times(\nabla\times n)|^2\,dx\\
&+\int_\Omega (K_2+K_{24})(\textrm{tr}(\nabla n)^2-(\nabla\cdot n)^2)+K_{13}\nabla\cdot \left((\nabla\cdot n)n  \right)\,dx
\eea where the vector $n$ is unit-length. 
Using the identity, valid for $n\in C^1(\Omega;\mathbb{S}^2)$:

$$\textrm{tr}(\nabla n)^2+|n\cdot(\nabla\times n)|^2+|n\times (\nabla\times n)|^2=|\nabla n|^2
$$ we have that for ``equal elastic constants" $K_1=K_2=K_3$ the Oseen-Frank energy reduces to 

$$\tilde\mbbE_{OF}[n]=\int_\Omega K_2|\nabla n|^2+K_{24}(\textrm{tr}(\nabla n)^2-(\nabla\cdot n)^2)+K_{13}\nabla\cdot \left((\nabla\cdot n)n  \right)\,dx
$$

The $K_{24}$ term is a null-Lagrangian as we have (see for instance \cite{Virgabook},\cite{alouges1997minimizing}):

\bea
\int_\Omega K_{24}(\textrm{tr}(\nabla n)^2-(\nabla\cdot n)^2)\,dx&=\int_\Omega \nabla\cdot\left((n\cdot\nabla)n -(\nabla\cdot n)n\right)\,dx\non\\
=\int_{\partial\Omega} \left[ (n\cdot\nabla) n-(\nabla\cdot n) n\right]\cdot\nu\,d\sigma&=\int_{\partial\Omega} D_t n: n\otimes \nu- \textrm{tr}(D_t n) n\cdot \nu\,d\sigma\non
\eea where $D_t n:=\nabla n-\nu\otimes \frac{\partial n}{\partial \nu}$ is a differential operator that involves only tangential derivatives, thus its value depends only on the boundary conditions.

The $K_{13}$ term is different as it can be expressed as a surface integral, but the surface integral {\it does not depend only on tangential derivatives}. However, we can remove the tangential contribution to the $K_{13}$ term as follows:

\bea
 K_{24}(\textrm{tr}(\nabla n)^2-(\nabla\cdot n)^2)+K_{13}\nabla\cdot \left((\nabla\cdot n)n  \right)&= (K_{24}-K_{13})(\textrm{tr}(\nabla n)^2-(\nabla\cdot n)^2)\non\\
&+ K_{13}(\textrm{tr}(\nabla n)^2-(\nabla\cdot n)^2)+K_{13}\nabla\cdot \left((\nabla\cdot n)n  \right)\,\non\\
&=\tilde K_{24}(\textrm{tr}(\nabla n)^2-(\nabla\cdot n)^2)+K_{13}\nabla\cdot\left((n\cdot\nabla) n\right)\,\non\\
\eea thus the energy $\mbbE$ that we consider in \eqref{def:Energ} contains  the essential terms capturing the difficulty of the physical $K_{13}$ problem namely that the energy is unbounded from below. The results we obtain in the next section are relevant to the full physical Oseen-Frank energy, with suitable adaptations.

\section{Constraints on the boundary conditions}
\label{sect:constraints}
    \subsection{The unboundedness of the energy functional}
\label{subsec:unbddenerg}

We show now, by following the example provided in E.Virga's book \cite{Virgabook} and inspired by \cite{OldanoBarbero85} that the energy $\mbbE$ can become unbounded from below, so no global minimizers can exist. Let $\Omega$ be the domain in $\RR^3$ given by $\Omega :=\{(x,y,z): x,y \in (0,l), z \in (-d,+d)\}$ and consider the functions $n_\varepsilon(x,y,z):= (\cos(\rho_\varepsilon(z)), 0 , \sin(\rho_\varepsilon(z))$ with $\rho_\varepsilon(z)$ given by:

\begin{equation*}
\rho_\varepsilon(z) : = \begin{cases}
\rho_0 + \varepsilon - \frac{1 }{\varepsilon^3}(z - d + \varepsilon^2)^2 &\text{ if } z \in [d-\varepsilon^2, d], \\
\rho_0 + \varepsilon &\text{ if }z \in(-d + \varepsilon^2, d-\varepsilon^2), \\
\rho_0 + \varepsilon - \frac{1 }{\varepsilon^3}(z + d - \varepsilon^2)^2 &\text{ if } z \in [-d, -d+\varepsilon^2].
\end{cases}
\end{equation*}A calculation gives that $n_\varepsilon\in \mcA$ and 
\begin{equation*}
\mbbE[n_\varepsilon] : = 4 l^2 \left( \frac{K}{3}-K_{13} \frac{\sin(2\rho_0)}{2\varepsilon} \right).
\end{equation*}
Therefore if we choose $\rho_0$ such that $\sin(\rho_0) >0 $ then we get
\begin{equation*}
\mbbE[n_\varepsilon] \rightarrow -\infty
\end{equation*}
proving that $\mbbE$ is unbounded from below in the function space $\mcA$ (for this specific domain $\Omega$). 

\subsection{Critical points}

Theorem \ref{thm:bdrydet} is proved by studying the first variations of the energy $\mbbE$ in $\mcA$. First we will prove some lemmas; note that in Lemma \ref{lem:critical point equation} it is important that we are working in $W^{2,1}(\Omega,\SSS^2)$.

\begin{lemma}\label{lem:critical point equation}
Let $\bar n$ be a critical point of the energy $\mbbE$ in the function space $\mcA$. Then $\bar n$ satisfies the equation 
\begin{equation*}
\sum_{\alpha,\beta,\gamma =1}^d\int_{\partial \Omega} \left(\varphi_{\beta ,\alpha} \bar n_\alpha - \varphi_{\gamma, \alpha}\bar n_\gamma \bar n_\alpha \bar n_\beta \right) \nu_\beta dS = 0
\end{equation*}
for all $\varphi \in C^\infty (\bar{\Omega} , \RR^3)$ such that $\varphi = 0$ on $\partial \Omega$.
\end{lemma}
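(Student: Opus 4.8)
The plan is to compute the first variation of $\mbbE$ at $\bar n$ along admissible directions. Since $\bar n$ takes values in $\SSS^{d-1}$, a perturbation must be projected back onto the sphere; the standard device is to use variations of the form $n_t := \frac{\bar n + t\varphi}{|\bar n + t\varphi|}$ for $\varphi \in C^\infty(\bar\Omega,\RR^3)$, or equivalently to restrict attention to $\varphi$ with $\varphi \cdot \bar n = 0$ and then handle the general case by decomposing $\varphi = (\varphi \cdot \bar n)\bar n + (\varphi - (\varphi\cdot\bar n)\bar n)$. The tangential-component part contributes nothing to first order because $|n_t| \equiv 1$, which is exactly the source of the projection terms $-\varphi_{\gamma,\alpha}\bar n_\gamma \bar n_\alpha \bar n_\beta$ appearing in the statement. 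First I would write $\mbbE[n] = \frac{K}{2}\int_\Omega |\nabla n|^2 + \tK_{13}\int_{\partial\Omega}((n\cdot\nabla)n)\cdot\nu\,d\sigma$ and differentiate each piece in $t$ at $t=0$.

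The Dirichlet part is routine: $\frac{d}{dt}\big|_{t=0}\frac{K}{2}\int_\Omega|\nabla n_t|^2$ gives the usual harmonic-map-type expression, an interior integral involving $\nabla\bar n$ and $\nabla\varphi$ together with the projection correction. The surface term is the delicate one. Here is where the hypothesis $\bar n \in W^{2,1}(\Omega,\SSS^2)$ is essential: the boundary integrand $((n\cdot\nabla)n)\cdot\nu = \sum_{\alpha,\beta} n_\beta \frac{\partial n_\alpha}{\partial x_\beta}\nu_\alpha$ involves first derivatives of $n$ on $\partial\Omega$, so to differentiate it and to integrate by parts sensibly one needs $\nabla \bar n$ to have a well-defined trace, which follows from $W^{2,1}$ control (the trace theorem $W^{2,1}(\Omega) \hookrightarrow W^{1,1}(\partial\Omega)$, and $\nabla\bar n \in W^{1,1}$). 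I would rewrite the surface term, using the divergence theorem backwards, as the interior integral $\tK_{13}\int_\Omega \sum_{\alpha,\beta}\big(\frac{\partial n_\alpha}{\partial x_\beta}\frac{\partial n_\beta}{\partial x_\alpha} + n_\beta\frac{\partial^2 n_\alpha}{\partial x_\alpha\partial x_\beta}\big)\,dx$ as already displayed in the introduction, differentiate \emph{that} in $t$ (now everything is an interior integral of quantities controlled in $L^1$ with $\varphi$ smooth), and then integrate by parts to move derivatives off $\varphi$. Because $\varphi = 0$ on $\partial\Omega$, the interior contributions that come from the $\tK_{13}$-part must organize themselves: the genuinely second-order/interior pieces should cancel against each other (the $K_{13}$-integrand is, up to the sphere constraint, close to a null Lagrangian), leaving only a boundary integral supported where $\nabla\varphi$ has a nonzero normal component — i.e., terms in $\varphi_{\beta,\alpha}\nu_\beta$. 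Collecting these, and incorporating the projection correction $\varphi \mapsto \varphi - (\varphi\cdot\bar n)\bar n$ (which replaces $\varphi_{\beta,\alpha}$ by $\varphi_{\beta,\alpha} - \partial_\alpha(\bar n\cdot\varphi)\,\bar n_\beta - \ldots$, and after using $\varphi|_{\partial\Omega}=0$ so that tangential derivatives of $\bar n\cdot\varphi$ along $\partial\Omega$ vanish, only the $\varphi_{\gamma,\alpha}\bar n_\gamma\bar n_\alpha\bar n_\beta$ combination survives), gives precisely the asserted identity
$$\sum_{\alpha,\beta,\gamma}\int_{\partial\Omega}\big(\varphi_{\beta,\alpha}\bar n_\alpha - \varphi_{\gamma,\alpha}\bar n_\gamma\bar n_\alpha\bar n_\beta\big)\nu_\beta\,dS = 0.$$

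The main obstacle, and the step I would spend the most care on, is the justification of the integration by parts and the differentiation under the integral sign at the low regularity $W^{1,2}\cap W^{2,1}$: one cannot simply manipulate $\int_\Omega n_\beta \partial^2_{\alpha\beta}n_\alpha$ as if $n$ were smooth. The clean route is an approximation argument — mollify $\bar n$ to get $\bar n_\delta \in C^\infty$, noting $\bar n_\delta \to \bar n$ in $W^{1,2}\cap W^{2,1}$ and (crucially) $\nabla\bar n_\delta \to \nabla\bar n$ in $L^1(\partial\Omega)$ by the trace theorem — carry out all computations for $\bar n_\delta$ where everything is classical, and pass to the limit. The limit in the Dirichlet term and in the surviving boundary term is straightforward; the subtlety is checking that the interior $\tK_{13}$-contributions, each only in $L^1$, combine before passing to the limit so that no uncontrolled product of two weakly-converging factors is needed. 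One should also check that the non-unit-length competitor $\bar n + t\varphi$ stays away from $0$ for small $t$ (immediate since $|\bar n| = 1$ and $\varphi$ is bounded) so that $n_t$ is well-defined and the $t$-derivative exists. Everything else is bookkeeping with the chain rule and the constraint $|\bar n|^2 = 1$, $\bar n\cdot\partial_\alpha\bar n = 0$.
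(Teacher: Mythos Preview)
Your overall strategy---compute the first variation along $n_t = (\bar n + t\varphi)/|\bar n + t\varphi|$ and track the projection correction---is correct, and you correctly identify that the projection produces the $\varphi_{\gamma,\alpha}\bar n_\gamma\bar n_\alpha\bar n_\beta$ term. But there is a real gap: after computing the first variation you will have
\[
K\int_\Omega\big(\nabla\bar n\cdot\nabla\varphi - |\nabla\bar n|^2\,\bar n\cdot\varphi\big)\,dx \;+\; \tK_{13}\int_{\partial\Omega}\big(\varphi_{\beta,\alpha}\bar n_\alpha - \varphi_{\gamma,\alpha}\bar n_\gamma\bar n_\alpha\bar n_\beta\big)\nu_\beta\,dS \;=\; 0,
\]
and you never explain why the first (Dirichlet) integral vanishes. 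You call it ``routine'' and then turn to the surface term, expecting interior pieces from the $\tK_{13}$-part to ``cancel against each other''. They do---the $\tK_{13}$ integrand is a pure divergence, so its variation is purely a boundary term---but that leaves the Dirichlet bulk integral standing, and nothing in your outline eliminates it.

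The paper closes this gap with a clean two-step argument you should adopt. First restrict to $\psi\in C^\infty_0(\Omega)$: the surface integral contributes nothing, so criticality gives $K\int_\Omega\nabla\bar n\cdot\nabla\psi - |\nabla\bar n|^2\,\bar n\cdot\psi = 0$; one integration by parts (this is the \emph{only} place $\bar n\in W^{2,1}$ is used) yields the pointwise equation $\Delta\bar n + |\nabla\bar n|^2\bar n = 0$ a.e. Second, for general $\varphi$ vanishing on $\partial\Omega$, integrate the Dirichlet term by parts (no boundary term since $\varphi|_{\partial\Omega}=0$) to get $K\int_\Omega(-\Delta\bar n - |\nabla\bar n|^2\bar n)\cdot\varphi$, which vanishes by step one, leaving exactly the asserted boundary identity. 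This is much shorter than your route: no need to rewrite the surface term as an interior integral, no second-order differentiation, and no mollification---the only analytic input beyond calculus is a single integration by parts justified directly by $W^{2,1}$.
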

\begin{proof}
Let $\bar{n}$ be a critical point of $\mbbE$ in $\mcA$. First, let $\psi \in C^\infty_0(\Omega, \RR^3)$ and set $\bar{n}^{\eps}:= \frac{\bar{n} +\eps \psi}{\abs{\bar{n} + \eps \psi}}$, then a standard calculation gives
\begin{align*}
\frac{d }{d\eps} \mbbE[\bar{n}^\eps] \Big|_{\eps = 0} = K \int_\Omega \nabla \bar{n} \cdot \nabla \psi - \abs{\nabla \bar{n}}^2 n \cdot \psi dx = 0. 
\end{align*}
We now use the fact that $\bar{n} \in W^{2,1}(\Omega, \SSS^2)$ to perform an integration by parts, giving
\begin{equation*}
\frac{d }{d\eps} \mbbE[\bar{n}^\eps] \Big|_{\eps = 0} = K \int_\Omega \left( - \Delta \bar{n}  - \abs{\nabla \bar{n}}^2 \bar{n} \right)\cdot \psi dx = 0.
\end{equation*}
As $\psi$ can be chosen arbitrarily, we get $\Delta \bar{n}  + \abs{\nabla \bar{n}}^2 \bar{n} = 0$ almost everywhere. 

Next, let $\varphi \in C^\infty(\bar{\Omega},\RR^3)$ such that $\varphi=0$ on $\partial\Omega$ and set $\bar{n}^\eps = \frac{\bar{n} +\eps \varphi}{\abs{\bar{n} +\eps \varphi}}$, then a calculation gives
\begin{align*}
\frac{d}{d\eps} \mbbE[\bar{n}^\eps] \Big|_{\eps=0} &= K \int_\Omega \nabla \bar{n} \cdot \nabla \varphi - \abs{\nabla \bar{n}}^2 \bar{n} \cdot \varphi dx + K_{13} \sum_{\alpha,\beta,\gamma =1}^d\int_{\partial \Omega} \left(\varphi_{\beta ,\alpha}  \bar{n}_\alpha - \varphi_{\gamma, \alpha} \bar{n}_\gamma  \bar{n}_\alpha  \bar{n}_\beta \right) \nu_\beta dS \\
&= K \int_\Omega \left(- \Delta \bar{n}  - \abs{\nabla \bar{n}}^2 \bar{n} \right)\cdot \varphi dx + K_{13} \sum_{\alpha,\beta,\gamma =1}^d \int_{\partial \Omega} \left(\varphi_{\beta ,\alpha}  \bar{n}_\alpha - \varphi_{\gamma, \alpha} \bar{n}_\gamma  \bar{n}_\alpha  \bar{n}_\beta \right) \nu_\beta dS \\
&= K_{13} \sum_{\alpha,\beta,\gamma =1}^d \int_{\partial \Omega} \left(\varphi_{\beta ,\alpha}  \bar{n}_\alpha - \varphi_{\gamma, \alpha} \bar{n}_\gamma  \bar{n}_\alpha  \bar{n}_\beta \right) \nu_\beta dS,
\end{align*}
which proves the lemma.
\end{proof}

We now prove an analogue of the fundamental lemma of the Calculus of Variations.

\begin{lemma}\label{lemma:g(x_0) is 0 2d+3d}
Let $d \in \{2,3\}$,  $\Omega \subset \RR^d$ be a $C^1$ domain and $g\in L^\infty(\partial \Omega,\RR)$. Suppose that 
\begin{equation}
	\int_{\partial \Omega} \frac{\partial \varphi}{\partial \nu}g(x) dS(x)= 0
\end{equation}
for all $\varphi \in C^{\infty}(\bar{\Omega},\RR)$ such that $\varphi |_{\partial \Omega} = 0$, where $\nu(x)$ is the unit norm to $\partial \Omega$ at $x$. Then 
$$g(x)=0$$ for almost all points $x\in\partial\Omega$.
\end{lemma}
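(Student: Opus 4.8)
The plan is to reduce the statement to a genuinely local computation near an arbitrary point of $\partial\Omega$, flatten the boundary, and then insert a carefully chosen family of test functions whose normal derivative on the boundary concentrates, so that the hypothesis forces the average of $g$ over small boundary balls to vanish — and then invoke the Lebesgue differentiation theorem.

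First I would fix a point $x_0\in\partial\Omega$ and, using that $\Omega$ is $C^1$, choose local coordinates in which $\partial\Omega$ near $x_0$ is the graph of a $C^1$ function, or better (after a bi-Lipschitz straightening) is locally the hyperplane $\{x_d=0\}$ with $\Omega$ locally $\{x_d>0\}$ and $\nu=-e_d$; one must keep track of the (continuous, nonvanishing) Jacobian factor $J$ relating $dS$ in the two charts, but this is harmless since it does not affect whether $g$ vanishes a.e. Pick a cutoff $\chi\in C_0^\infty$ supported in this chart with $\chi\equiv 1$ near $x_0$. Now the key construction: for a parameter $h>0$ set $\varphi_h(x',x_d):=\chi(x',x_d)\,x_d\,\eta_h(x')$, where $x'$ are the tangential coordinates and $\eta_h$ is a smooth bump supported in the ball $B_h(x_0')$ in the tangential variable. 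Then $\varphi_h\in C^\infty(\bar\Omega)$ and $\varphi_h|_{\partial\Omega}=0$ because of the explicit factor $x_d$, while $\partial\varphi_h/\partial\nu = -\partial\varphi_h/\partial x_d$ restricted to $x_d=0$ equals $-\chi(x',0)\eta_h(x') = -\eta_h(x')$ near $x_0$. Substituting into the hypothesis gives $\int \eta_h(x')\,g(x')\,J(x')\,dx' = 0$ for every such $\eta_h$, i.e. the hypothesis localizes to the standard fundamental lemma on a piece of hyperplane against the measure $g\,J\,dx'$.

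From there, standard choices of $\eta_h$ (nonnegative bumps approximating normalized indicators of $B_h(x_0')$) show $\fint_{B_h(x_0')} g\,J\,dx' \to 0$ as $h\to 0$, and since $gJ\in L^\infty\subset L^1_{\mathrm{loc}}$, the Lebesgue differentiation theorem gives $g(x_0')J(x_0')=0$ at every Lebesgue point, hence $g=0$ a.e. in the chart (as $J\neq 0$). Covering $\partial\Omega$ by finitely many such charts (compactness of $\partial\Omega$, which I will assume, or at least that $\partial\Omega$ is second countable) finishes the proof.

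The main obstacle is not conceptual but the bookkeeping at the boundary: one must verify that $\varphi_h$ genuinely extends to an element of $C^\infty(\bar\Omega)$ vanishing on \emph{all} of $\partial\Omega$ (the cutoff $\chi$ handles the part of the boundary outside the chart, where $\varphi_h\equiv 0$, and the factor $x_d$ handles the part inside), and that with only $C^1$ regularity of $\partial\Omega$ the straightening map and the induced surface-measure density $J$ are merely continuous and bounded away from $0$ — which is exactly enough for the Lebesgue-point argument, but means one cannot ask for more than $g\in L^\infty$ here. One small point to be careful about: since the chart map is only $C^1$, the pulled-back test function is a priori only $C^1$ up to the boundary in the original coordinates; this is circumvented by doing the construction intrinsically in the original coordinates, writing $\varphi_h = (\text{signed distance to }\partial\Omega)\cdot(\text{tangential bump})\cdot\chi$, which is smooth in $\bar\Omega$ near $x_0$ because the signed distance function is $C^1$ near a $C^1$ boundary — if even this is delicate, one replaces the signed distance by $-x_d\circ(\text{chart})$ composed back, accepting a $C^1$ rather than $C^\infty$ test function and noting Lemma~\ref{lem:critical point equation}'s test-function class can be relaxed to $C^1$ by density. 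I would present the cleanest version: flatten, test with $\varphi_h(y',y_d)=\eta_h(y')\,y_d\,\chi(y)$ in the flattened coordinates, and transport the conclusion back.
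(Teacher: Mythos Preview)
Your approach is essentially the same as the paper's: both localize via a boundary chart, build test functions of the form (normal coordinate)$\times$(tangential bump)$\times$(cutoff) so that the normal derivative on $\partial\Omega$ reduces to the bump, and then invoke Lebesgue differentiation to conclude $g=0$ at every Lebesgue point. Your explicit flagging of the $C^1$-versus-$C^\infty$ test-function issue (and the density fix) is in fact more careful than the paper, which tacitly works in coordinates $(r,s,t)$ built from $\nu$ and so faces the same obstruction without comment.
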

\begin{proof}
We prove for the case $d=3$, the case $d=2$ is a simpler version. Let $x_0 \in \partial \Omega$ be an arbitrary Lebesgue point of $g$ and let $\psi:(-\varepsilon , \varepsilon)^2 \rightarrow \partial \Omega$ be a coordinate patch such that $\psi(0,0) = x_0$.
If we choose $\varepsilon >0$ sufficiently small then the map 
\begin{align*}
H:[0, \varepsilon)\times(-\varepsilon , \varepsilon)\times(-\varepsilon,\varepsilon)  &\rightarrow \Omega \\
(r,s,t) &\rightarrow [\psi(s,t) - r\nu(s,t)]
\end{align*}
(where $\nu(s,t) = \frac{\psi_s \times \psi_t}{\|\psi_s \times \psi_t\|}$ is the unit normal to $\partial \Omega$ at $\psi(s,t)$) provides a $C^1$ homeomorphism from $(0, \varepsilon)\times(-\varepsilon , \varepsilon)\times(-\varepsilon,\varepsilon)$ onto a relative neighbourhood $U:= H([0, \varepsilon)\times(-\varepsilon , \varepsilon)\times(-\varepsilon,\varepsilon))$ of $x_0$.
For $\delta >0$ small define the maps
\begin{equation*}
\varphi_\delta(x) = \begin{cases} 0 &\text{ if }x \notin U \\
\left( \frac{\varepsilon}{2 \delta}\right)^2 \gamma\left(r,\frac{s \varepsilon}{2 \delta}, \frac{t \varepsilon}{2 \delta}\right) &\text{ if } x \in U,\text{ where } x =  \psi(s,t)-r\nu(s,t). 
\end{cases}
\end{equation*}
where
$$\gamma:[0, \varepsilon)\times(-\varepsilon , \varepsilon)\times(-\varepsilon,\varepsilon)\rightarrow \RR$$
is smooth and $0$ if $(r,s,t) \notin  [0, \frac{\varepsilon}{2}) \times \left(-\frac{\varepsilon}{2},\frac{\varepsilon}{2}\right) \times \left(-\frac{\varepsilon}{2},\frac{\varepsilon}{2}\right)$.
Then we have
\begin{equation*}
	\int_{\partial \Omega}\frac{\partial \varphi_\delta}{\partial \nu}g(x)dx =-\int_{-\delta}^{\delta}\int_{-\delta}^{\delta}\left(\frac{\varepsilon}{2\delta}\right)^2 \frac{\partial \gamma \left(r,\frac{s\varepsilon}{2\delta},\frac{t\varepsilon}{2\delta}\right) }{\partial r} g(\psi(s,t))\|\psi_s \times \psi_t\|\,dsdt = 0.
\end{equation*}
Using the change of variables $\sigma =\frac{s\varepsilon}{2\delta}$ and $\theta = \frac{t\varepsilon}{2\delta} $, we get
\begin{equation}\label{eq:change of variable}
\int_{-\frac{\varepsilon}{2}}^{\frac{\varepsilon}{2}}\int_{-\frac{\varepsilon}{2}}^{\frac{\varepsilon}{2}}\frac{\partial \gamma \left(r,\sigma,\theta\right) }{\partial r} g\left(\psi\left(\frac{2 \delta \sigma}{\varepsilon},\frac{2 \delta \theta}{\varepsilon}\right)\right)\|\psi_s\left(\dfrac{2 \delta \sigma}{\varepsilon},\dfrac{2 \delta \theta}{\varepsilon}\right) \times \psi_t\left(\frac{2 \delta \sigma}{\varepsilon},\dfrac{2 \delta \theta}{\varepsilon}\right)\|d\sigma d\theta = 0.
\end{equation}
On the other hand we have:

\begin{align*}
&\Bigg| \int_{-\frac{\eps}{2}}^{\frac{\varepsilon}{2}} \int_{-\frac{\eps}{2}}^{\frac{\varepsilon}{2}} \frac{\partial \gamma}{\partial r}(0,s,t) \left( g\left(\psi\left( \dfrac{2 \delta s }{\varepsilon},\dfrac{2 \delta t }{\varepsilon}\right) \right) \norm{ \psi_s \times \psi_t \left( \dfrac{2 \delta s }{\varepsilon},\dfrac{2 \delta t }{\varepsilon}\right)}  -   g\left(\psi\left( 0,0 \right) \right) \norm{ \psi_s \times \psi_t \left( 0,0\right)}   \right) \,ds dt \Bigg| \\
 &\leq \Bigg| \int_{-\frac{\eps}{2}}^{\frac{\varepsilon}{2}} \int_{-\frac{\eps}{2}}^{\frac{\varepsilon}{2}} \frac{\partial \gamma}{\partial r}(0,s,t) \norm{ \psi_s \times \psi_t \left( \dfrac{2 \delta s }{\varepsilon},\dfrac{2 \delta t }{\varepsilon}\right)} \left( g\left(\psi\left( \dfrac{2 \delta s }{\varepsilon},\dfrac{2 \delta t }{\varepsilon}\right) \right)  - g\left(\psi\left( 0,0 \right) \right) \right)\, dtds\Bigg| \\
 &\quad + \Bigg| \int_{-\frac{\eps}{2}}^{\frac{\varepsilon}{2}} \int_{-\frac{\eps}{2}}^{\frac{\varepsilon}{2}} \frac{\partial \gamma}{\partial r}(0,s,t) g\left(\psi\left( 0,0\right)\right) \left(   \norm{ \psi_s \times \psi_t \left( \dfrac{2 \delta s }{\varepsilon},\dfrac{2 \delta t }{\varepsilon}\right)}  -    \norm{ \psi_s \times \psi_t \left( 0,0\right)}   \right)\,dtds\Bigg| \\
 &:= I + II   
\end{align*}
Using Cauchy- Schwartz inequality, a change of variables and the fact that $x_0 = \psi(0,0)$ is a Lebesgue point, we have
\begin{align*}
I &\leq \left( \int_{-\frac{\eps}{2}}^{\frac{\varepsilon}{2}} \int_{-\frac{\eps}{2}}^{\frac{\varepsilon}{2}} \left( \frac{\partial \gamma}{\partial r}(0,s,t) \right)^2 \norm{ \psi_s \times \psi_t \left( \dfrac{2 \delta s }{\varepsilon},\dfrac{2 \delta t }{\varepsilon}\right)}\, dtds \right)^{\frac{1}{2}} \\
&\times \left( \int_{-\frac{\eps}{2}}^{\frac{\varepsilon}{2}} \int_{-\frac{\eps}{2}}^{\frac{\varepsilon}{2}} \left( g\left(\psi\left( \dfrac{2 \delta s }{\varepsilon},\dfrac{2 \delta t }{\varepsilon}\right) \right)  - g\left(\psi\left( 0,0 \right) \right) \right)^2 \norm{ \psi_s \times \psi_t \left( \dfrac{2 \delta s }{\varepsilon},\dfrac{2 \delta t }{\varepsilon}\right)}\, dtds \right)^{\frac{1}{2}} \\
&= \left( \int_{-\frac{\eps}{2}}^{\frac{\varepsilon}{2}} \int_{-\frac{\eps}{2}}^{\frac{\varepsilon}{2}} \left( \frac{\partial \gamma}{\partial r}(0,s,t) \right)^2 \norm{ \psi_s \times \psi_t \left( \dfrac{2 \delta s }{\varepsilon},\dfrac{2 \delta t }{\varepsilon}\right)}\, dtds \right)^{\frac{1}{2}} \\
&\times \left(\frac{\varepsilon^2}{4 \delta^2} \int_{-\delta}^{\delta} \int_{-\delta}^{\delta} \left( g\left(\psi\left( \sigma,\tau \right) \right)  - g\left(\psi\left( 0,0 \right) \right) \right)^2 \norm{ \psi_s \times \psi_t \left( \sigma,\tau \right)}\, dtds \right)^{\frac{1}{2}} \rightarrow 0 \text{ as } \delta \rightarrow 0.
\end{align*}
Next, as  $\psi_t,\psi_s$ and $\frac{\partial \gamma }{\partial r}$ are continuous functions, we can use dominated convergence theorem to yield
\begin{align*}
II &=\abs{ \int_{-\frac{\eps}{2}}^{\frac{\varepsilon}{2}} \int_{-\frac{\eps}{2}}^{\frac{\varepsilon}{2}} \frac{\partial \gamma}{\partial r}(0,s,t) g\left(\psi\left( 0,0\right)\right) \left(   \norm{ \psi_s \times \psi_t \left( \dfrac{2 \delta s }{\varepsilon},\dfrac{2 \delta t }{\varepsilon}\right)}  -    \norm{ \psi_s \times \psi_t \left( 0,0\right)}   \right)\,dtds} \\ &\rightarrow 0 \text{ as } \delta \rightarrow 0.
\end{align*} 
We therefore have 
\begin{equation*}
 g(x_0) \norm{\psi_s \times \psi_t (0,0)} \int_{-\frac{\varepsilon}{2}}^{\frac{\varepsilon}{2}} \int_{-\frac{\varepsilon}{2}}^{\frac{\varepsilon}{2}} \frac{\partial \gamma}{\partial r}(0,s,t)\, dsdt = \lim_{\delta \rightarrow 0} \int_{\partial \Omega}\frac{\partial \varphi_\delta}{\partial \nu}g(x)\,dx = 0
\end{equation*}
as $\delta \rightarrow 0$.

%%%%%%%%%%%%%%%%%%%%%%%%%%%%%%%%%%%%%%%%%%%%%%%%%%%%%%%%%%%%%%%%%%%%%
%%%%%%%%%%%%%%%%%%%%%%%%%%%%%%%%%%%%%%%%%%%%%%%%%%%%%%%%%%%%%%%%%%%%%

Choosing $\gamma(r,s,t) = r\cdot a(s)b(t)$, where $a, b:(-\varepsilon,\varepsilon) \rightarrow \RR$ are smooth and satisfy

\begin{itemize}
\item $a(s),b(t)\geq 0$,
\item $a(s),b(t) >0$ for $s,t \in (-\frac{\varepsilon}{3},\frac{\varepsilon}{3})$,
\item $a(s),b(t) = 0$ for $s,t \notin (-\frac{\varepsilon}{2},\frac{\varepsilon}{2})$.
\end{itemize} 
Then we have
\begin{equation*}
\int_{-\frac{\varepsilon}{2}}^{\frac{\varepsilon}{2}}\int_{-\frac{\varepsilon}{2}}^{\frac{\varepsilon}{2}} \frac{\partial \gamma}{\partial r}(0,s,t) \,ds dt= \int_{-\frac{\varepsilon}{2}}^{\frac{\varepsilon}{2}} \int_{-\frac{\varepsilon}{2}}^{\frac{\varepsilon}{2}}a(s)b(t)\,dsdt >0
\end{equation*} 
and $g(x_0) = 0$ as required.
\end{proof}

\bigskip We can now proceed with the proof of Theorem~\ref{thm:bdrydet}.

\begin{proof}{\bf [of Theorem~\ref{thm:bdrydet}]}
First we consider $d=2$. Let $\bar{n}$ be a critical point of $\mbbE$ in $\mcA$. By lemma~\ref{lem:critical point equation} $\bar{n}$ satisfies the equation
\begin{equation}\label{eq : e-l}
 \int_{\partial \Omega} \sum_{\beta , \alpha = 1}^d \varphi_{\beta , \alpha}\bar{n}_\alpha \left(\nu_\beta - \bar{n}_\beta \langle \bar{n} ,\nu \rangle \right) dS=0,
\end{equation}
for all $\varphi \in C^\infty(\Omega,\RR^3)$ such that $\varphi=0$ on $\partial \Omega$.
Let $\nu(x) = (\nu_1(x) ,\nu_2(x))$ be the unit vector to $\partial \Omega$ at $x$ and $\tau(x)$ be a unit tangent to $\partial \Omega$ at $x$. Then at $x \in \partial \Omega$ we have 
\begin{equation}
\bar{n} = \langle \bar{n}, \nu \rangle \nu + \langle \bar{n} ,\tau \rangle \tau.
\end{equation}
plugging this into \eqref{eq : e-l} gives
$$0 = \int_{\partial \Omega}\sum_{\beta , \alpha = 1}^d \varphi_{\beta , \alpha}\left(\langle \bar{n}, \nu \rangle \nu_\alpha + \langle \bar{n} ,\tau \rangle \tau_\alpha\right) \left(\nu_\beta - \bar{n}_\beta \langle \bar{n} ,\nu \rangle \right) dS.$$
Since $\varphi = 0$ on $\partial \Omega$ we have $\frac{\partial \varphi_\beta}{\partial \tau} = \varphi_{\beta , \alpha} \cdot \tau_\alpha = 0$. Hence \eqref{eq : e-l} simplifies to
\begin{equation}\label{eq: e-l 2}
0 = \int_{\partial \Omega} \sum_{\beta  = 1}^d \frac{\partial \varphi_{\beta}}{\partial \nu}\langle \bar{n}, \nu \rangle  \left(\nu_\beta - \bar{n}_\beta \langle \bar{n} ,\nu \rangle \right) dS.
\end{equation}

If $d=3$ by choosing unit vector fields $P,Q$ such that $P\times Q = \nu$ and $\langle P,Q\rangle = 0$, then by writing
$$\bar{n} = \langle \bar{n}, \nu \rangle \nu + \langle \bar{n} ,P \rangle P +\langle \bar{n} , Q \rangle Q $$
and substituting this into \eqref{eq : e-l} we get that \eqref{eq: e-l 2} holds for $d=3$.
By setting $\varphi_i \equiv 0$  for $i\neq \beta$ in \eqref{eq: e-l 2}, we get
$$\int_{\partial \Omega} \frac{\partial \varphi}{\partial \nu}\langle \bar{n}, \nu \rangle  \left(\nu_\beta - \bar{n}_\beta \langle \bar{n} ,\nu \rangle \right) dS = 0 $$
for $\beta = 1,2,3$ and for all $\varphi \in C^\infty (\Omega , \RR)$ such that $\varphi|_{\partial \Omega} = 0$. By Lemma \ref{lemma:g(x_0) is 0 2d+3d}, we conclude that
\begin{equation}
\langle \bar{n}, \nu \rangle  \left(\nu_\beta - \bar{n}_\beta \langle \bar{n} ,\nu \rangle \right) = 0 \text{ for } \beta = 1,\ldots,d.
\end{equation} 
If $\langle \bar{n} ,\nu \rangle =0$ then we are done. Suppose $\langle \bar{n} ,\nu \rangle \neq 0$, then we must have
$$\left(\nu_\beta - \bar{n}_\beta \langle \bar{n} ,\nu \rangle \right) = 0 \text{ for } \beta = 1,\ldots,d. $$
Since $\langle \bar{n} ,\nu \rangle \neq 0$ 
$$\bar{n}_\beta = \frac{\nu_\beta}{\langle \bar{n}, \nu \rangle} \text{ for } \beta = 1 \ldots d,$$
which implies
\begin{align*}\langle \bar{n} , \nu \rangle &= \sum_{\beta =1}^d \frac{\nu_\beta}{\langle \bar{n} , \nu\rangle}\nu_\beta \\
&=  \frac{\langle \nu , \nu \rangle}{\langle \bar{n} , \nu \rangle} \\
&= \frac{1}{\langle \bar{n} , \nu \rangle}.
\end{align*}
Hence
$$\langle \bar{n} , \nu\rangle^2 = 1$$
and therefore
$$\langle \bar{n} , \nu\rangle = \pm 1.$$
\end{proof}

%%%%%%%%%%%%%%%%%%%%%%%%%%%%%%%%%%%%
%%%%%%%%%%%%%%%%%%%%%%%%%%%%%%%%%%%%
%%%%%%%%%%%%%%%%%%%%%%%%%%%%%%%%%%%%

\section{Partial regularity for tangential boundary conditions}
\label{sect:preg}
We restrict from now on our attention to the case when the boundary conditions are tangential, i.e. $n(x)\cdot\nu(x)=0$ for all $x\in\partial\Omega$, where $\nu$ is the outward pointing unit normal.

%Motivated by Theorem \ref{thm:bdrydet} we now consider the energy $\mbbE$, as given in \eqref{def:Energ}, over the space of functions with tangential boundary conditions.

% that is over the space
%\begin{align*}
%\tilde{\mcU}:=\{u \in W^{1,2}(\Omega, \SSS^2)\cap W^{2,1}(\Omega,\SSS^2): {\rm Trace}(u) \in \mathcal{T} \} 
%\end{align*}
%where 
%\begin{equation}
%\mathcal{T}:= \{\gamma \in H^{\frac{1}{2}}(\partial \Omega , \SSS^2): \gamma(x)\cdot \nu(x) = 0 \text{ for almost every } x \in \partial \Omega\}.
%\end{equation}
We note that for any vector $v$ that is tangent to $\partial \Omega$ at $x$ we have
\begin{align*}
\frac{d \langle n(x),\nu(x)\rangle}{dv} := \sum_{\alpha =1}^3\frac{\partial \langle n(x),\nu(x)\rangle}{\partial x^\alpha}v^\alpha =0.
\end{align*}
as $\langle n(x),\nu(x)\rangle$ is constant in the $v$ direction. That is tangential derivatives of $\langle n(x),\nu(x)\rangle$ are zero for all $x \in \partial \Omega$. Hence, as $n(x)$ is a tangent vector to $\partial \Omega$ at $x \in \partial \Omega$, we have 
\begin{align*}
 \sum_{\alpha,\beta=1}^d n^\alpha \frac{\partial n^\beta}{\partial x^\alpha}\nu^\beta &= \sum_{\alpha=1}^d n^\alpha\frac{\partial (n\cdot \nu)}{\partial x^\alpha} - \sum_{\alpha,\beta=1}^d \frac{ \partial \nu^\beta}{\partial x^\alpha} n^\beta n^\alpha \\
 &=  - \sum_{\alpha,\beta=1}^d \frac{ \partial \nu^\beta}{\partial x^\alpha} n^\beta n^\alpha.
\end{align*}
Therefore, for maps with tangential boundary conditions we can write $\mbbE$ as 
\begin{equation*}
\mathbb{G}[n]:= \int_\Omega K\sum_{\alpha,\beta=1}^d \frac{\partial n^\alpha}{\partial x^\beta}\frac{\partial n^\alpha}{\partial x^\beta} - K_{13}\int_{\partial \Omega} \sum_{\alpha,\beta=1}^d \frac{ \partial \nu^\beta}{\partial x^\alpha} n^\beta n^\alpha.
\end{equation*}
This energy makes sense for maps in $W^{1,2}(\Omega,\SSS^2)$, and so we now focus on the slightly simpler task of minimizing $\mathbb{G}$ over the function space
\be\label{def:mcU+}
\mcU :=\{u \in W^{1,2}(\Omega, \SSS^2): {\rm Trace}(u) \in \mathcal{T} \} 
\ee
where 
\be\label{def:mcT+}
\mathcal{T}:= \{\gamma \in H^{\frac{1}{2}}(\partial \Omega , \SSS^2): \gamma(x)\cdot \nu(x) = 0 \text{ for almost every } x \in \partial \Omega\}.
\ee

Given the topological constraints associated with having tangential boundary conditions the first issue is to show that the function space $\mcU$ is non-empty. This will be addressed in the next subsection, while in the last subsection we will prove a partial regularity result for the minimizers.

\subsection{Function Space is non empty}
In this section we consider $\Omega$ to be a bounded domain of class $C^2$ and study  whether or not the function space $\mathcal{U}$ defined through \eqref{def:mcU+},\eqref{def:mcT+} is non-empty. 

If, for instance, $\partial\Omega$ is the torus then there exist smooth maps in $\mathcal{T}$ that have smooth extensions to the solid torus $\BB\times \SSS^1$ and hence $\mathcal{U}$ would be non empty. However, if $\partial \Omega$ is  $\SSS^2$ then the ``Hairy Ball Theorem" tells us that there are no continuous maps in $\mathcal{T}$ and so it is not immediate that $\mcU$ is non empty. Fortunately, since $H^{\frac{1}{2}}$  in dimension two is larger than the space of continuous functions we are able to show that $\mathcal{T}$ and $\mathcal{U}$ are still non-empty even when $\partial \Omega$ a general $C^2$ surface. \footnote{We just need $C^2$ regularity for using Theorem~\ref{thm:extension}, for all the other results of the section it would suffice to have $C^1$.}

To this end we use an extension Theorem from \cite{HardtLin87} (stated as  Theorem~\ref{thm:extension} in the following) which tells us that a function in $\mathcal{T}$ can be extended to a function in $\mathcal{U}$. This means to show $\mathcal{U}$ is non empty we only need to show that $\mathcal{T}$ is non empty. To do this we construct a function that belongs to $\mathcal{T}$ through a sequence of lemmas. We remark that a map $\gamma \in H^{\frac{1}{2}}(\partial \Omega,\SSS^2)$ is in $\mathcal{T}$ if and only if $\gamma(x) \in T_x\partial \Omega$ for almost every $x\in \partial \Omega$, where $T_x\partial \Omega$ is the tangent space to $\partial \Omega $ at $x$.

In Lemma \ref{lemma:vector field extention lemma}  below we will give necessary and sufficient conditions to extend a vector field from the boundary of a manifold $N$ to its interior. Before we can state Lemma \ref{lemma:vector field extention lemma} we must first introduce some notation:

 If $U$ is a $C^2$ manifold embedded in $\RR^d$, let $T_xU$ be the tangent space to $U$ at $x \in U$. Let $g$ be a smooth vector field on $U$, i.e a smooth map $g:U \rightarrow \RR^d$ such that $g(x) \in T_xU$ for every $x \in U$. Then let $ind(g,U)$ denote the index of $g$ on $U$ (we refer the reader to \cite{Hirschbook},\cite{Spivak_Vol1} or \cite{Canevari2014} 
for detailed properties of $ind(g,U)$).

If $U$ is a manifold with boundary we define
\begin{equation*}
 \partial \_U[g]:= \{x \in \partial U: g(x) \cdot \nu(x) <0\},
\end{equation*}
where $\nu(x)$ is the outward-pointed unit normal to $U$ at $x$.

 Furthermore, we recall that if $U$ is a compact surface  then its Euler characteristic $\chi(U)$ can be related to its topological genus $k$ through the formula
\begin{equation*}
\chi(U) = 2(1-k).
\end{equation*}

\begin{lemma}\label{lemma:vector field extention lemma}
Let $N$ be a $C^1$ manifold with boundary embedded in  $ \RR^d$  and $g \in C^{\infty}(\partial N, \RR^d)$ such that 
\begin{equation}
g(x)  \in T_x(\partial N) \text{ and } \abs{g(x)}=1.\label{eq:conditions on g}
\end{equation}

Then $g$ admits an extension to a continuous field $V:N \rightarrow \RR^d$ such that, for every $x \in N$, $V(x) \in T_xN$, $\abs{V(x)}=1$  and $\left.V\right|_{\partial N} =g$  if and only if 
\begin{equation}\label{eq:condition on N}
\text{ind}(g,\partial\_N[g]) = \chi(N).
\end{equation}

\end{lemma}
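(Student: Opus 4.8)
The statement is a relative/obstruction-theoretic result: a nonvanishing tangent field on $\partial N$ extends to a nonvanishing tangent field on $N$ precisely when an index obstruction vanishes. The plan is to reduce to the classical Poincaré–Hopf theorem with boundary by a doubling/collar argument. First I would work, as the hypotheses allow, with $N$ a compact $C^1$ manifold with boundary (the noncompact case being irrelevant to our application, where $N = \bar\Omega$); smoothing $N$ slightly, we may as well assume everything is $C^\infty$. The key classical input is the boundary Poincaré–Hopf formula: if $W$ is a smooth vector field on a compact manifold $M$ with boundary, tangent to $M$, with only nondegenerate zeros in the interior, and if $W|_{\partial M}$ is nowhere zero, then
\[
\sum_{\text{zeros }p\text{ of }W} \mathrm{ind}_p(W) \;=\; \chi(M) \;-\; \mathrm{ind}\bigl(W|_{\partial\_M[W]},\,\partial\_M[W]\bigr),
\]
i.e. the interior index count equals $\chi(M)$ minus the index of the restriction over the ``inward-pointing'' part of the boundary. (This is the form stated, e.g., in Milnor's book and in \cite{Canevari2014}; one should cite it there.)

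\textbf{Necessity.} Suppose $V:N\to\RR^d$ is a continuous tangent field with $|V|=1$ everywhere and $V|_{\partial N}=g$. Then $V$ has no zeros at all, so in the boundary Poincaré–Hopf formula applied to $M=N$, $W=V$ the left-hand side is $0$ and $\partial\_N[V]=\partial\_N[g]$, giving $\mathrm{ind}(g,\partial\_N[g])=\chi(N)$ immediately. (If one prefers to avoid perturbing the nowhere-zero $V$, one can instead argue that $V$ defines a section of the unit tangent sphere bundle over $N$ extending $g$, and that the primary obstruction to extending $g$ over $N$ — living in $H^{n}(N,\partial N;\pi_{n-1}(S^{n-1}))$, $n=\dim N$ — is exactly the relative Euler class, whose evaluation on the fundamental class is $\chi(N)-\mathrm{ind}(g,\partial\_N[g])$.)

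\textbf{Sufficiency.} Assume $\mathrm{ind}(g,\partial\_N[g])=\chi(N)$. Pick any smooth tangent field $\tilde V$ on $N$ agreeing with $g$ on $\partial N$ (extend $g$ arbitrarily using a tubular neighborhood and a partition of unity) and perturb it in the interior so that its zeros are finite and nondegenerate without touching the boundary. By the boundary Poincaré–Hopf formula its total interior index is $\chi(N)-\mathrm{ind}(g,\partial\_N[g])=0$. Now the standard ``index-zero cancellation'' move: group the zeros and use that a tangent field on a connected manifold with a collection of interior zeros of total index $0$ can be modified (sliding the zeros together along paths inside $N\setminus\partial N$ and cancelling pairs of opposite index, as in the proof that $\chi=0$ implies existence of a nonvanishing field) to one with \emph{no} zeros, again without altering it near $\partial N$. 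Finally normalize: $V:=\tilde V/|\tilde V|$ is continuous (indeed as regular as we built $\tilde V$), takes values in unit tangent vectors, and restricts to $g$ on $\partial N$ since $|g|=1$. This yields the desired extension.

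\textbf{Main obstacle.} The technical heart is the zero-cancellation step in the sufficiency direction: turning ``total interior index $=0$'' into ``no zeros,'' while keeping the field fixed near $\partial N$ and respecting the tangency constraint $V(x)\in T_xN$. This requires that $N\setminus\partial N$ be connected (true in our application, and in general one applies the argument component-by-component, noting $\chi$ and the boundary index decompose accordingly) and a careful local surgery to merge and annihilate zero pairs; I would either carry this out by hand in local charts using the explicit $\RR^n$ model for pushing zeros through a disc, or — cleaner — cite the relative obstruction theory statement that the relative Euler class is the complete obstruction to a nonvanishing extension over an $n$-manifold with boundary, identify that class with $\chi(N)-\mathrm{ind}(g,\partial\_N[g])$ via the boundary Poincaré–Hopf formula, and be done. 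A minor additional point to address is the low regularity ($C^1$ manifold, $C^\infty$ field $g$): one first approximates $N$ by a $C^\infty$ manifold with $C^1$-close boundary, notes that $\chi$ and the relevant indices are invariant under this approximation, proves the statement there, and transfers back.
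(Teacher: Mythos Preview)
Your proposal is correct and follows the same overall strategy as the paper's proof: extend $g$ to a tangent field on $N$, arrange finitely many interior zeros, invoke the Morse/Poincar\'e--Hopf formula with boundary to see that the total interior index vanishes under the hypothesis \eqref{eq:condition on N}, and then eliminate the zeros. The paper, however, only writes out the sufficiency direction; your inclusion of the (easy) necessity argument via the same boundary index formula is a genuine addition.

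The differences in execution are mild but worth noting. For the initial extension the paper passes to the topological double $X$ of $N$, extends $g$ over $X$ via a tubular neighbourhood and cutoff, and then appeals to transversality on the closed manifold $X$ to obtain a field with finitely many zeros restricting to $g$ on $\partial N$; your direct collar-plus-partition-of-unity extension on $N$ itself is cleaner and avoids the doubling. For the cancellation step, the paper makes your ``by hand'' option concrete in a specific way: it pushes all zeros into a single coordinate ball $D\subset N$, reads the vanishing total index as $\deg(F/|F|,\partial D;\SSS^{n-1})=0$, and then cites an $\SSS^{n-1}$-valued (harmonic) extension over $D$ to fill in without zeros. This is exactly the local-surgery realisation you sketch, so your identification of this as the main obstacle is on point; the obstruction-theoretic alternative you mention would also work but is heavier than what is needed here.
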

\begin{proof}
Let $g \in C^{\infty}(\partial N, \RR^d)$ such that \eqref{eq:conditions on g} and \eqref{eq:condition on N} hold. Let $X$ be the topological double of $N$, that is, the manifold obtained by glueing two copies of $N$ along their boundaries (see \cite{Lee_topManifolds} example 3.80 for a detailed construction of $X$). By modifying the value of $d$ if needed we can assume that $X$ is embedded in  $\RR^d$.

Let $U\subset X$ be a tubular neighbourhood of $\partial N$ such that the nearest point projection $\pi:U \rightarrow \partial N$ is well defined. Let $\varphi:X \rightarrow \RR$ be a smooth function such that $\left.\varphi\right|_{\partial N}=1$ and $\left. \varphi\right|_{X\setminus U}=0$. Then let $\tilde{G}:X\rightarrow \RR^d$ be the extension of $g$ defined by
\begin{equation*}
\tilde{G}(x):= \begin{cases}
Proj_{T_xX}\left(g(\pi(x))\right)\varphi(x) \text{ for } x \in X \cap U, \\
0 \text{ for } x \in X \setminus U,
\end{cases}
\end{equation*}
where $Proj_{T_xX}(y)$ is the projection of $y$ onto $T_xX$. As $0 \notin  \tilde{G}(\partial N)$, by the Transitivity Theorem (see \cite{BrockerJanichBook} Theorem 14.6), there exists a smooth tangent vector field $F$ on $X$ such that $F$ has finitely many zeros in $N$ and $\left.F\right|_{\partial N}=g$. Define $P_{\partial N}F$ to be the map
\begin{equation*}
P_{\partial N}F(x) := Proj_{T_x\partial N}(F(x)) \text{ for } x \in \partial N
\end{equation*}
and for a continuous vector field $v:N\rightarrow \RR^d$, define $\partial \_N[v]$ is to be the set
 \begin{equation*}
 \partial \_N[v]:= \{x \in \partial N: v(x) \cdot \nu(x) <0\} .
 \end{equation*}
By Morse's Index Formula (see \cite{Morse1929}) and the stability of the index we have 
\begin{align*}
ind(F,N) &= ind(\tilde{G},N) \\
&= \chi(N) - ind(P_{\partial N}\tilde{G},\partial\_N[P_{\partial N}\tilde{G}]) \\
&= \chi(N) - ind(g,\partial\_ N[g]) \\
&= 0.
\end{align*}

We now just need to modify $F$ such that $\abs{F}>0$ on $N$. Up to a continuous transformation, we can assume that all the zeros are contained in one coordinate patch $D\subset N$ , with chart $\phi:\overline{D} \rightarrow \overline{B^d(0,1)}$ such that $\phi(\partial D)= \partial B^d(0,1)$, where $B^d(0,1)$ is the ball in $\RR^d$ centred at $0$ with radius 1.  Let $\tilde{F}:B^d(0,1) \rightarrow \RR^d$ be the map defined as 
\begin{equation*}
\tilde{F}(x):= F(\phi^{-1}(x))
\end{equation*}
 and assume that $|\tilde{F}|>0$ in $B^d(0,1) \setminus B^d(0,{\frac{1}{2}})$. Then,
\begin{equation*}
0 = ind(\tilde{F},B^d(0,1)) = deg\left(\frac{F}{|F|},\partial D ; \SSS^{d-1}\right).
\end{equation*} 
It can now be shown, as proved in \cite{HelienWood08}, that there exists a harmonic field $\psi:B^d(0,1) \rightarrow  \SSS^{d-1}$ such that $\left.\psi\right|_{\partial B^d(0,1)} = \frac{\tilde{F}}{|\tilde{F}|}$.  Finally, we define our extension:\begin{align*}
V(x) = \begin{cases}
\frac{F(x)}{\abs{F(x)}} \text{ if } x \in N \setminus D, \\
\psi(\phi(x)) \text{ if } x \in D.
\end{cases}
\end{align*}
$V$ is continuous and smooth everywhere apart from $\partial D$.
\end{proof}
\textbf{Remark}: Note that in the above construction we get a vector field on $N$ that is smooth almost everywhere.

\bigskip
We now  relate the $H^{\frac{1}{2}}$ to a space whose norm is easier to compute, the $W^{1,p}$ space:

\begin{lemma}\label{lemma: W^1,p into H^1/2}
Let $U \subset \RR^2$ be an open set with $C^1$ boundary and $u \in W^{1,p}(U,\SSS^2)$ for $1<p<2$. Then $u \in H^{\frac{1}{2}}(U , \SSS^2)$.
\end{lemma}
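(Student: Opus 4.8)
The plan is to establish the embedding $W^{1,p}(U,\SSS^2) \hookrightarrow H^{1/2}(U,\SSS^2)$ for $1 < p < 2$ by a direct estimate of the Gagliardo (Sobolev--Slobodeckij) seminorm. Recall that for $u \in L^2(U)$ the $H^{1/2}$-seminorm is
\[
[u]_{H^{1/2}(U)}^2 = \int_U \int_U \frac{|u(x) - u(y)|^2}{|x-y|^{2 + 2\cdot\frac{1}{2}}}\,dx\,dy = \int_U \int_U \frac{|u(x) - u(y)|^2}{|x-y|^{3}}\,dx\,dy,
\]
(with $U \subset \RR^2$, so $d = 2$ and the exponent in the denominator is $d + 2s = 2 + 1 = 3$). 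Since $u$ takes values in $\SSS^2$ it is bounded, so $u \in L^2(U)$ automatically on a bounded set; thus the whole point is to control this double integral by $\|\nabla u\|_{L^p}$.

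**Key steps.** First I would reduce to a bounded Lipschitz (indeed $C^1$) domain and use the fractional Sobolev embedding / interpolation: the embedding $W^{1,p}(\RR^2) \hookrightarrow W^{s,q}(\RR^2)$ holds whenever $s - \frac{2}{q} \le 1 - \frac{2}{p}$ with $s \le 1$, and in particular one can land in $W^{1/2, q}$ for a suitable $q$. Concretely, with $s = \frac12$ one needs $\frac12 - \frac2q \le 1 - \frac2p$, i.e. $\frac2q \ge \frac2p - \frac12$; choosing $q = 2$ requires $1 \ge \frac2p - \frac12$, i.e. $p \ge \frac{4}{3}$, which is not the full range. So the cleaner route is: for $1 < p < 2$, first use the Sobolev embedding $W^{1,p}(U) \hookrightarrow L^{p^*}(U)$ with $p^* = \frac{2p}{2-p} > 2$, then interpolate. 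Actually the most robust argument is the following two-part splitting of the Gagliardo integral, which I would carry out explicitly: split $U \times U$ into the near-diagonal region $\{|x - y| < \rho\}$ and its complement. On the far region, $|u(x) - u(y)| \le 2$ (values in $\SSS^2$) and $\int\int_{|x-y|\ge \rho} |x-y|^{-3}\,dx\,dy < \infty$ since $U$ is bounded and $|U| < \infty$ — this is an elementary finite quantity. On the near-diagonal region, use the standard pointwise bound for Sobolev functions: writing $x - y$ in the integral via the fundamental theorem of calculus along the segment (valid for $W^{1,1}$, hence $W^{1,p}$, functions after mollification and a limiting argument on a $C^1$ domain where segments stay inside up to a bounded distortion), one gets
\[
\int_U\!\int_{U \cap \{|x-y|<\rho\}} \frac{|u(x)-u(y)|^2}{|x-y|^3}\,dx\,dy \;\le\; C \int_U\!\int_{|h|<\rho} \frac{1}{|h|^{3}}\Big(\int_0^1 |\nabla u(x + th)|\,|h|\,dt\Big)^2 dh\,dx,
\]
and then apply Minkowski's integral inequality together with Hölder in $t$ to bound this by $C\,\rho^{\,\alpha}\,\|\nabla u\|_{L^p(U)}^2$ for some $\alpha > 0$ when $p < 2$ — this is exactly where the restriction $p < 2$ is consumed, since the $h$-integral $\int_{|h|<\rho} |h|^{-3+2-2(1-1/p')}\,dh = \int_{|h| < \rho} |h|^{-1 + \text{(something)}}\,dh$ converges precisely in the subcritical regime (the borderline $p = 2$ corresponds to $H^1 \hookrightarrow H^1$, needing no gain, but gives only the BMO-type limiting embedding into $H^{1/2}$ with a log). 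Alternatively — and this is cleanest to write — I would simply invoke the known continuous embedding $W^{1,p}(U) \hookrightarrow W^{\sigma,2}(U)$ valid for $1 \le p < 2$ and $\sigma = 1 - \big(\frac2p - 1\big) \cdot \frac{?}{}$... to avoid fiddly exponent bookkeeping, cite the standard fractional-Sobolev embedding theorem (e.g. Di Nezza--Palatucci--Valdinoci, \emph{Hitchhiker's guide to the fractional Sobolev spaces}, Thm. 6.7 and the interpolation inequality Thm. 6.10 there), which gives $W^{1,p} \hookrightarrow H^{1/2} = W^{1/2,2}$ for all $1 < p < 2$ on a bounded extension domain.

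**Main obstacle.** The genuine content — as opposed to bookkeeping — is the near-diagonal estimate and, implicitly, the verification that the geometry of the $C^1$ domain $U$ does not spoil the segment-integration argument (segments joining $x, y \in U$ may exit $U$); this is handled by the standard trick of working first on a ball or cube contained in a chart, using an $W^{1,p}$-extension operator for the $C^1$ domain $U$, and then invoking the already-known embedding on $\RR^2$ — so in practice the obstacle dissolves into a citation. The only subtlety worth a sentence in the write-up is that $u$ lands in $\SSS^2 \subset \RR^3$, so the statement is about vector-valued maps, but since $H^{1/2}(U,\SSS^2)$ is defined componentwise (plus the pointwise constraint $|u| = 1$, which is preserved because $u$ already satisfies it), the scalar embedding applied coordinate-by-coordinate suffices. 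I would therefore structure the proof as: (i) recall the definitions of the seminorms; (ii) apply a $C^1$-domain extension; (iii) quote the fractional Sobolev embedding $W^{1,p}(\RR^2)\hookrightarrow H^{1/2}(\RR^2)$ for $p<2$; (iv) restrict back to $U$ and note the $\SSS^2$-constraint passes to the limit trivially. The $p<2$ hypothesis is exactly the threshold making step (iii) available without a logarithmic loss.
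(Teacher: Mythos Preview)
Your proposal has a genuine gap in the range $1 < p < \tfrac{4}{3}$. You yourself correctly observe early on that the direct Sobolev embedding $W^{1,p}(\RR^2)\hookrightarrow W^{1/2,2}(\RR^2)$ requires $p\ge \tfrac{4}{3}$; yet in your final step~(iii) you nevertheless propose to ``quote the fractional Sobolev embedding $W^{1,p}(\RR^2)\hookrightarrow H^{1/2}(\RR^2)$ for $p<2$'' as if it held for the full range. It does not: the scaling condition $1-\tfrac{2}{p}\ge \tfrac{1}{2}-\tfrac{2}{2}$ is sharp, and for $1<p<\tfrac{4}{3}$ there exist unbounded functions in $W^{1,p}(\RR^2)\setminus H^{1/2}(\RR^2)$. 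The same issue infects your near-diagonal estimate: you claim a bound of the form $C\rho^{\alpha}\|\nabla u\|_{L^p}^2$, but such a bound purely in terms of $\|\nabla u\|_{L^p}$ would imply the (false) linear embedding $W^{1,p}\hookrightarrow H^{1/2}$ for all $p>1$. The $\SSS^2$-valuedness is used in your argument only in the far-diagonal region, where it is harmless but not decisive; the essential use of boundedness must occur near the diagonal (or, equivalently, at the level of the function spaces themselves).

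The paper's route makes this explicit. It first drops from $W^{1,p}$ to an intermediate fractional space $W^{s,p}$ via the trivial embedding $W^{1,p}(U)\hookrightarrow W^{s,p}(U)$ for $0<s<1$ (Theorem~\ref{thm: frac sob embed}), extends to $\RR^2$ (Theorem~\ref{thm:frac sob extension lemma}), and then invokes Maz'ya's interpolation inequality (Theorem~\ref{thm: fractional sobolev spaces interpolation}),
\[
\|u\|_{W^{\theta s,\,q/\theta}(\RR^2)}\;\le\; C\,\|u\|_{W^{s,q}(\RR^2)}^{\theta}\,\|u\|_{L^\infty}^{\,1-\theta},
\]
choosing parameters so that $\theta s=\tfrac12$ and $q/\theta=2$. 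The $L^\infty$ factor is precisely where the hypothesis $u(x)\in\SSS^2$ enters and is what rescues the range $1<p<\tfrac{4}{3}$. If you want to salvage your direct Gagliardo-splitting argument, you must likewise exploit boundedness in the near-diagonal piece, for instance by writing $|u(x)-u(y)|^2\le 2^{\,2-a}|u(x)-u(y)|^{a}$ for a suitable $a\in[p,\tfrac{p}{2-p}]\cap(0,2]$ and then using $W^{1,p}\hookrightarrow W^{1/a,a}$, which does hold in that range. Either way, the $L^\infty$ information is not optional.
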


In order to prove this we use the following Propositions from \cite{HitchhikersSobSpace} :
\begin{theorem}\label{thm: frac sob embed}
Let $p \in [1,\infty)$ and $s \in (0,1)$. Let $\Omega$ be an open set in $\RR^n$ of class $C^{1}$ with bounded boundary and $u\in W^{1,p}(\Omega,\RR)$. Then 
$$\norm{u}_{W^{s,p}(\Omega)} \leq C \norm{u}_{W^{1,p}(\Omega)}$$
for some positive constant $C= C(n,s,p)\geq 1$.
\end{theorem}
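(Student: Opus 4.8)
Since $\norm{u}_{W^{s,p}(\Omega)}^p = \norm{u}_{L^p(\Omega)}^p + [u]_{W^{s,p}(\Omega)}^p$ with Gagliardo seminorm
$$[u]_{W^{s,p}(\Omega)}^p := \int_\Omega \int_\Omega \frac{|u(x)-u(y)|^p}{|x-y|^{n+sp}}\,dx\,dy,$$
and the $L^p$ term is trivially dominated by $\norm{u}_{W^{1,p}(\Omega)}^p$, it suffices to bound the seminorm. The plan is in two steps: first establish the inequality in the model case $\Omega = \RR^n$, then reduce the general case to it via a $C^1$-extension operator.

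For $\Omega = \RR^n$, write $w \in W^{1,p}(\RR^n)$ and split the integration into the ``near'' region $\{|x-y|<1\}$ and the ``far'' region $\{|x-y|\ge 1\}$. On the far region I would use $|w(x)-w(y)|^p \le 2^{p-1}(|w(x)|^p + |w(y)|^p)$ together with $|x-y|^{-n-sp}\le 1$ and Tonelli, obtaining a bound $C(n,s,p)\,\norm{w}_{L^p(\RR^n)}^p$ because $\int_{\{|z|\ge 1\}} |z|^{-n-sp}\,dz < \infty$. On the near region, after the substitution $z = x-y$ and Tonelli,
$$\int_{\RR^n}\int_{\{|z|<1\}} \frac{|w(x+z)-w(x)|^p}{|z|^{n+sp}}\,dz\,dx \le \int_{\{|z|<1\}} \frac{1}{|z|^{n+sp}}\Big(\int_{\RR^n}|w(x+z)-w(x)|^p\,dx\Big)\,dz.$$
For the inner integral I would invoke the standard difference-quotient estimate for $W^{1,p}(\RR^n)$ (proved by density of $C_c^\infty$ and the fundamental theorem of calculus along segments), namely $\int_{\RR^n}|w(x+z)-w(x)|^p\,dx \le |z|^p\,\norm{\nabla w}_{L^p(\RR^n)}^p$. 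Substituting gives the bound $\norm{\nabla w}_{L^p(\RR^n)}^p\int_{\{|z|<1\}} |z|^{p(1-s)-n}\,dz$, and the last integral converges since $p(1-s) > 0$. Adding the two contributions yields $[w]_{W^{s,p}(\RR^n)}^p \le C(n,s,p)\,\norm{w}_{W^{1,p}(\RR^n)}^p$.

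For a general open set $\Omega$ of class $C^1$ with bounded boundary, I would apply a Stein/Calder\'on-type extension theorem: there is a bounded linear operator $E\colon W^{1,p}(\Omega)\to W^{1,p}(\RR^n)$ with $(Eu)|_\Omega = u$ and $\norm{Eu}_{W^{1,p}(\RR^n)} \le C(n,p,\Omega)\,\norm{u}_{W^{1,p}(\Omega)}$. Since $\Omega\subset\RR^n$, monotonicity of the integral gives $[u]_{W^{s,p}(\Omega)} \le [Eu]_{W^{s,p}(\RR^n)}$, and the model-case estimate then finishes the proof, with a constant that can be taken $\ge 1$ after absorbing factors. The step requiring the most care is the extension: this is exactly where the $C^1$ regularity and boundedness of $\partial\Omega$ are used, and it is essential, because the near-diagonal estimate performed on $\Omega$ alone would fail (for a nonconvex domain the points $x$ and $x+z$ need not be joined by a segment inside $\Omega$, so the difference-quotient bound is not directly available there). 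The remaining technical points — the use of Tonelli and the density argument underlying the difference-quotient estimate — are routine.
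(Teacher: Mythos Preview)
The paper does not prove this theorem: it is quoted verbatim as a known proposition from \cite{HitchhikersSobSpace} (the ``Hitchhiker's guide to fractional Sobolev spaces'') and used as a black box in the proof of Lemma~\ref{lemma: W^1,p into H^1/2}. There is therefore no in-paper argument to compare against.

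That said, your proposal is correct and is in fact essentially the argument given in the cited reference: split the Gagliardo double integral into near- and far-diagonal pieces, control the far piece by $\|u\|_{L^p}^p$ using integrability of $|z|^{-n-sp}$ over $\{|z|\ge 1\}$, control the near piece by the $L^p$ difference-quotient bound $\|\tau_z w - w\|_{L^p}\le |z|\,\|\nabla w\|_{L^p}$, and reduce a general $C^1$ domain to $\RR^n$ via a bounded extension operator. One small remark: the phrase ``together with $|x-y|^{-n-sp}\le 1$'' in your far-region step is superfluous and potentially misleading---what you actually use (and correctly state immediately after) is the finiteness of $\int_{\{|z|\ge 1\}}|z|^{-n-sp}\,dz$. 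A second remark: because the extension constant depends on $\Omega$, your argument as written produces $C=C(n,s,p,\Omega)$ rather than the $C=C(n,s,p)$ asserted in the statement; this is harmless for the application in the paper (Lemma~\ref{lemma: W^1,p into H^1/2} works on a fixed domain), and the same dependence is implicit in the cited source.
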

and
\begin{theorem}\label{thm:frac sob extension lemma}
Let $p \in [1,+\infty)$, $s \in (0,1)$ and $\Omega \subseteq \RR^n$ be an open set of class $C^{1}$ with bounded boundary. Then $W^{s,p}(\Omega)$ is continuously embedded in $W^{s,p}(\RR^n)$, namely there exists $C=C(n,\Omega)$ such that for any $u \in W^{s,p}(\Omega)$ there exists $\tilde{u} \in W^{s,p}(\RR^n)$ such that $\tilde u\big|_\Omega = u$ and
\begin{equation*}
\norm{\tilde{u}}_{W^{s,p}(\RR^n)}\leq C \norm{u}_{W^{s,p}(\Omega)}.
\end{equation*} 
\end{theorem}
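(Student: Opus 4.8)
\textbf{Proof proposal for Theorem~\ref{thm:frac sob extension lemma}.}

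The plan is to reduce the statement to the standard flattening-and-reflection argument for Sobolev extension, carried out in the fractional scale $W^{s,p}$ via the Gagliardo seminorm. First I would recall that for $s\in(0,1)$ the norm on $W^{s,p}(\Omega)$ is
\[
\norm{u}_{W^{s,p}(\Omega)}^p = \norm{u}_{L^p(\Omega)}^p + \iint_{\Omega\times\Omega}\frac{|u(x)-u(y)|^p}{|x-y|^{n+sp}}\,dx\,dy,
\]
so that an extension operator need only be shown to be bounded in $L^p$ and to control the double Gagliardo integral. Since $\partial\Omega$ is bounded and of class $C^1$, I would take a finite open cover $\{U_j\}_{j=1}^M$ of $\partial\Omega$ together with $C^1$ diffeomorphisms straightening $\partial\Omega\cap U_j$ to a piece of the hyperplane $\{x_n=0\}$, plus one more open set $U_0\Subset\Omega$; and a subordinate smooth partition of unity $\{\zeta_j\}_{j=0}^M$. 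Writing $u=\sum_j \zeta_j u$ it suffices to extend each piece $\zeta_j u$ separately (the $j=0$ piece extends by zero, which is trivially bounded in $W^{s,p}(\RR^n)$ because multiplication by a fixed $C^\infty_c$ cutoff is bounded on $W^{s,p}$, a fact I would verify by a one-line estimate on the Gagliardo integral using $|(\zeta u)(x)-(\zeta u)(y)|\le \norm{\zeta}_\infty|u(x)-u(y)| + \norm{\nabla\zeta}_\infty |x-y|\,|u(y)|$ and splitting the region $|x-y|\le 1$ versus $|x-y|>1$).

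Next, for a boundary piece, after the change of variables — which is bi-$C^1$ with Jacobians bounded above and below, hence distorts both $\norm{\cdot}_{L^p}$ and the Gagliardo kernel $|x-y|^{-(n+sp)}$ only by multiplicative constants depending on $\Omega$ — the problem becomes: given $v\in W^{s,p}(\RR^n_+)$ supported in a fixed bounded set, produce $\tilde v\in W^{s,p}(\RR^n)$ agreeing with $v$ on $\RR^n_+$ with $\norm{\tilde v}_{W^{s,p}(\RR^n)}\le C\norm{v}_{W^{s,p}(\RR^n_+)}$. Here I would use the \emph{even reflection} $\tilde v(x',x_n):=v(x',|x_n|)$. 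Boundedness in $L^p$ is immediate. For the seminorm one splits $\RR^n\times\RR^n$ into the four quadrants according to the signs of $x_n,y_n$; the $(+,+)$ block is exactly the original seminorm, the $(-,-)$ block equals it by the reflection symmetry, and for the mixed blocks, say $x_n>0>y_n$, one uses the elementary inequality $|x_n-(-y_n)| = |x_n+y_n|\le |x_n - y_n|$ (valid precisely when $x_n$ and $y_n$ have opposite signs), so that
\[
\frac{|v(x',x_n)-v(y',-y_n)|^p}{|x-y|^{n+sp}}\le \frac{|v(x',x_n)-v(y',-y_n)|^p}{\big(|x'-y'|^2+|x_n+y_n|^2\big)^{(n+sp)/2}},
\]
and the right side integrates over $x_n>0,\,y_n<0$ to (a constant times) the $(+,+)$ Gagliardo integral after the substitution $y_n\mapsto -y_n$. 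Summing the four blocks gives $[\tilde v]_{W^{s,p}(\RR^n)}^p\le 4\,[v]_{W^{s,p}(\RR^n_+)}^p$ up to the constants from the cutoffs and charts. Recombining the pieces $\sum_j \widetilde{\zeta_j u}$ and pulling back the charts yields $\tilde u\in W^{s,p}(\RR^n)$ with $\tilde u|_\Omega=u$ and the asserted bound, with $C=C(n,\Omega)$ absorbing the finite number of chart constants and the partition of unity.

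The main obstacle — and the only place genuine care is needed — is the mixed-quadrant estimate for the reflection: one must observe that the inequality $|x_n+y_n|\le|x_n-y_n|$ holds exactly because the two coordinates have opposite signs, which is precisely the configuration arising in those blocks, and that this lets one dominate the mixed Gagliardo integrand by the pure one after the harmless substitution. Everything else — the partition of unity, the $C^1$ change of variables distorting the kernel by bounded factors, the extension-by-zero of the interior piece — is routine once this point is in hand. (I note the target space is $\SSS^2$-valued in the application, but the even reflection manifestly preserves the pointwise constraint $|\tilde u|=1$ a.e., so the vector-valued and constrained cases follow from the scalar estimate applied componentwise.)
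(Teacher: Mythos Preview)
Your argument is correct and is essentially the standard reflection proof found in the reference the paper cites (Di~Nezza--Palatucci--Valdinoci, \emph{Hitchhiker's guide to the fractional Sobolev spaces}). Note, however, that the paper does \emph{not} supply its own proof of this theorem: it is quoted verbatim as an auxiliary result from \cite{HitchhikersSobSpace} and used as a black box in the proof of Lemma~\ref{lemma: W^1,p into H^1/2}. So there is no ``paper's proof'' to compare against; you have simply reconstructed the cited argument, and done so accurately --- the key mixed-quadrant estimate via $|x_n+y_n|\le |x_n-y_n|$ for $x_n y_n<0$ is exactly the crux.

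One minor remark: your closing parenthetical about the even reflection preserving the pointwise constraint $|\tilde u|=1$ is not quite to the point. The extension here goes through a partition of unity and $C^1$ charts, so the global extension $\tilde u$ is a sum of pieces pulled back from reflected half-space functions and will not in general remain $\SSS^2$-valued. But this is irrelevant for the paper's application, which only uses the scalar (componentwise) extension to conclude $u\in H^{1/2}$; the sphere constraint is handled separately via Theorem~\ref{thm:extension}.
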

We also use the interpolation lemma from \cite{Mazya02}
\begin{theorem} \label{thm: fractional sobolev spaces interpolation}
For all $u \in W^{s,q}(\RR^n)\cap L^\infty(\RR^n)$ there holds the inequality
\begin{equation*}
\norm{u}_{W^{\theta s,q/\theta}(\RR^n)} \leq c(n)\left(\frac{q}{q-1}\right)^\theta \left( \frac{1-s}{1-\theta}\right)^{\frac{\theta}{q}}\norm{u}^\theta_{W^{s,q}(\RR^n)}\norm{u}^{1-\theta}_{L^\infty}
\end{equation*}
where $0<s<1,1<q<\infty$, and $0<\theta<1$.
\end{theorem}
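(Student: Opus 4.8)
The statement is an interpolation inequality for the Gagliardo (Sobolev--Slobodeckij) fractional norms, and the plan is to split the left-hand norm into its Lebesgue part and its seminorm part and estimate each by an elementary pointwise argument. Write
\[
\norm{u}_{W^{\theta s, q/\theta}(\RR^n)} = \norm{u}_{L^{q/\theta}(\RR^n)} + [u]_{W^{\theta s, q/\theta}(\RR^n)},
\]
where $[v]_{W^{\sigma,p}}^{p} := \int_{\RR^n}\int_{\RR^n}|v(x)-v(y)|^{p}\,|x-y|^{-n-\sigma p}\,dx\,dy$. For the Lebesgue part, since $|u|\le\norm{u}_{L^\infty}$ almost everywhere one has $\int_{\RR^n}|u|^{q/\theta} \le \norm{u}_{L^\infty}^{q(1-\theta)/\theta}\int_{\RR^n}|u|^{q}$, hence $\norm{u}_{L^{q/\theta}} \le \norm{u}_{L^q}^{\theta}\norm{u}_{L^\infty}^{1-\theta}\le \norm{u}_{W^{s,q}}^{\theta}\norm{u}_{L^\infty}^{1-\theta}$. (If the homogeneous seminorm is intended on both sides, this step is simply omitted.)

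For the seminorm part, the crucial observation is that with $\sigma = \theta s$ and $p = q/\theta$ one has $n + \sigma p = n + sq$, so the singular kernel defining $[u]_{W^{\theta s, q/\theta}}$ is exactly that of $[u]_{W^{s,q}}$. Inserting the pointwise bound $|u(x)-u(y)|^{q/\theta} = |u(x)-u(y)|^{q}\,|u(x)-u(y)|^{q(1-\theta)/\theta} \le (2\norm{u}_{L^\infty})^{q(1-\theta)/\theta}|u(x)-u(y)|^{q}$ into the double integral gives $[u]_{W^{\theta s, q/\theta}}^{q/\theta} \le (2\norm{u}_{L^\infty})^{q(1-\theta)/\theta}[u]_{W^{s,q}}^{q}$, and raising to the power $\theta/q$ yields $[u]_{W^{\theta s, q/\theta}} \le 2^{1-\theta}\norm{u}_{L^\infty}^{1-\theta}[u]_{W^{s,q}}^{\theta}$. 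Adding this to the Lebesgue estimate produces the asserted inequality with a universal constant of order $2^{1-\theta}$.

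The one point requiring care is the sharp form of the constant in Theorem~\ref{thm: fractional sobolev spaces interpolation}, i.e.\ the explicit factors $(q/(q-1))^{\theta}$, $((1-s)/(1-\theta))^{\theta/q}$ and the dimensional constant $c(n)$. These do not appear in the unweighted Slobodeckij formulation above; they reflect the particular normalization of the fractional norms adopted in \cite{Mazya02} — where the $W^{\sigma,p}$ norm carries $\sigma$- and $p$-dependent weights tuned so that the limits $\sigma\to 0^{+}$ and $\sigma\to 1^{-}$ are well behaved — and, if the norm is taken in its Bessel-potential (Fourier-side) form, from a Mikhlin-type multiplier estimate that introduces the $n$-dependence. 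Matching the statement verbatim is therefore a matter of propagating those normalizing factors through the two elementary steps above; I expect this bookkeeping, not any analytic difficulty, to be the main obstacle, since the structural content — the kernel-exponent identity together with pointwise interpolation between $L^q$ and $L^\infty$ — is immediate.
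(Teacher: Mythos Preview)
The paper does not prove this theorem at all: it is quoted verbatim from \cite{Mazya02} and used as a black box in the proof of Lemma~\ref{lemma: W^1,p into H^1/2}. There is therefore no ``paper's own proof'' to compare against.

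Your argument is correct for the Gagliardo (Slobodeckij) normalization of $W^{\sigma,p}$. The key structural point --- that $\theta s\cdot(q/\theta)=sq$, so the singular kernel $|x-y|^{-n-\sigma p}$ is unchanged when passing from $(s,q)$ to $(\theta s,q/\theta)$ --- is exactly right, and the pointwise splitting of $|u(x)-u(y)|^{q/\theta}$ together with the elementary $L^q$--$L^\infty$ interpolation for the Lebesgue term immediately gives the inequality with constant $2^{1-\theta}$. You are also right that the explicit factors $(q/(q-1))^{\theta}$ and $((1-s)/(1-\theta))^{\theta/q}$ in the stated version are artefacts of the normalization used in \cite{Mazya02}, where the seminorm carries $(1-s)$- and $q$-dependent weights designed to stabilise the limits $s\to 1^-$ and $q\to 1^+$; tracking those weights through your two steps recovers the stated constants. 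For the purposes of this paper only the qualitative inequality is needed, so your version more than suffices.
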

We can now prove Lemma \ref{lemma: W^1,p into H^1/2}.	
\begin{proof}{\bf [Lemma \ref{lemma: W^1,p into H^1/2}]}
Let $1<p<2$ and $u \in W^{1,p}(U)$. By Theorem \ref{thm: frac sob embed} we have that $u \in W^{\frac{2}{3},p}(U)$. Let $\tilde{u} \in W^{\frac{2}{3},p}(\RR^2)$ be the extension given by Theorem \ref{thm:frac sob extension lemma}. By setting $s= \frac{2}{3},\theta=\frac{3}{4}$ and $q = \frac{3}{2}$ in Theorem \ref{thm: fractional sobolev spaces interpolation} we have
\begin{align*}
\norm{u}_{H^{\frac{1}{2}}(U)} &\leq \norm{\tilde{u}}_{H^{\frac{1}{2}}(\RR^2)} = \norm{\tilde{u}}_{W^{\frac{2}{3}\cdot\frac{3}{4}, 2}(\RR^2)}\leq C \norm{\tilde{u}}_{W^{\frac{2}{3}, \frac{3}{2}}(\RR^2)}^{\frac{3}{4}} \norm{u}_{L^{\infty}(\RR^2)}^{\frac{1}{4}}\\
&\leq C \norm{u}_{W^{\frac{2}{3}, \frac{3}{2}}(U)}^{\frac{3}{4}} \norm{u}_{L^{\infty}(\RR^2)}^{\frac{1}{4}} \leq C \norm{u}_{W^{1, \frac{3}{2}}(U)}^{\frac{3}{4}} \norm{u}_{L^{\infty}(\RR^2)}^{\frac{1}{4}}<\infty. 
\end{align*}
\end{proof}

In order to prove Proposition \ref{prop:nonemptyfspace} we use an extension theorem from \cite{HardtLin87}, which we state here for the reader's convenience. 

\begin{theorem}\label{thm:extension}
If $1<p<m$, $\Omega$ is a bounded $C^2$ domain in $\RR^m$, and $N$ is a compact $C^2$ submanifold of $\RR^k$ with $\pi_0(N)=\pi_1(N)= \dots \pi_{[p]-1}(N)=0$, then any function $\eta \in W^{1-1/p,p}(\partial\Omega,N)$ admits an extension $\omega \in W^{1,p}(\Omega, N)$.\label{thm:W^1-1/p,p extension thm}
\end{theorem}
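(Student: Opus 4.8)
The plan is to follow the classical Hardt--Lin argument, which proceeds in two stages: first extend $\eta$ to a $W^{1,p}$ map into the ambient space $\RR^k$, and then deform that map onto $N$ while keeping its boundary trace fixed. For the first stage, the standard trace theory for fractional Sobolev spaces on $C^1$ domains provides a bounded linear extension operator $E$ with $E\eta\in W^{1,p}(\Omega,\RR^k)$ and $\operatorname{Tr}(E\eta)=\eta$; set $g:=E\eta$. Since $N$ is compact it lies in a ball $\bar B_R\subset\RR^k$, and post-composing $g$ with the $1$-Lipschitz nearest-point projection onto $\bar B_R$ we may assume $g(\Omega)\subset\bar B_R$. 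The difficulty is that $g$ need not take values anywhere near $N$.

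The second stage splits into a boundary part and an interior part. Near $\partial\Omega$, identify (via a $C^2$ tubular chart) a collar $\partial\Omega\times[0,\rho_0)$ inside $\bar\Omega$ and, on it, replace $g$ by the explicit Gagliardo-type extension $\tilde g(x',t):=(\eta*\varphi_t)(x')$, the mollification of $\eta$ at scale $t$. Each value of $\tilde g$ is an average of values of $\eta\in N$, and since $\eta\in W^{1-1/p,p}(\partial\Omega)$ small boundary balls have small mean oscillation on average; hence $\dist(\tilde g(\cdot,t),N)\to0$ in an $L^p$-averaged sense as $t\to0$. Fixing a tubular neighbourhood $N_\delta=\{y:\dist(y,N)<\delta\}$ with its smooth nearest-point retraction $\pi_N\colon N_\delta\to N$, a Fubini argument in the collar yields a small $t_0$ with $\tilde g(\cdot,t_0)\in W^{1-1/p,p}(\partial\Omega,N_\delta)$, so that $\pi_N\circ\tilde g$ is a well-defined $W^{1,p}$ map from $\partial\Omega\times[0,t_0]$ into $N$ with trace $\eta$ on $\partial\Omega\times\{0\}$. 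This reduces the problem to extending the map $\pi_N\circ\tilde g(\cdot,t_0)\in W^{1-1/p,p}(\partial\Omega',N)$ over the remaining Lipschitz region $\Omega':=\Omega\setminus(\partial\Omega\times[0,t_0))$.

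The interior step is where the topological hypotheses enter. Approximate the (interior) boundary datum by smooth $N$-valued maps --- legitimate by Bethuel's density theorem in its fractional form, which applies precisely because the trace exponent $s=1-1/p$ satisfies $sp=p-1$, with integer part $[p]-1$, and $\pi_{[p]-1}(N)=0$ --- and triangulate $\Omega'$ into small cubes with the boundary as a subcomplex. Extend over the skeleton cell by cell: over an $\ell$-cell with $\ell\le[p]$ the restriction of the already-constructed map to the cell's boundary $\cong\SSS^{\ell-1}$ is nullhomotopic in $N$ since $\pi_{\ell-1}(N)=0$, hence fills to a smooth --- thus bounded, thus $W^{1,p}$ --- map on the cell; over an $\ell$-cell with $\ell\ge[p]+1>p$ use the $0$-homogeneous (cone) extension from the boundary values, whose $W^{1,p}$ energy on an $\ell$-cell is finite precisely because $\ell>p$ (since $\int_0^1 r^{\ell-1-p}\,dr<\infty$). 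Summing the cell-by-cell $W^{1,p}$ bounds and gluing to the collar map produces $\omega\in W^{1,p}(\Omega,N)$ with trace $\eta$; for general $\eta$ one runs the whole construction for a smooth approximating sequence $\eta_j$ with uniform bounds $\|\omega_j\|_{W^{1,p}(\Omega)}\le C\|\eta_j\|_{W^{1-1/p,p}(\partial\Omega)}$ and passes to a (weak $W^{1,p}$, strong $L^p$) limit, which still takes values in $N$ almost everywhere and has trace $\eta$.

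The main obstacles are the quantitative points. One is genuinely controlling $\dist(\tilde g(\cdot,t),N)$ and the $W^{1,p}$ norm of $\pi_N\circ\tilde g$ on the collar in terms of $\|\eta\|_{W^{1-1/p,p}(\partial\Omega)}$. Another is matching the collar extension with the interior extension along the interface $\partial\Omega\times\{t_0\}$ in $W^{1-1/p,p}$, since the two pieces are built by different recipes; in practice one first proves the smooth case, where this matching is automatic, and only then upgrades by the approximation argument above. The cell-by-cell construction --- with the finiteness of the homogeneous-extension energy for $\ell>p$ and the successive use of $\pi_0(N),\dots,\pi_{[p]-1}(N)$ --- is the structural core of the proof and the precise place where the hypotheses $1<p<m$ and $\pi_j(N)=0$ for $j\le[p]-1$ are consumed.
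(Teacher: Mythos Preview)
The paper does not prove this theorem at all: it is quoted verbatim from \cite{HardtLin87} (Hardt--Lin) and used as a black box to deduce that $\mcU$ is nonempty. Your sketch is a reasonable outline of the classical Hardt--Lin argument and is in the right spirit, so there is nothing to compare against here.

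One remark on your write-up: you invoke Bethuel's density theorem in fractional form to approximate the boundary datum by smooth $N$-valued maps. Historically Hardt and Lin did not have that tool; their original argument works directly with a generic $\RR^k$-valued $W^{1,p}$ extension and projects it onto $N$ after removing a thin bad set via a Fubini/averaging argument over translates of a skeleton, rather than first approximating in the trace space. Your route is morally equivalent and perhaps cleaner to present, but if you want to match the cited reference you should phrase the interior step as ``project a generic linear extension onto $N$ off a controlled singular set'' rather than ``approximate, extend skeleton-by-skeleton, pass to the limit.''
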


We are now in a position to prove Proposition \ref{prop:nonemptyfspace}
\begin{proof}{[\bf Proposition \ref{prop:nonemptyfspace}]}
Let $\Omega$ be a $C^2$ domain such that $\partial \Omega$ is a surface of genus $k$. Then we have that
\begin{equation*}
\chi(\partial \Omega)=2(1-k).
\end{equation*}
We first show that the map $\varphi:B^2(0,1)\rightarrow \SSS^1$ defined by
\begin{equation*}
\varphi(x^1,x^2) = \left( \frac{-x^2}{\sqrt{(x^1)^2 +(x^2)^2}} , \frac{x^1}{\sqrt{(x^1)^2 +(x^2)^2}} \right)
\end{equation*}
is in $W^{1,p}(B^2(0,1),\SSS^1)$. Then $|\nabla\varphi|=\frac{1}{|x|}$ and we have:
thus 
\begin{align*}
\int_{B^2(0,1)}\abs{\nabla \varphi}^p dx &\leq C\int_0^1 r^{1-p} dr
\end{align*}
which is finite for $1 < p < 2$, hence $\varphi \in W^{1,p}(B^2(0,1),\SSS^1)$ for $1<p< 2$.

For $i=1 \dots |\chi(\partial \Omega)|$ let $(U_i,\psi_i)$ be coordinate patches on $\partial \Omega$ such that $\bigcap_{i=1}^{|\chi(\partial \Omega)|} U_i = \emptyset$.
Using
\begin{equation*}
\varphi(x^1,x^2) = \left( \frac{-x^2}{\sqrt{(x^1)^2 +(x^2)^2}} , \frac{x^1}{\sqrt{(x^1)^2 +(x^2)^2}} \right),
\end{equation*}
or
\begin{equation*}
 \tilde{\varphi}(x^1,x^2) = \left( \frac{x^2}{\sqrt{(x^1)^2 +(x^2)^2}} , \frac{-x^1}{\sqrt{(x^1)^2 +(x^2)^2}} \right)
\end{equation*}
we can put a unit vector field, $v_i$, on each $U_i$ such that the map $V : \bigcup U_i \rightarrow \SSS^2$ defined by
\begin{equation*}
v(x):= v_i(x) \text{ if } x \in U_i
\end{equation*} 
satisfies
\begin{equation*}
 ind\left(v\big|_{\partial \left(\cup U_i\right)} , \partial\_(\partial \Omega \setminus \cup U_i))[v]\right) = \chi(\partial \Omega).
\end{equation*}
Using Lemma \ref{lemma:vector field extention lemma} we can now extend $V$ to a vector field on $\partial \Omega$ that is smooth on $\partial \Omega \setminus \bigcup U_i$. Since $\varphi$ and $\tilde{\varphi}$ are both in $W^{1,p}( \partial \Omega,\SSS^2)$ for $1<p<2$ the map $V$ is in $W^{1,p}(\partial \Omega,\SSS^2)$. Then by Lemma \ref{lemma: W^1,p into H^1/2} we have that $V \in H^{\frac{1}{2}}(\partial \Omega,\SSS^2)$, it follows that $V \in \mathcal{T}$ and hence $\mathcal{T}$ is non empty. Using  Theorem \ref{thm:W^1-1/p,p extension thm} we can now conclude that the function space $\mathcal{T}$ is non empty.
\end{proof}
\subsection{Regularity of Minimizers}
We prove that a minimizer, $u$, of $\mathbb{G}$ in $\mathcal{U}$ is continuous on $\Omega\setminus Z$, where $Z$ is some subset of $\Omega$ with zero one-dimensional Hausdorff measure. We show that for $x \in \Omega\setminus Z$ the {\it rescaled energy}
\begin{equation*}
\mbbE_r[u]:=r^{-1} \int_{\Omega \cap C(a,r)} \abs{\nabla u}^2dx
\end{equation*}
decays suitably fast as $r$ tends to $0$. Continuity of $u$ then follows by Morrey's Lemma (see for example \cite{taheri2015function} Chapter 18). Our proof of the energy decay is based on the work of Schoen and Uhlenbeck \cite{UhlenbeckSchoen82} and the papers by Hardt and Lin \cite{HardtLin87}, \cite{HardtLin89}. 

We give a brief outline of the proof here. For a given domain $\Omega$ we construct a map $Q:\Omega \rightarrow SO(3)$ (depending only on $\partial\Omega$) such that for an arbitrary map $u \in \mathcal{U}$ we have that $Q(u(x))$ lies on the equator of $\SSS^2$ for almost all $x \in \partial \Omega$. This transforms our tangential boundary conditions into a partially constrained boundary condition as in \cite{HardtLin89}, so we can adapt some of the results there in order to prove a Hybrid inequality (Lemma \ref{lem:Hybrid Inequality}). The proof of the energy decay (Lemma \ref{lem:Energy Improvement} and Theorem \ref{Thm: energy decay}) is then done by contradiction. We assume there exists a sequence of minimizers, $u_i$, and domains, $\Omega_i$, that do not have energy decay but do satisfy $\varepsilon_i= \int_{\Omega_i} \abs{\nabla u_i}^2\,dx \rightarrow 0$ as $i \rightarrow \infty$. We then form the blow-up sequence $\varepsilon_i^{-1}(u_i- \dbar{u}_i)$ that converges weakly to a blow up function $v$. We then prove some estimates on $v$ by showing it is harmonic. These estimates are then transferred to the $u_i$ using the Hybrid inequality in order to get a contradiction.

\subsubsection{Scaling and notations}\label{subsec: scaling}
In order to apply Morrey's Lemma we must investigate how our energy scales. For $x \in \RR^3$ define the cylinder $C(x,r):= \{y \in \RR^3:\abs{(y^1,y^2)-(x^1,x^2)}<r , \abs{y^3 -x^3} <r\}$. Let 

\be\label{def:varphi}
\varphi \in C^2(\RR^2,\RR)\textrm{ with }\varphi(0) = 0 = \abs{\nabla \varphi(0)}, \,{\rm Lip}(\varphi)\le 1
\ee and define 
\begin{align*} 
\Omega_{\varphi}:&= \{(x^1,x^2,x^3) \in C(0,1) : x^3 < \varphi(x^1,x^2)\} \\
&= \{ x \in \RR^3 : \abs{(x^1,x^2)}<1 \text{ and } -1< x^3 < \varphi(x^1,x^2)\}.
\end{align*}

For a domain $\Omega\subset \RR^3, a \in \partial \Omega$ there exists $R>0$, $h \in SO(3)$ and $\varphi_{R,a} \in C^2(\RR^2,\RR)$ such that $\varphi_{R,a}(0)=0=\abs{\nabla \varphi_{R,a}(0)} , {\rm Lip}(\varphi_{R,a}) \leq 1$ and 
\begin{equation*}
\Omega_{\varphi_{R,a}} = \{h^{-1}[(y-a)/R] : y \in C(a,R) \cap \Omega\}.
\end{equation*} 
For $0<r \leq R$, let $\varphi_{r,a} = \varphi_{R,a}(\frac{rx}{R})$, then for $u \in \mcU$, the expression $n_{r,a}(x) = n[rh(x)+a]$ defines a function in $W^{1,2}(\Omega_{\varphi_{r,a}},\SSS^2)$ whose trace, $\gamma$, on $\partial \Omega_{\varphi_{r,a}}\setminus \partial C(0,1)$ satisfies $\gamma(x)\cdot (h\cdot\nu(x))+a) = 0$ almost everywhere. We note that 

 $$\mbbE_1(u_{r,a}) = \int_{\Omega_{\varphi}\cap C(0,1)}\abs{\nabla u_{r,a}(x)}^2 dx = r^{-1}\int_{\Omega_{\varphi} \cap C(a,r)} \abs{\nabla u}^2 dx$$

$${\rm Lip}(\varphi_{r,a})=\lip{\varphi_{R,a}}R^{-1}r$$

$$\liptwo{\varphi_{r,a}}:=\max_{\abs{\alpha} = 2} \sup_{x \in \RR^2}\abs{\frac{\partial^{|\alpha|} \varphi_{r,a}(x)}{\partial x^{\alpha}}} = R^{-2}r^2 \norm{\varphi_{R,a}}_{C^2}. $$
For convenience  we collect the notations of various domains  we will use
$$
\begin{array}{ll}
\Omega_{\varphi} := \{(x^1,x^2,x^3)\in C(0,1) : x^3 < \varphi(x^1,x^2)\} & \Omega_0 := \{(x^1,x^2,x^3)\in C(0,1) : x^3 < 0\} \\
 G_{\varphi} := \partial \Omega_{\varphi} \setminus \partial C(0,1) &
 G_{0} := \partial \Omega_{0} \setminus \partial C(0,1) \\ H_{\varphi} := \partial \Omega_{\varphi} \cap \partial C(0,1) &
H_{0} := \partial \Omega_{0} \cap \partial C(0,1) \\
\BB_r:=\{x\in\R^3; |x|\le r\} & \BB:=B^3(0,1)
\end{array}
$$
Using these notations we define the energy, $\mathbb{G}_\varphi$, for a map  $u \in W^{1,2}(\Omega_\varphi,\SSS^2)$   to be
\begin{equation*}
\mathbb{G}_{\varphi}[u] : = \int_{\Omega_{\varphi}} \abs{\nabla u}^2 \, dx - K_{13} \int_{G_\varphi} u^\alpha u^\beta \frac{\partial \nu^\beta }{\partial x^\alpha}\, dx.
\end{equation*}
\subsubsection{A Useful Projection}

Given $\varphi$ as above we will construct a map
$$Q:\Omega_{\varphi} \rightarrow SO(3)$$
such that if 
$$u \in \mcU_{\varphi}:=\{u \in W^{1,2}(\Omega_{\varphi},  \SSS^2) : {\rm Trace}(u) \in \mathcal{T}_{\varphi}\}$$
where 
$$\mathcal{T}_{\varphi} = \{\gamma \in H^{\frac{1}{2}}(G_{\varphi},\SSS^2): \gamma(x)\cdot \nu(x) = 0 \text{ almost everywhere }\}.$$
then the map $w(x):= Q(x)u(x)$ is in the function space
$$\mcE_{\varphi}=\{v \in W^{1,2} (\Omega_{\varphi}, \SSS^2) : {\rm Trace}(v) \in \tilde{\SSS}^1\},$$
where $\tilde{\SSS}^1 = \{(x^1,x^2,x^3)\in \SSS^2 : x^3= 0 \}$.

\bigskip We construct $Q$ as follows: let $x \in G_\varphi$ and let $\nu(x)$ be the unit norm to $G_\varphi$ at $x$. Define $Q(x)$ to be the rotation about the axis $\nu(x) \times (0,0,1)$ through the angle $\tau$ given by $\cos(\tau)= \nu(x)\cdot (0,0,1)$. Explicitly $Q$ is given by 

\begin{equation} \label{def: Q def }
Q(x) := \left[ \begin{matrix}
\dfrac{\varphi_{x^2}^2 + \varphi_{x^1}^2 \cos(\tau)}{\varphi_{x^1}^2+\varphi_{x^2}^2} & -\dfrac{\varphi_{x^1}\varphi_{x^2}(1-\cos(\tau))}{\varphi_{x^1}^2+\varphi_{x^2}^2} & \dfrac{\varphi_{x^1} \sin (\tau)}{(\varphi_{x^1}^2 + \varphi_{x^2}^2)^{\frac{1}{2}}} \\
-\dfrac{\varphi_{x^1}\varphi_{x^2}(1-\cos(\tau))}{\varphi_{x^1}^2+\varphi_{x^2}^2} & \dfrac{\varphi_{x^1}^2 + \varphi_{x^2}^2 \cos(\tau)}{\varphi_{x^1}^2+\varphi_{x^2}^2} & \dfrac{\varphi_{x^2} \sin (\tau)}{(\varphi_{x^1}^2 + \varphi_{x^2}^2)^{\frac{1}{2}}} \\
-\dfrac{\varphi_{x^1} \sin (\tau)}{(\varphi_{x^1}^2 + \varphi_{x^2}^2)^{\frac{1}{2}}} & -\dfrac{\varphi_{x^2} \sin (\tau)}{(\varphi_{x^1}^2 + \varphi_{x^2}^2)^{\frac{1}{2}}} & \cos(\tau) 
\end{matrix} \right],
\end{equation}
where $\varphi_{x_i} = \frac{\partial \varphi}{\partial x_i}$, $\cos(\tau)=(\varphi_{x^1}^2 + \varphi_{x^2}^2 + 1)^{-\frac{1}{2}}$ and $\sin(\tau)= \sqrt{\frac{\varphi_{x^1}^2 + \varphi_{x^2}^2}{\varphi_{x^1}^2 + \varphi_{x^2}^2 + 1}}$. Then for $x = (x^1,x^2,x^3) \in \Omega_\varphi$ define
\begin{equation*}
Q(x) = Q\big((x^1,x^2,\varphi(x^1,x^2))\big).
\end{equation*}
It is straightforward to check that $Q \in C^1(\Omega_\varphi, SO(3))$. We now find some bounds on the entries of $Q$ that will be useful throughout our proof.

\begin{lemma}\label{lem: Qn-n bound}
Let $Q, \varphi$ be defined as above, then we have
$$\sup \{\abs{Q(x)n-n}:x \in \Omega_{\varphi}, n \in \SSS^2 \} \leq 9 \lip{\varphi}.$$
\end{lemma}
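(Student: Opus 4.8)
The plan is to estimate $\abs{Q(x)n-n}$ uniformly in $n \in \SSS^2$ by the operator norm $\norm{Q(x) - \Id}$, and then bound that operator norm entry-by-entry using the explicit matrix \eqref{def: Q def }. First I would observe that $\abs{Q(x)n-n} \le \norm{Q(x)-\Id}_{\mathrm{op}} \le \norm{Q(x)-\Id}_{HS} = \big(\sum_{i,j}(Q_{ij}-\delta_{ij})^2\big)^{1/2}$, where the Hilbert–Schmidt norm is an easy overestimate (and the crude bound $\norm{A}_{HS} \le \sum_{i,j}\abs{A_{ij}}$ is even cheaper if I want to avoid square roots). So it suffices to show each of the nine entries of $Q(x)-\Id$ is controlled by a fixed multiple of $\lip{\varphi}$, with the multiples summing to at most $9$ (or slightly reorganising the constants).

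The key computational step is to introduce the shorthand $p := \abs{\nabla\varphi(x^1,x^2)} = (\varphi_{x^1}^2+\varphi_{x^2}^2)^{1/2}$, so that by \eqref{def:varphi} we have $p \le \lip{\varphi}$ pointwise, and note the trigonometric identities already recorded: $\cos\tau = (p^2+1)^{-1/2}$ and $\sin\tau = p(p^2+1)^{-1/2}$. Then I would bound the three types of entries:
\begin{itemize-like, but written as prose} \end{itemize-like}
Here it is cleaner to just list the estimates in text: the off-diagonal ``$\sin\tau$'' entries have the form $\pm \varphi_{x^i}\sin\tau / p = \pm \varphi_{x^i} (p^2+1)^{-1/2}$, whose absolute value is at most $\abs{\varphi_{x^i}} \le p \le \lip{\varphi}$; the entry $Q_{12}=Q_{21} = -\varphi_{x^1}\varphi_{x^2}(1-\cos\tau)/p^2$ satisfies $\abs{Q_{12}} \le (1-\cos\tau)$ since $\abs{\varphi_{x^1}\varphi_{x^2}} \le p^2$, and $1-\cos\tau = 1-(p^2+1)^{-1/2} \le p^2/2 \le \lip{\varphi}^2/2$, but more usefully $1-\cos\tau \le p\sin\tau$-type bounds or simply $1-\cos\tau \le p^2$, and for $\lip\varphi \le 1$ we can absorb a $p^2$ into $p$; for the diagonal entries, $Q_{33}-1 = \cos\tau - 1$ has modulus $1-\cos\tau \le p^2/2$, and $Q_{11}-1 = \varphi_{x^1}^2(\cos\tau-1)/p^2$, $Q_{22}-1 = \varphi_{x^2}^2(\cos\tau-1)/p^2$ each have modulus at most $1-\cos\tau \le p^2/2$. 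Summing: we get at most $2\lip\varphi$ (from the four $\sin\tau$ entries) plus roughly $4\cdot\tfrac12\lip\varphi^2 + \lip\varphi^2 \le 3\lip\varphi^2 \le 3\lip\varphi$ (using $\lip\varphi\le1$), comfortably below the stated $9\lip{\varphi}$; one should also treat the degenerate case $p=0$ separately, where $Q(x)=\Id$ and the bound is trivial (the apparent singularities $p^2$ in denominators cancel, so $Q$ extends continuously, as already asserted in the text).

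The only mild subtlety — and the place a reader might want care — is the behaviour near $p = 0$: the matrix entries are written with $\varphi_{x^1}^2+\varphi_{x^2}^2$ in denominators, so one must check that each ratio $\varphi_{x^i}^2/p^2$, $\varphi_{x^1}\varphi_{x^2}/p^2$, $\varphi_{x^i}/p$ stays bounded by $1$ and that the limit as $p\to0$ recovers $Q=\Id$; this is exactly the Cauchy–Schwarz bound $\abs{\varphi_{x^i}\varphi_{x^j}} \le \tfrac12(\varphi_{x^1}^2+\varphi_{x^2}^2)$ together with $1-\cos\tau \to 0$. Given that, the lemma is a routine bookkeeping exercise; I would present the entrywise bounds in a single aligned display and conclude with the summation. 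No blow-up scaling is needed here, only the hypothesis $\lip{\varphi}\le 1$ from \eqref{def:varphi} to convert the $\lip\varphi^2$ contributions into $\lip\varphi$.
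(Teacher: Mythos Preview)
Your approach is correct and essentially the same as the paper's: both reduce to bounding the entries of $Q(x)-\Id$ via the explicit formula \eqref{def: Q def } together with $|\nabla\varphi|\le\lip{\varphi}\le 1$; the paper merely packages this as a bound on each component $|(Q(x)n-n)_i|\le 3\lip{\varphi}$ (using $|n_j|\le 1$) rather than passing through $\|Q-\Id\|_{HS}$. One small slip in your bookkeeping: the four $\sin\tau$ entries sum to $2(|\varphi_{x^1}|+|\varphi_{x^2}|)/\sqrt{p^2+1}\le 2\sqrt{2}\,p$, not $2p$, but the total is still comfortably under $9\lip{\varphi}$.
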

\begin{proof}
Let $x \in G_\varphi$ and $n\in\SSS^2$. We have $\abs{Q(x)n - n} \leq \sum_{i=1}^3\abs{(Q(x)n-n)_i}$, where $(Q(x)n-n)_i$ is the $i^{th}$ component of $Q(x)n - n$. We find bounds on each component individually.
$$\abs{(Q(x)n-n)_1}= \abs{ \left(\frac{\varphi_{x^2}^2 + \varphi_{x^1}^2\cos(\tau) - \varphi_{x^1}^2 - \varphi_{x^2}^2}{\varphi_{x^1}^2 + \varphi_{x^2}^2}\right)n_1 - \frac{\varphi_{x^1}\varphi_{x^2}(1 - cos(\tau))n_2}{\varphi_{x^1}^2 + \varphi_{x^2}^2} + \varphi_{x_1} \cos(\tau)n_3 } $$
Writing $\varphi_{x^1}(x) = r \cos(t), \varphi_{x^2}(x) = r \sin(t)$ for appropriate $r \in (0,\infty),t \in [0,2\pi)$ we have
\begin{align*}
\abs{(Q(x)n-n)_1} &\leq \abs{\frac{r^2 \cos^2(t)\left( (r^2 +1)^{-\frac{1}{2}} - 1 \right)}{r^2}} + \abs{\frac{r^2\cos(t)\sin(t) \left( 1 - (r^2 +1)^{-\frac{1}{2}} \right)}{r^2} } +\abs{r} \\
%&\leq 2\frac{(r^2 + 1)^{\frac{1}{2}}-1{(r^2 + 1)^{\frac{1}{2}}} + r \\
&\leq 2r +r \\
&\leq 3\lip{\varphi}
\end{align*}
We can get a similar estimate for $(Q(x)n-n)_2$ and $(Q(x)n-n)_3$, hence
$$\abs{Q(x)n-n} \leq 9 \lip{\varphi}.$$
\end{proof}

\begin{lemma}\label{lem: bound on dQ/dx}
$$\sup \left\{\abs{\frac{\partial Q_{ij}(x)}{\partial x^k}}: i,j,k \in \{1,2,3\},x \in \Omega_\varphi\right\} < 6 \liptwo{\varphi} $$ 
\end{lemma}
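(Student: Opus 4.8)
The statement reduces to a direct, if lengthy, computation, and the plan is as follows. First, observe from the definition of $Q$ that for $x=(x^1,x^2,x^3)\in\Omega_\varphi$ the matrix $Q(x)$ depends on $x$ only through the pair $\bigl(\varphi_{x^1}(x^1,x^2),\varphi_{x^2}(x^1,x^2)\bigr)$; in particular $\partial Q_{ij}/\partial x^3\equiv 0$, so $k=3$ is trivial. Writing $\widehat Q_{ij}(p,q)$ for the entry $Q_{ij}$ viewed as a function of $p=\varphi_{x^1}$ and $q=\varphi_{x^2}$, the chain rule gives for $k\in\{1,2\}$
\begin{equation*}
\frac{\partial Q_{ij}}{\partial x^k}=\frac{\partial \widehat Q_{ij}}{\partial p}\,\varphi_{x^1x^k}+\frac{\partial \widehat Q_{ij}}{\partial q}\,\varphi_{x^2x^k},
\end{equation*}
where the partials of $\widehat Q_{ij}$ are evaluated at $(\varphi_{x^1},\varphi_{x^2})$. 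Since $|\varphi_{x^ix^k}|\le\liptwo{\varphi}$, it suffices to bound $|\partial_p\widehat Q_{ij}|$ and $|\partial_q\widehat Q_{ij}|$ each strictly by $3$ on the range of $(\varphi_{x^1},\varphi_{x^2})$; and as $\lip{\varphi}\le 1$ we have $\varphi_{x^1}^2+\varphi_{x^2}^2=|\nabla\varphi|^2\le 1$, so this range lies in the closed unit disc $D:=\{p^2+q^2\le1\}$.

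The second step is to rewrite the entries so as to cancel the apparent singularity at $p=q=0$, which simultaneously exhibits each $\widehat Q_{ij}$ as a real-analytic (hence $C^1$) function of $(p,q)$ near $D$, justifying the chain rule used above. With $s:=p^2+q^2$, $c:=\cos\tau=(1+s)^{-1/2}$ and $\sin\tau=\sqrt{s}\,c$, a short manipulation of \eqref{def: Q def } gives
\begin{equation*}
\widehat Q_{33}=c,\qquad \widehat Q_{13}=-\widehat Q_{31}=pc,\qquad \widehat Q_{23}=-\widehat Q_{32}=qc,
\end{equation*}
and, with $g(s):=\bigl((1+s)^{-1/2}-1\bigr)/s$ and $g(0):=-\tfrac12$,
\begin{equation*}
\widehat Q_{11}=1+p^2g(s),\qquad \widehat Q_{22}=1+q^2g(s),\qquad \widehat Q_{12}=\widehat Q_{21}=pq\,g(s).
\end{equation*}
I would then record once and for all the elementary bounds, valid on $[0,1]$ for $s$ and on $D$ for $(p,q)$: $0<c\le1$, $-\tfrac12\le g(s)\le0$, $0<g'(s)\le\tfrac38$ (from $g'(s)=s^{-2}\int_0^s\tfrac34 t(1+t)^{-5/2}\,dt$), $|p|(1+s)^{-3/2}\le1$, $|pq|\le\tfrac12$ and $|p^2q|\le\tfrac{2}{3\sqrt3}$.

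The last step is to differentiate the six inequivalent entries, using $\partial_p s=2p$, $\partial_q s=2q$ and $c'(s)=-\tfrac12(1+s)^{-3/2}$; for example
\begin{equation*}
\partial_p\widehat Q_{13}=c-p^2(1+s)^{-3/2},\qquad \partial_q\widehat Q_{13}=-pq(1+s)^{-3/2},\qquad \partial_p\widehat Q_{33}=-p(1+s)^{-3/2},
\end{equation*}
\begin{equation*}
\partial_p\widehat Q_{11}=2pg(s)+2p^3g'(s),\qquad \partial_p\widehat Q_{12}=qg(s)+2p^2q\,g'(s),\qquad \partial_q\widehat Q_{12}=pg(s)+2pq^2g'(s),
\end{equation*}
and the remaining derivatives follow by the symmetry $p\leftrightarrow q$ and sign changes. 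Each is bounded on $D$ by $2$ via the recorded estimates (e.g. $|\partial_p\widehat Q_{11}|\le 2\cdot\tfrac12+2\cdot\tfrac38=\tfrac74$, $|\partial_p\widehat Q_{13}|\le1$), so substituting into the chain-rule identity yields $|\partial Q_{ij}/\partial x^k|\le\bigl(|\partial_p\widehat Q_{ij}|+|\partial_q\widehat Q_{ij}|\bigr)\liptwo{\varphi}<6\,\liptwo{\varphi}$, as required.

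There is no conceptual obstacle; the only real work is the bookkeeping — correctly establishing the closed forms for the nine entries of $Q$ and then running the dozen-odd scalar estimates on $D$ without arithmetic slips in the constants. I would keep this manageable by treating $\widehat Q_{13},\widehat Q_{33}$ (linear in $c$) and $\widehat Q_{11},\widehat Q_{12}$ (built from $g$ and $g'$) as the two model cases and reducing all other entries to them by the $p\leftrightarrow q$ symmetry and harmless sign changes.
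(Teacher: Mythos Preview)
Your proposal is correct and follows essentially the same route as the paper --- a direct entry-by-entry computation --- but your organization is cleaner: by first applying the chain rule to factor out the second derivatives of $\varphi$ and then bounding the universal quantities $\partial_p\widehat Q_{ij}$, $\partial_q\widehat Q_{ij}$ on the unit disc, you avoid the repeated bookkeeping the paper does when it differentiates each $Q_{ij}$ in $x$ directly and substitutes polar coordinates $\varphi_{x^1}=r\cos t$, $\varphi_{x^2}=r\sin t$. Your use of $g(s)=\bigl((1+s)^{-1/2}-1\bigr)/s$ to desingularize the upper-left block and the integral representation for $g'$ also makes transparent something the paper leaves implicit, namely that $Q$ is $C^1$ across $\nabla\varphi=0$; the paper's polar substitution obscures this point.
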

\begin{proof}
Let $x \in \Omega_\varphi$, we find bounds on each $\frac{\partial Q_{ij}(x)}{\partial x^k}$, write $\varphi_{x^1}(x) = r \sin(t), \varphi_{x^2}(x)= r \cos(t)$.
\begin{align*}
\abs{\frac{\partial Q_{11}(x)}{\partial x^k}} &= \abs{\frac{\varphi_{x^1}^2\frac{\partial cos(\tau)}{\partial x^k}}{(\varphi_{x^1}^2+\varphi_{x^2}^2)}  + 
2\left[ \frac{\varphi_{x^2 x^k}\varphi_{x^1}^2 \varphi_{x^2}(1 - \cos(\tau)) + \varphi_{x^1 x^\alpha} \varphi_{x^1} \varphi_{x^2}^2(\cos(\tau) - 1) }{(\varphi_{x^1}^2 + \varphi_{x^2}^2)^2} \right]
} \\
&\leq \abs{ \frac{\varphi_{x^1}^2(\varphi_{x^1}\varphi_{x^1x^k} + \varphi_{x^2}\varphi_{x^2x^k})}{(\varphi_{x^1}^2+\varphi_{x^2}^2)(\varphi_{x^1}^2+\varphi_{x^2}^2+1)^\frac{3}{2}}} + 2 \liptwo{\varphi} \left[ \abs{\frac{\sin^2(t)\cos(t)(1 - \cos(\tau))}{r} } \right.  \\ &\qquad \qquad + \left. \abs{ \frac{\sin(t)\cos(t)(1 - cos(\tau))}{r}} \right] \\
&\leq 2 \liptwo{\varphi}\lip{\varphi} + 4 \liptwo{\varphi}\lip{\varphi}
\end{align*}
By the symmetry of $Q$, $\frac{\partial Q_{22}}{\partial x^k}$ will have the same bound. 

Next, bounding $\frac{\partial Q_{12}}{\partial x^k}$;
\begin{align*}
\abs{\frac{\partial Q_{12}}{\partial x^k} } 
&= 
\left| \frac{(\varphi_{x^1 x^k} \varphi_{x^2} + \varphi_{x^1}\varphi_{x^2x^k})(\cos(\tau)-1)}{\varphi_{x^1}^2 + \varphi_{x^2}^2} + 2\frac{\varphi_{x^1}\varphi_{x^2}(\varphi_{x^1}\varphi_{x^1x^k} + \varphi_{x^2}\varphi_{x^2x^k})(1 - cos(\tau))}{(\varphi_{x^1}^2 + \varphi_{x^2}^2)^2} \right.  \\  &\qquad \qquad \left. -  \frac{\varphi_{x^1}\varphi_{x^2}(\varphi_{x^1}\varphi_{x^1x^k} + \varphi_{x^2}\varphi_{x^2x^k})}{(\varphi_{x^1}^2 + \varphi_{x^2}^2)(\varphi_{x^1}^2 + \varphi_{x^2}^2 + 1)^{\frac{3}{2}}} \right| \\
&\leq \liptwo{\varphi}r^2\left( \abs{\frac{r\cos(t)}{r^2}} + \abs{\frac{r\sin(t)}{r^2} } \right) + 2r^4\liptwo{\varphi}\left( \abs{\frac{r\cos(t)}{r^4}} + \abs{\frac{r\sin(t)}{r^4} } \right) \\
&\leq 6\liptwo{\varphi}\lip{\varphi} \\
&\leq 6\liptwo{\varphi}.
\end{align*}
Again, by the symmetry of $Q$, $\frac{\partial Q_{21}}{\partial x^k}$ will have the same bounds.
We have 
\begin{align*}
\abs{\frac{\partial Q_{13}}{\partial x^k} } &= \abs{ \frac{\varphi_{x^1 x^k}}{(\varphi_{x^1}^2 + \varphi_{x^2}^2+1)^{\frac{1}{2}}} - \frac{\varphi_{x^1}(\varphi_{x^1} \varphi_{x^1x^k} + \varphi_{x^2}\varphi_{x^2x^k})}{(\varphi_{x^1}^2 + \varphi_{x^2}^2+1)^{\frac{3}{2}} }} \\
&\leq \liptwo{\varphi} + 2 \liptwo{\varphi}\lip{\varphi}^2 \\ 
&\leq 3\liptwo{\varphi}
\end{align*}
and $\frac{\partial Q_{23}}{\partial x^k}, \frac{\partial Q_{31}}{\partial x^k}, \frac{\partial Q_{32}}{\partial x^k}$ will have the same bounds. Finally,
\begin{align*}
\abs{\frac{\partial Q_{33}}{\partial x^k}} &= \abs{ \frac{\varphi_{x^1}\varphi_{x^1x^k} + \varphi_{x^2}\varphi_{x^2x^k}}{(\varphi_{x^1}^2 + \varphi_{x^2}^2 + 1)^{\frac{3}{2}}} } \\
&\leq 2 \lip{\varphi}\liptwo{\varphi} \\
&\leq 2 \liptwo{\varphi}.
\end{align*}
This gives the result with $K=6$.
\end{proof}

\begin{lemma}\label{lem: bound on Q_ij}
\begin{equation*}
\sup \{ Q_{ij}(x) : i,j \in \{1,2,3\} \text{ and } x \in \Omega_{\varphi} \} = 1
\end{equation*}
\end{lemma}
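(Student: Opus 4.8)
The plan is to prove the two inequalities $\sup\{Q_{ij}(x):i,j\in\{1,2,3\},\,x\in\Omega_\varphi\}\le 1$ and $\sup\{Q_{ij}(x):i,j\in\{1,2,3\},\,x\in\Omega_\varphi\}\ge 1$ separately. For the upper bound I would simply use that $Q(x)\in SO(3)$ is an orthogonal matrix, so each of its rows is a unit vector; hence $Q_{ij}(x)^2\le\sum_{k=1}^3 Q_{ik}(x)^2=1$, which gives $Q_{ij}(x)\le|Q_{ij}(x)|\le 1$ for every pair of indices and every $x\in\Omega_\varphi$, and therefore the supremum is at most $1$. Note that this uses nothing about the explicit form of $Q$ beyond $Q(x)\in SO(3)$, which was already checked right after \eqref{def: Q def }.

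For the lower bound I would exhibit points of $\Omega_\varphi$ where an entry of $Q$ equals $1$. The natural candidate is the $(3,3)$ entry, which by \eqref{def: Q def } is $Q_{33}(x)=\cos(\tau(x))=\big(\varphi_{x^1}(x^1,x^2)^2+\varphi_{x^2}(x^1,x^2)^2+1\big)^{-1/2}$; unlike the off-diagonal entries and $Q_{11},Q_{22}$, this expression involves no $\varphi_{x^1}^2+\varphi_{x^2}^2$ in a denominator, so there is nothing to check about removable singularities. Since $\varphi$ satisfies \eqref{def:varphi}, we have $\nabla\varphi(0,0)=0$, and because $\varphi(0,0)=0$ the whole segment $\{(0,0,x^3):-1<x^3<0\}$ lies in $\Omega_\varphi$. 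On that segment $Q_{33}\equiv\big(0+0+1\big)^{-1/2}=1$, so the supremum is at least $1$. Combining the two bounds yields the claim.

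I do not expect a genuine obstacle here: the statement is elementary once one recalls that entries of orthogonal matrices are bounded by $1$ in modulus. The only points deserving a sentence of care are (i) that the segment $\{(0,0,x^3):-1<x^3<0\}$ really lies inside the open set $\Omega_\varphi$, which is immediate from $\varphi(0)=0$; and (ii) that one should base the equality half on $Q_{33}$ (or, equivalently, invoke the already-noted fact that $Q$ extends to a $C^1$ map on $\Omega_\varphi$ equal to the identity wherever $\nabla\varphi$ vanishes) rather than wrestle with the singular-looking denominators in the remaining entries of \eqref{def: Q def }.
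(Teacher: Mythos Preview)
Your proof is correct and, for the upper bound, cleaner than the paper's. The paper bounds each entry $|Q_{ij}(x)|\le 1$ by hand from the explicit formula \eqref{def: Q def } (estimating $|Q_{11}|$, $|Q_{12}|$, $|Q_{13}|$, $|Q_{33}|$ individually and appealing to symmetry for the rest), whereas you invoke $Q(x)\in SO(3)$ once to get the same conclusion for free. Both approaches are valid; yours is more conceptual and avoids the $0/0$-type expressions in \eqref{def: Q def } entirely.

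For the lower bound, the paper simply observes $Q_{11}(0)=1$, while you use $Q_{33}\equiv 1$ along the segment $\{(0,0,x^3):-1<x^3<0\}\subset\Omega_\varphi$. Your version is slightly more careful, since the origin itself lies on $G_\varphi$ rather than in the open set $\Omega_\varphi$; because $Q$ depends only on $(x^1,x^2)$ the distinction is immaterial, but you were right to address it.
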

\begin{proof}
First we note that $Q_{11}(0) = 1$ and so $1 \leq \sup\{ Q_{ij}(x) : i,j \in \{1,2,3\}, x \in \Omega_{\varphi} \}$. Next we calculate
\begin{equation*}
\abs{Q_{11}(x)}\leq \frac{\varphi^2_{x^2}}{\varphi^2_{x^1} + \varphi^2_{x^2}} + \frac{\varphi^2_{x^1}}{\varphi^2_{x^1} + \varphi^2_{x^2}} = 1
\end{equation*} 

\begin{equation*}
\abs{Q_{12}(x)}\leq \abs{ \frac{\varphi_{x^2}\varphi_{x^1}(1 - \cos(\tau))}{\varphi^2_{x^1} + \varphi^2_{x^2}} } \leq \abs{ \frac{2\varphi_{x^2}\varphi_{x^1}}{\varphi^2_{x^1} + \varphi^2_{x^2}} }\leq 1 
\end{equation*} 
\begin{equation*}
\abs{Q_{13}(x)}\leq \abs{\frac{\varphi_{x^1}}{(\varphi^2_{x^1} + \varphi^2_{x^2} + 1)^{\frac{1}{2}}} }\leq \abs{\varphi_{x^1}} \leq 1 
\end{equation*} 
\begin{equation*}
\abs{Q_{33}(x)} = \abs{\cos(\tau)} \leq 1
\end{equation*}
The other terms are similar or the same.
\end{proof}

\subsubsection{Proof of Partial Regularity}

We will use the projection $Q$ and the following extension Lemma from \cite{HardtLin89} in order to prove a Hybrid Inequality. Note that the following lemma is stated for balls rather than cylinders.

\begin{lemma}\label{lem: exten lem}
There are positive constants $\delta,q$ and $c$ such that, if $0 <\varepsilon <1$, $\xi \in \RR^3$, and $\eta \in W^{1,2}(\Omega_{\varphi} \cap \partial \BB,\SSS^2)$ satisfies the small oscillation condition 
\begin{equation}\label{eq:small oscillation}
\left[ \int_{\Omega_{\varphi} \cap \partial \BB}\abs{\nabla_{tan}\eta}^2 d \mcH^2 \right]\left[\int_{\Omega_{\varphi} \cap \partial \BB}\abs{\eta - \xi}^2 d \mcH^2 + \int_{\partial( \Omega_{\varphi} \cap \partial \BB)}\abs{\eta - \xi}^2 d \mcH^1 \right] \leq \delta^2 \varepsilon^q
\end{equation}
and if $\left. \eta\right|_{\partial \Omega_{\varphi} \cap \partial \BB}$ has image in $\SSS^1$, then there exists a function $\omega \in W^{1,2}(\Omega_{\varphi}\cap \BB,\SSS^2)$, $\left.\omega\right|_{\Omega_{\varphi} \cap \partial \BB}= \eta$,$\left. \omega \right|_{G_{\varphi}}$ has image in $\SSS^1$ and 
$$\int_{\Omega_{\varphi}\cap \BB}\abs{\nabla \omega}^2 dx \leq \varepsilon \int_{\Omega_{\varphi} \cap \partial \BB} \abs{\nabla_{tan}\eta }^2 d \mcH^2 + c \varepsilon^{-q}\left[ \int_{\Omega_{\varphi} \cap \partial \BB}\abs{\eta - \xi}^2 d \mcH^2 + \int_{\partial (\Omega_{\varphi} \cap \partial \BB)}\abs{\eta - \xi}^2 d \mcH^1 \right]. $$
\end{lemma}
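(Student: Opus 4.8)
The plan is to reduce the statement, by a controlled bi-Lipschitz change of variables followed by a reflection, to a constraint-free extension problem on the unit ball, and then to run the classical Schoen--Uhlenbeck/Hardt--Lin ``shell'' construction, which is what produces the asymmetric weights $\varepsilon$ and $\varepsilon^{-q}$. First I would flatten: since $\lip{\varphi}\le 1$ and $\varphi(0)=|\nabla\varphi(0)|=0$, the map $(x^1,x^2,x^3)\mapsto(x^1,x^2,x^3-\varphi(x^1,x^2))$ is bi-Lipschitz with universal constants, and composing it with a further bi-Lipschitz chart I may assume $\Omega_\varphi\cap\BB$ is the lower half-ball $\BB^+:=\BB\cap\{x^3<0\}$, with $G_\varphi$ the flat face $D:=\BB\cap\{x^3=0\}$, $\Omega_\varphi\cap\partial\BB$ the lower hemisphere $S$, and $\Gamma:=\partial\Omega_\varphi\cap\partial\BB$ the equator. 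Under such maps the Dirichlet and $L^2$ quantities appearing in \eqref{eq:small oscillation} and in the conclusion change only by fixed multiplicative constants, so --- after harmlessly rescaling $\varepsilon$ by a constant --- it suffices to treat this model geometry; this is also where the ball-versus-cylinder mismatch flagged in the Remark is absorbed, balls and cylinders being bi-Lipschitz with controlled constants for a nearly flat graph.

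Next I would reflect to remove the $\SSS^1$ constraint. Let $R\in O(3)$ be $R(y^1,y^2,y^3)=(y^1,y^2,-y^3)$, so $\mathrm{Fix}(R)\cap\SSS^2=\SSS^1$. Because $\eta$ is $\SSS^1$-valued on $\Gamma$, the map $\tilde\eta$ on $\partial\BB$ equal to $\eta$ on $S$ and to $R\circ\eta\circ(\text{reflection across }\{x^3=0\})$ on the upper hemisphere lies in $W^{1,2}(\partial\BB,\SSS^2)$, is $R$-equivariant, and satisfies $\int_{\partial\BB}|\nabla_{tan}\tilde\eta|^2\le C\int_S|\nabla_{tan}\eta|^2$; choosing $\tilde\xi\in\SSS^1$ within distance $\le C(\int_\Gamma|\eta-\xi|^2)^{1/2}$ of $\xi$ (possible because $\eta|_\Gamma\subset\SSS^1$) gives $\int_{\partial\BB}|\tilde\eta-\tilde\xi|^2\le C(\int_S|\eta-\xi|^2+\int_\Gamma|\eta-\xi|^2)$, so \eqref{eq:small oscillation} becomes $[\int_{\partial\BB}|\nabla_{tan}\tilde\eta|^2]\,[\int_{\partial\BB}|\tilde\eta-\tilde\xi|^2]\le C\delta^2\varepsilon^q$. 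If I can build an $R$-equivariant extension $\tilde\omega\in W^{1,2}(\BB,\SSS^2)$ of $\tilde\eta$ with $\int_\BB|\nabla\tilde\omega|^2\le\varepsilon\int_{\partial\BB}|\nabla_{tan}\tilde\eta|^2+c\,\varepsilon^{-q}\int_{\partial\BB}|\tilde\eta-\tilde\xi|^2$, then $\omega:=\tilde\omega|_{\BB^+}$ is $\SSS^1$-valued on $D$ by equivariance, restricts to $\eta$ on $S$, has the required bound, and pulling back through the flattening finishes the proof. Since all the building blocks below respect the $R$-action when $\tilde\xi\in\SSS^1$, equivariance is automatic.

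For the core extension on $\BB$, on the shell $\{1-\varepsilon<|x|<1\}$ I would take $\tilde\omega(x)=\Pi_{\SSS^2}\big((1-\chi)\,\tilde\eta(x/|x|)+\chi\,\tilde\xi\big)$, where $\Pi_{\SSS^2}$ is the nearest-point projection and $\chi(x)=\chi_0\!\big(\tfrac{1-|x|}{\varepsilon}\big)\,\beta\big(|\tilde\eta(x/|x|)-\tilde\xi|\big)$ with $\chi_0:[0,1]\to[0,1]$ ramping $0\to1$ and $\beta$ equal to $1$ near $0$ and vanishing past $\tfrac12$; the role of $\beta$ is to switch the interpolation off precisely where $\tilde\eta(x/|x|)$ is far from $\tilde\xi$, so that the argument of $\Pi_{\SSS^2}$ stays in its tubular neighbourhood and no gluing interface arises. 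The $0$-homogeneous (radial) part contributes at most $\varepsilon\int_{\partial\BB}|\nabla_{tan}\tilde\eta|^2$ (the homogeneity-zero extension over a shell of thickness $\varepsilon$), the radial derivative of $\chi$ costs $\lesssim\varepsilon^{-1}\int_{\partial\BB}|\tilde\eta-\tilde\xi|^2$, and the remaining terms are absorbed into these; this fixes $q=1$ for the shell. At $|x|=1-\varepsilon$ the resulting inner datum equals the constant $\tilde\xi$ outside the ``bad'' set $E:=\{\omega'\in\partial\BB:|\tilde\eta(\omega')-\tilde\xi|>\tfrac14\}$, with $\mathcal{H}^2(E)\lesssim\int_{\partial\BB}|\tilde\eta-\tilde\xi|^2$ by Chebyshev, and on the rest it is an $\SSS^2$-valued map agreeing with $\tilde\xi$ on $\partial E$. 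It remains to extend this datum into $\BB_{1-\varepsilon}$: set $\tilde\omega\equiv\tilde\xi$ outside the solid cone over $E$, and over $E$ cap it off by covering $E$ with small geodesic balls $B_{\rho_i}$ with $\sum\rho_i^2\lesssim\mathcal{H}^2(E)$ and filling each associated cone by a homogeneity-zero map, inserting a bounded number of point singularities if needed to absorb the local degree.

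The main obstacle is this final inner filling. Its energy is controlled by $\int_E|\nabla_{tan}\tilde\eta|^2$ together with bounded bubble contributions, and while $E$ has small \emph{measure}, it could a priori carry an uncontrolled fraction of the total Dirichlet energy, so there is no reason for this to fit under $c\,\varepsilon^{-q}\int_{\partial\BB}|\tilde\eta-\tilde\xi|^2$ on its own. This is exactly the purpose of the unusual product form of \eqref{eq:small oscillation}: a large $\int|\nabla_{tan}\tilde\eta|^2$ forces $\int|\tilde\eta-\tilde\xi|^2$, hence $\mathcal{H}^2(E)$, to be correspondingly tiny, and tracking how the exponent $q$ and the various constants balance --- as is done in \cite{HardtLin89} --- is what makes the filling cost acceptable. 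Since $W^{1,2}$ of a surface does not embed in $L^\infty$, the map $\tilde\eta$ need not be pointwise near $\tilde\xi$ even when $\int|\tilde\eta-\tilde\xi|^2$ is small, and it is only through this product hypothesis that one can guarantee it is ``mostly'' so, which is all the construction needs; the flattening, the reflection and the shell estimate are otherwise routine.
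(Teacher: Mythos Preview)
The paper does not give its own proof of this lemma: its entire proof is the single sentence ``For a proof see Theorem 3.1 in \cite{HardtLin89}.'' So there is no argument in the paper to compare your proposal against; what you have written is, in effect, a sketch of the Hardt--Lin proof being cited.

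Your outline is faithful to that source. The three steps --- bi-Lipschitz flattening to a half-ball, doubling by the reflection $R(y)=(y^1,y^2,-y^3)$ to convert the $\SSS^1$ constraint into $R$-equivariance, and then the Schoen--Uhlenbeck/Luckhaus shell construction with a cutoff interpolating toward the reference point $\tilde\xi$ --- are exactly the architecture of the Hardt--Lin argument for partially constrained boundaries. You have also correctly identified the crux: the inner filling over the bad set $E$ cannot be paid for by the $L^2$ oscillation term alone, and it is precisely the \emph{product} form of \eqref{eq:small oscillation} that makes the bookkeeping close. Your own sketch defers this bookkeeping back to \cite{HardtLin89}, which is honest but means your write-up, like the paper's, ultimately rests on that reference for the quantitative step determining $q$ and $c$. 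If you wanted a self-contained proof you would need to carry out that balance explicitly; otherwise your proposal and the paper's treatment amount to the same thing.
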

\begin{proof}
For a proof see Theorem 3.1 in \cite{HardtLin89}.
\end{proof}

\bigskip We can now prove our main ingredient:

\begin{boxlemma}{\bf [Hybrid inequality]}\label{lem:Hybrid Inequality}
There exists positive constants $c_1,c_2$ and $q$ such that if $0 \leq \lambda \leq 1$, $\varphi$ is as above, $u$ is a minimizer of $\mathbb{G}_\varphi$ amongst maps in $\mcU_{\varphi}$ with fixed trace on $H_{\varphi}$ such that $\int_{\Omega_{\varphi}} \abs{\nabla u}^2 \, dx < c_1 \lambda^{\frac{q}{2}}$, $ \lip{\varphi} \leq c_1\lambda^{\frac{q}{2}}$ and $ \liptwo{\varphi} \leq c_1\lambda^{\frac{q}{2}}$, then 
\begin{align*}
\left(\frac{1}{2}\right)^{-1} \int_{\Omega_\varphi \cap C\left(0,\frac{1}{4}\right)}\abs{\nabla u }^2 \, dx \leq &\lambda \left( \liptwo{\varphi}^2+ \int_{\Omega_{\varphi}\cap C(0,1)} \abs{\nabla u }^2\, dx \right) \non\\
&+ \lambda^{-q}c_2 \left( \lip{\varphi}^2 + \int_{\Omega_\varphi\cap C(0,1)}\abs{u -\dbar{u}}^2 \,dx\right)\\ 
& + \lambda^{-q}c_2\int_{G_{\varphi}\cap C(0,1)}\abs{u - \dbar{u}}^2 \, d \mcH^2  +   c_2 \liptwo{\varphi}^2.
\end{align*}   where $\dbar{u} := \mcH^2(G_{\varphi})^{-1} \int_{G_\varphi}u d\mcH^2$.
\end{boxlemma}
\begin{proof}
As we have the set inclusions $C\left( 0,\frac{1}{4}\right)\subset B^3(0,\frac{1}{2})$ and $\BB \subset C(0,1)$ it suffices to prove the inequality
\begin{align*}
\left(\frac{1}{2}\right)^{-1} \int_{\Omega_\varphi \cap \BB_{\frac{1}{2}}}\abs{\nabla u }^2 \, dx \leq &\lambda \left( \liptwo{\varphi}^2
+ \int_{\Omega_{\varphi}\cap \BB} \abs{\nabla u }^2\, dx \right) \\
&+ \lambda^{-q}c_2 \left( \lip{\varphi}^2 + \int_{\Omega_\varphi\cap \BB}\abs{u -\dbar{u}}^2 \,dx\right)\\
&  + \lambda^{-q}c_2 \int_{G_{\varphi}\cap \BB}\abs{u - \dbar{u}}^2 \, d \mcH^2 +   c_2 \liptwo{\varphi}^2.
\end{align*}

Let $u$ be a map satisfying the assumptions of the Lemma. We aim to apply Lemma \ref{lem: exten lem} to the map $Q(x)u(x)$. In order to do this we must check the map $Q(x)u(x)$ satisfies the small osculation condition \eqref{eq:small oscillation}. We first bound
\begin{align}
\abs{\nabla(Q(x)u(x))}^2 &= \sum_{i,k,l = 1}^3 \left( \frac{\partial (Q_{ij}u^j)}{\partial x^k}\right)^2 \notag \\
&= \sum_{i,j,k}\left( \frac{\partial Q_{ij}}{\partial x^k}u^j + \frac{\partial u^j}{\partial x^k}Q_{i,j}\right)^2 \notag \\
&\leq 2 \sum_{i,j,k} \left[ \left(\frac{\partial Q_{ij}}{\partial x^k}u^j\right)^2 + \left( \frac{\partial u^j}{\partial x^k}Q_{ij}\right)^2\right] \notag \\
&\leq 2\sum_{i,k} \left[ \sum_{j=1}^3\left(\frac{\partial Q_{i,j}}{\partial x^k} \right)^2\sum_{l=1}^3(u^l)^2 +\sum_{m=1}^3\left( \frac{\partial u^m}{\partial x^k}\right)^2\sum_{n=1}^3(Q_{in})^2 \right] \notag \\
&\leq 18 \left( \liptwo{\varphi}^2 + \abs{\nabla u }^2 \right) \label{ineq:bound on nabla Qu}
\end{align}
by Lemma \ref{lem: bound on dQ/dx} and \ref{lem: bound on Q_ij}. 

As in \cite{HardtLinKinderlehrer86}, we note that  for an increasing function $\eta:[0,1] \rightarrow \RR$ we have
$$\mathcal{L}^1\left( \{s: \eta'(s) \geq 8(\eta(1) - \eta(0))\} \right) \leq \frac{1}{8}.$$
Hence we can find $\sigma \in \left[\frac{\sqrt{2}}{2},1\right]$ such that $\left. u \right|_{\Omega_{\varphi}\cap \partial B^3(0,\sigma)}\in W^{1,2}(\Omega_{\varphi}\cap\partial B^3(0,\sigma),\SSS^2)$, 
\begin{equation}\label{ineq:hyb 1}
\int_{\Omega_{\varphi}\cap\partial B^3(0,\sigma)}\abs{Qu-\dbar{u}}^2 \, d \mcH^2 + \int_{\partial \left(B^3(0,\sigma) \cap G_{\varphi} \right)} \abs{Qu-\dbar{u}}^2 \, d \mcH^1 \leq 8 \left[ \int_{\Omega_\varphi\cap \BB}\abs{Qu-\dbar{u}}^2 \, dx + \int_{G_\varphi} \abs{Qu-\dbar{u}}^2 \, d\mcH^2 \right]
\end{equation}
and
\begin{equation} \label{ineq:hyb 2}
\int_{\Omega_\varphi \cap \partial B^3(0,\sigma)}\abs{\nabla_{tan}(Qu)}^2 \, d \mcH^2 \leq 8 \int_{\Omega_{\varphi}\cap \BB}\abs{\nabla(Qu)}^2 \, dx.
\end{equation}
By Lemma \ref{lemma: Poincare constant} in the Appendix, there exists a constant $c$ independent of $\varphi$ such that
\begin{equation*}
\int_{\Omega_{\varphi}\cap \BB}\abs{u-\dbar{u}}^2 \, dx \leq c \int_{\Omega_\varphi\cap \BB}\abs{\nabla u }^2 \, dx
\end{equation*}
\begin{equation}\label{ineq:hyb 3}
\int_{G_{\varphi}}\abs{u-\dbar{u}}^2 \, d\mcH^2 \leq c \int_{\Omega_\varphi\cap \BB}\abs{\nabla u }^2 \, dx.
\end{equation}

Combining inequalities \eqref{ineq:bound on nabla Qu}, \eqref{ineq:hyb 1},  \eqref{ineq:hyb 2}, \eqref{ineq:hyb 3} and Lemma \ref{lem: Qn-n bound} we have
\begin{align*}
&\left( \int_{\Omega_\varphi \cap \partial B^3(0,\sigma)} \abs{\nabla_{tan}(Qu)}^2 \, d\mcH^2 \right)\left(\int_{\partial\left( B^3(0,\sigma) \cap G_\varphi \right)}\abs{Qu - \dbar{u}}^2 \, d\mcH^1 + \int_{\Omega_\varphi \cap \partial B^3(0,\sigma)} \abs{Qu -\dbar{u}}^2 \, d \mcH^2 \right) \\
&\qquad \leq 64 \left( \int_{\Omega_\varphi\cap \BB}\abs{\nabla (Qu)}^2 \, dx \right)\left(\int_{ G_\varphi}\abs{Qu - \dbar{u}}^2 \, d\mcH^2 + \int_{\Omega_\varphi\cap \BB} \abs{Qu -\dbar{u}}^2 \, dx \right) \\
&\qquad\leq 2304 \left( \liptwo{\varphi}^2\mathcal{L}^3(\Omega_\varphi\cap \BB)+ \int_{\Omega_\varphi\cap \BB}\abs{\nabla u }^2 \, dx \right)\times \\ &\qquad \qquad \qquad \left( \int_{\Omega_\varphi\cap \BB} \abs{Qu -u}^2 +\abs{u - \dbar{u}}^2 \, dx + \int_{G_\varphi}\abs{Qu - u}^2 + \abs{u - \dbar{u}}^2 \, d \mcH^2 \right) \\
&\qquad\leq 2304 c \left( \liptwo{\varphi}^2+ \int_{\Omega_\varphi\cap \BB}\abs{\nabla u }^2 \, dx \right)\times \\ &\qquad \qquad \qquad \left( \lip{\varphi}\mcH^2(G_{\varphi}) + \lip{\varphi}\mathcal{L}^3(\Omega_\varphi) + 2c \int_{\Omega_{\varphi}\cap \BB}\abs{\nabla u}^2 \, dx \right) \\
&\qquad\leq  c \left( \liptwo{\varphi}^2+ \int_{\Omega_\varphi\cap \BB}\abs{\nabla u }^2 \, dx \right) \left( \lip{\varphi} +  \int_{\Omega_{\varphi}\cap \BB}\abs{\nabla u}^2 \, dx \right)
\end{align*}
where $c$ has absorbed all constants. Note that as
\begin{equation}\label{ineq: bounds on measure of Omega} 
 \mathcal{L}^3(\Omega_\varphi\cap \BB) \leq  \mathcal{L}^3(\BB) \text{ and } \mcH^2(G_{\varphi}) \leq \int_{\BB^2} 1 + \abs{\nabla \varphi }^2 dx \leq 2 \mathcal{L}^2(\BB^2)
 \end{equation}
the constant $c$ can be chosen to not depend on the domain. Defining $\varepsilon := a\lambda$, for some $0<a<1$ to be chosen later, and  choosing $c_1$ such that $c_1^2< a^{q}\frac{ \delta^2}{c}$, where $\delta>0$ is the constant from Lemma \ref{lem: exten lem}, we have
\begin{align*}
\left( \int_{\Omega_\varphi \cap \partial B^3(0,\sigma)} \abs{\nabla_{tan}(Qu)}^2 \, d\mcH^2 \right)\left(\int_{\partial\left( B^3(0,\sigma) \cap G_\varphi \right)}\abs{Qu - \dbar{u}}^2 \, d\mcH^1 + \int_{\Omega_\varphi \cap \partial B^3(0,\sigma)} \abs{Qu -\dbar{u}}^2 \, d \mcH^2 \right) \\ 
\leq  c_1^2 \lambda^q c \\
\leq  \varepsilon^q \delta^2.
\end{align*}
It now follows that $Qu$ satisfies the small oscillation condition of Lemma \ref{lem: exten lem}. Therefore, there exists $\omega \in W^{1,2}(\Omega_\varphi\cap B^3(0,\sigma),\SSS^2)$ such that $\left. \omega\right|_{\Omega_\varphi \cap \partial B^3(0,\sigma)} = \left. Qu\right|_{\Omega_\varphi \cap \partial B^3(0,\sigma)}$ , $\left. \omega\right|_{G_\varphi \cap  B^3(0,\sigma)} \in \tilde{\SSS}^1$ and
\begin{align*}
\int_{\Omega_\varphi \cap B^3(0,\sigma)} \abs{\nabla \omega}^2 \, dx &\leq c\varepsilon \int_{\Omega_\varphi \cap \partial B^3(0,\sigma)} \abs{\nabla_{tan}Qu}^2 \, d \mcH^2 \\ &\qquad + c\varepsilon^{-q} \left[ \int_{\Omega_\varphi \cap \partial B^3(0,\sigma)} \abs{Qu - \dbar{u}}^2 \, d \mcH^2 + \int_{\partial (\Omega_\varphi \cap \partial B^3(0,\sigma))} \abs{Qu- \dbar{u}}^2 \, d \mcH^1 \right]. 
\end{align*}
Using \eqref{ineq:bound on nabla Qu}, \eqref{ineq:hyb 1},  \eqref{ineq:hyb 2}, \eqref{ineq:hyb 3}, \eqref{ineq: bounds on measure of Omega} and Lemma \ref{lem: Qn-n bound} we bound
\begin{align}
\int_{\Omega_\varphi \cap B^3(0,\sigma)} \abs{\nabla \omega}^2 \, dx &\leq 8\varepsilon \int_{\Omega_\varphi\cap \BB}\abs{\nabla Qu}^2 \, dx + 8 c \varepsilon^{-q} \left[ \int_{\Omega_\varphi\cap \BB} \abs{Qu - \dbar{u}}^2 \, dx + \int_{G_\varphi}\abs{Qu- \dbar{u}}^2 \, d \mcH^2 \right] \notag \\
&\leq c \varepsilon \left( \liptwo{\varphi}^2 +  \int_{\Omega_\varphi\cap \BB}\abs{\nabla u }^2 \, dx\right) \notag  \\ &\qquad + c \varepsilon^{-q}\left[\lip{\varphi}^2 + \int_{\Omega_\varphi\cap \BB} \abs{u - \dbar{u}}^2  \, dx + \int_{G_\varphi} \abs{u-\dbar{u}}^2 \, d \mcH^2 \right]\label{ineq:hyb 4}
\end{align}
where $c$ has absorbed all constants.

We now use the inverse of the matrix $Q(x)$ in order to get a map that belongs to $\mathcal{U}_\varphi$. For a given point $x \in \partial \Omega_\varphi$ the matrix $[Q(x)]^{-1}$ is given by $Q(x)^T$, thus using the symmetry of $Q$ we can write $[Q(x)]^{-1}$ out explicitly as
\begin{equation*}
[Q(x)]^{-1}= \left[ 
\begin{matrix}
Q_{11}(x) & Q_{12}(x) & -Q_{13}(x) \\ Q_{21}(x) & Q_{22}(x) & -Q_{23}(x) \\ -Q_{31}(x) & -Q_{32}(x) & Q_{33}(x)
\end{matrix} \right],
\end{equation*}
where $Q_{ij}(x)$ are the entries of the matrix $Q(x)$. By the same calculations as before the map $x \mapsto [Q(x)]^{-1}$ is $C^1$ and satisfies the bound
\begin{equation}\label{ineq:hyb 5}
\abs{\nabla([Q(x)]^{-1}\omega(s))}^2 \leq 18\left( \liptwo{\varphi}^2 + c \abs{\nabla \omega}^2\right).
\end{equation}
Moreover, as $\left. \omega\right|_{G_\varphi \cap  B^3(0,\sigma)} \in \tilde{\SSS}^1$ we have $\tilde{w}(x) : = [Q(x)]^{-1}\omega(x) \in \mcU_{\Omega_\varphi \cap B^3(0,\sigma)}$ and $\left. \tilde{w}(x) \right|_{\Omega_\varphi \cap \partial B^3(0,\sigma)} = \left. u\right|_{\Omega_\varphi \cap \partial B^3(0,\sigma)}$.

Using that $\sigma>\frac{1}{2}$ and the minimality of $u$ we have
\begin{align*}
&\int_{\Omega_{\varphi}\cap B^3(0,\frac{1}{2})} |\nabla u|^2 \,dx - K_{13}\int_{G_\varphi \cap B^3(0,\frac{1}{2})} u^\alpha u^\beta  \frac{\partial \nu^\alpha }{\partial x^\beta}\,d\mcH^2 \\
&\quad \leq \int_{\Omega_{\varphi}\cap B^3(0,\sigma)} |\nabla u|^2 \,dx - K_{13}\int_{G_\varphi \cap B^3(0,\sigma)} u^\alpha u^\beta  \frac{\partial \nu^\alpha }{\partial x^\beta}\,d\mcH^2 \\ &\qquad + K_{13} \left[\int_{G_\varphi \cap B^3(0,\sigma)} u^\alpha u^\beta  \frac{\partial \nu^\alpha }{\partial x^\beta}\,d\mcH^2  - \int_{G_\varphi \cap B^3(0,\frac{1}{2})} u^\alpha u^\beta  \frac{\partial \nu^\alpha }{\partial x^\beta}\,d\mcH^2 \right] \\
%%%%%%%%%%%%%%%%%%%%%%%%%%%%%%%%%%%%%%%%%%%
&\quad \leq \int_{\Omega_{\varphi}\cap B^3(0,\sigma)} |\nabla \tilde{w}|^2 \,dx \\ &\qquad + K_{13} \left[\int_{G_\varphi \cap B^3(0,\sigma)} \left(u^\alpha u^\beta - \tilde{w}^\alpha \tilde{w}^\beta\right)  \frac{\partial \nu^\alpha }{\partial x^\beta}\,d\mcH^2  - \int_{G_\varphi \cap B^3(0,\frac{1}{2})} u^\alpha u^\beta  \frac{\partial \nu^\alpha }{\partial x^\beta}\,d\mcH^2 \right] 
\end{align*}
which implies the inequality
\begin{align*}
&\int_{\Omega_{\varphi}\cap B^3(0,\frac{1}{2})} |\nabla u|^2 \,dx \leq \int_{\Omega_{\varphi}\cap B^3(0,\sigma)} |\nabla \tilde{w}|^2 \,dx + K_{13} \left[\int_{G_\varphi \cap B^3(0,\sigma)} \left(u^\alpha u^\beta - \tilde{w}^\alpha \tilde{w}^\beta\right)  \frac{\partial \nu^\alpha }{\partial x^\beta}\,d\mcH^2   \right] .
\end{align*}
Next we apply \eqref{ineq:hyb 5} and \eqref{ineq:hyb 4}, choose $a < \frac{1}{18 c}$ and substitute $\varepsilon= a\lambda $ to get the bound
\begin{align*}
\int_{\Omega_{\varphi}\cap B^3(0,\sigma)} |\nabla \tilde{w}|^2 \,dx &\leq \int_{\Omega_{\varphi}\cap B^3(0,\sigma)} 18\left( \liptwo{\varphi}^2 + c \abs{\nabla \omega}^2\right)\,dx   \\
&\leq 18 \Bigg( c  \varepsilon \left( \liptwo{\varphi}^2 +  \int_{\Omega_\varphi\cap \BB}\abs{\nabla u }^2 \, dx\right) \\ &\quad + c \varepsilon^{-q}\left[\lip{\varphi}^2 + \int_{\Omega_\varphi} \abs{u - \dbar{u}}^2 \, dx + \int_{G_\varphi} \abs{u-\dbar{u}}^2 \, d \mcH^2 \right]   \Bigg) \notag \\ &\quad + 18\liptwo{\varphi}^2\mathcal{L}^3(\Omega_\varphi)  \\
&\leq  \lambda \left( \liptwo{\varphi}^2 + \int_{\Omega_\varphi\cap \BB} \abs{\nabla u}^2 \, dx \right) \\ &\quad + \lambda^{-q}c_2 \left( \lip{\varphi}^2 + \int_{\Omega_\varphi\cap \BB} \abs{u - \dbar{u}}^2 \, dx + \int_{G_\varphi} \abs{u - \dbar{u}}^2 d\mcH^2 \right) \notag \\ &\quad+  c_2 \liptwo{\varphi}^2.
\end{align*}
On the other hand, as $u$ and $\tilde{w}$ are both $\SSS^2$ valued, we have 
\begin{align*}
K_{13} \left[\int_{G_\varphi \cap B^3(0,\sigma)} \left(u^\alpha u^\beta - \tilde{w}^\alpha \tilde{w}^\beta\right)  \frac{\partial \nu^\alpha }{\partial x^\beta}\,d\mcH^2   \right] &\leq |K_{13}| \int_{G_\varphi \cap B^3(0,\sigma)} \left|u^\alpha u^\beta - \tilde{w}^\alpha \tilde{w}^\beta\right|  \left|\frac{\partial \nu^\alpha }{\partial x^\beta} \right| \,d\mcH^2   \\
&\leq c \int_{G_\varphi \cap B^3(0,\sigma)} \left|\frac{\partial \nu^\alpha }{\partial x^\beta} \right| \,d\mcH^2.
\end{align*}

A straight forward calculation gives
\begin{equation*}
\frac{\partial \nu}{\partial x^\beta} = \left[ \frac{\left(-\varphi_{x^1,x^\beta},-\varphi_{x^2,x^\beta},0 \right)}{\left(\varphi_{x^1}^2 +\varphi_{x^2}^2 + 1 \right)^{\frac{1}{2}}} - \frac{\left(\varphi_{x^1,x^\beta}\varphi_{x^1} + \varphi_{x^2,x^\beta}\varphi_{x^2}\right) \left( -\varphi_{x^1} , -\varphi_{x^2}, 1  \right) }{\left(\varphi_{x^1}^2 +\varphi_{x^2}^2 + 1 \right)^{\frac{3}{2}}} \right]
\end{equation*}
hence 
\begin{equation*}
\abs{\frac{\partial \nu^\alpha}{\partial x^\beta} }\leq 3 \liptwo{\varphi}.
\end{equation*}
Recalling that $\mcH^2(G_\varphi)$ is bounded for all $\varphi$ we complete the proof.

\end{proof}

\begin{boxlemma}{\bf [Energy Improvement]}\label{lem:Energy Improvement}
There are positive constants $\varepsilon,c$ and $\theta <1$ such that if $\varphi$ is as in \eqref{def:varphi}, $u$ is a minimizer of $\mathbb{G}_\varphi$ amongst maps in $\mcU_{\varphi}$ with fixed trace on $H_{\varphi}$ and $\int_{\Omega_\varphi}\abs{\nabla u}^2 dx \leq \varepsilon^2$, then
\begin{equation}\label{inequ: Energy Improvement}
\frac{1}{\theta} \int_{C(0,\theta) \cap \Omega_\varphi} \abs{ \nabla u}^2 dx \leq \theta \max \{ \int_{\Omega_\varphi} \abs{ \nabla u}^2 dx , c \left( \lip{\varphi} + \liptwo{\varphi}\right)\}
\end{equation}
\end{boxlemma}
\begin{proof}
Suppose, for a contradiction, that for fixed $0<\theta<1$ there are sequences $(u_i),(\varphi_i),(\varepsilon_i)$ such that $u_i$ is minimizing in $\Omega_{\varphi_i}$ and $\varepsilon_i^2 = \int_{\Omega_{\varphi_i}}\abs{\nabla u_i}^2 dx \rightarrow 0$ as $i \rightarrow \infty$, but
$$\frac{1}{\theta}\int_{C(0,\theta) \cap \Omega_{\varphi_i}}\abs{\nabla u_i}^2dx > \theta \max \left\{ \int_{\Omega_{\varphi_i}} \abs{\nabla u_i}^2 dx , i \left(\lip{\varphi_i} + \liptwo{\varphi_i}\right)\right\}.$$
This implies that 
\begin{align}
\frac{1}{\theta} \int_{C(0,\theta) \cap \Omega_{\varphi_i}} \abs{\nabla u_i}^2 dx &> \theta\varepsilon_i^2, \\
\frac{\lip{\varphi_i}}{\varepsilon_i^2}&\rightarrow 0,\label{ineq: lip phi_i bound} \\ 
\frac{\liptwo{\varphi_i}}{\varepsilon_i^2} &\rightarrow 0. \label{ineq: lip2 phi_i bound} 
\end{align}
We now consider the normalised functions
\begin{equation*}
v_i = \varepsilon_i^{-1}({u}_i - \dbar{u}_i).
\end{equation*}
Then, by Lemma \ref{lemma: Poincare constant}, the sequence $\left\{ \norm{v_i}_{W^{1,2}(\Omega_{\varphi_i})}\right\}_{i=1}^\infty$ is a bounded sequence in $\RR$. As each $v_i$ is defined on a different domain we extend them all to a common domain using the extension Lemma \ref{lemma:extension lemma} that can be found in the appendix. Let $\mathcal{C}=\{x\in \RR^3 : \abs{(x^1,x^2)}<1 \text{ and } -1 < x^3 < \frac{1}{2}\}$. Then there exists, by Lemma \ref{lemma:extension lemma}, a constant $c$, that is independent of $i$, and functions $\widehat{v}_i \in W^{1,2}(\mathcal{C})$ such that $v_i = \left. \widehat{v}_i\right|_{\Omega_{\varphi_i}}$ and 
\begin{equation*}
\norm{\widehat{v}_i}_{W^{1,2}(\mathcal{C})} \leq c \norm{v_i}_{W^{1,2}(\Omega_{\varphi_i})}
\end{equation*}
(note that $\Omega_{\varphi_i} \subset \mathcal{C}$ for $i$ sufficiently large and so we assume $\Omega_{\varphi_i} \subset \mathcal{C}$ for all $i$). As $\widehat{v_i}$ is bounded in $W^{1,2}(\mathcal{C})$ there exists $\widehat{v} \in W^{1,2}(\mathcal{C})$ such that $\widehat{v_i}$ converges weakly (on a subsequence) to $\widehat{v}$ in $W^{1,2}(\mathcal{C})$. Define $v \in W^{1,2}(\Omega_0)$ as $v:= \left.\widehat{v}\right|_{\Omega_0}$.

We claim that the function $v$ is harmonic. In order to see this we first note that as $u_i$ are minimizers and $u_i(x)\cdot \nu(x) = 0$ for $x \in \partial G_{\varphi_i}$ we have that $u_i$ satisfies
\begin{equation}\label{eq:weak harmonic maps equation}
\int_{\Omega_{\varphi_i}}\langle \nabla u_i , \nabla \zeta\rangle - \abs{ \nabla u_i}^2 u_i \cdot \zeta dx= 0,
\end{equation}
for all $\zeta \in C^\infty_0(\Omega_{\varphi_i})$. Let $\zeta \in C^\infty(\RR^3)$ be such that $spt(\zeta) \subset \Omega_0$. Then for sufficiently large $i$ we have $spt(\zeta) \subset \Omega_{\varphi_i}$. Observing that $\nabla u_i = \varepsilon_i \nabla v_i$, substituting this into \eqref{eq:weak harmonic maps equation} and dividing by $\varepsilon_i$ yields
\begin{align*}
0 = \int_{\Omega_{\varphi_i}} \langle \nabla v_i , \nabla \zeta \rangle - \varepsilon_i \abs{\nabla v_i}^2 u_i \cdot \zeta dx.
\end{align*}
Using the uniform bounds on $\abs{\nabla v_i}^2$,$u_i$ and $\zeta$, we see that the second term tends to $0$ as $i \rightarrow \infty$. As $spt(\zeta) \subset \Omega_0 \subset \mathcal{C}$ and we have that $spt(\zeta) \subset \Omega_{\varphi_i} \subset \mathcal{C}$ for $i$ large enough, we can use the weak convergence of $\widehat{v}_i$ to $\widehat{v}$ to get

\begin{align*}
0 &=\lim_{i\rightarrow \infty}  \int_{\Omega_{\varphi_i}} \langle \nabla v_i , \nabla \zeta \rangle - \varepsilon_i \abs{\nabla v_i}^2 u_i \cdot \zeta dx \\
&= \lim_{i \rightarrow \infty}\left[\int_{\mathcal{C}} \langle \nabla v_i ,\nabla \zeta \rangle dx\right] + 0 \\
&= \int_{\mathcal{C}}\langle \nabla v ,\nabla \zeta \rangle dx \\
&= \int_{\Omega_0} \langle \nabla v ,\nabla \zeta \rangle dx.
\end{align*}
Therefore $v$ is harmonic in $\Omega_0$.

 We now examine the behaviour of $v$ on $G_0$ and we will show that $v$ is regular up to the boundary. In order to do so we introduce the following subspaces of $\SSS^2$:
 \begin{align*}
\Sigma_x^i :&= \{y \in \SSS^2: y \cdot \nu(x^1,x^2,\varphi_i(x^1,x^2)) = 0\} \\
\Sigma_0 :&= \{y \in \SSS^2:y \cdot (0,0,1) =0 \}.
\end{align*}

\noindent \emph{Claim: $v(x) \in {\rm Tan}(\Sigma_0,a)$ for $\mcH^2$ almost everywhere $x \in G_0$.}
\bigskip

\noindent \emph{Proof of Claim:}
For each $i$ let $Q_i$ be the projection defined by \eqref{def: Q def } (that now depends on $i$). Let $\tilde{u}_i:\Omega_0 \rightarrow \SSS^2$ be the functions defined by 
\begin{equation*}
\tilde{u}_i(x):= u_i(\tilde{x}_i), \quad \text{ where } \tilde{x}_i:= (x^1,x^2,x^3 + \varphi_i(x^1,x^2)). 
\end{equation*}
and $\tilde{v}_i$ to be the functions 
\begin{equation*}
\tilde{v}_i = \varepsilon_i^{-1}(\tilde{u}_i - \dbar{u}_i).
\end{equation*}
Note that for almost every $x \in \Omega_0$ we have $\tilde{v}_i(x) \rightarrow v(x)$.
As $u_i(x)\in \Sigma_x^i$ for almost every $x\in G_{\varphi_i}$, we have $Q_i(\tilde{x}_i)\cdot \tilde{u}_i(x)\in \Sigma_0$ for almost every $x \in G_0$. Then for almost every $x \in G_0$ we have
\begin{equation*}
{\rm dist}(\dbar{u}_i,\Sigma_0)^2 \leq \abs{Q_i(\tilde{x}_i)\tilde{u}_i(x) - \dbar{u}_i}^2 .
\end{equation*}
Averaging this over $G_0$ and using Lemma \ref{lem: Qn-n bound}, relation \eqref{ineq: lip phi_i bound} and the Poincar\'{e} inequalities gives
\begin{align}
{\rm dist}(\dbar{u}_i,\Sigma_0)^2 &\leq \left[\mcH^2(G_0)\right]^{-1} \int_{G_0}\abs{Q_i(x)\tilde{u_i} - \dbar{u}_i}^2d\mcH^2 \notag \\
&\leq 2\left[\mcH^2(G_0)\right]^{-1} \left( \int_{G_0}\abs{Q_i(\tilde{x}_i)\tilde{u_i}(x) - \tilde{u_i}(x)}^2 + \abs{\tilde{u_i} - \dbar{u}_i}^2 \mcH^2 \right) \notag \\
&\leq c \varepsilon_i^2. \label{ineq:bound on dist}
\end{align}  
Hence, for $i$ sufficiently large, there is a unique nearest point $a_i$ of $\dbar{u}_i$ on $\Sigma_0$. As $(\dbar{u}_i)_{i \in \NN}$ is a bounded sequence in $\RR^3$ it has a subsequence converging to some $a \in \RR^3$. Also $\varepsilon_i^{-1}\abs{\dbar{u}_i-a_i} = \frac{{\rm dist}(\dbar{u}_i,\Sigma_0)}{\varepsilon_i}\leq \frac{\sqrt{c} \varepsilon_i}{\varepsilon_i} = \sqrt{c}$, is bounded. Thus on subsequence we have
\begin{equation*}
\lim_{i \rightarrow \infty} \dbar{u}_i = a \in \Sigma_0 \text{ and } \lim_{i \rightarrow} \varepsilon_i^{-1}(\dbar{u}_i - a_i) = w \in \RR^3.
\end{equation*}
As $(\dbar{u}_i-a_i) \in {\rm Nor}(\Sigma_0,a_i)$ and $a_i \rightarrow a$ we have $w \in {\rm Nor}(\Sigma_0,a)$.
For almost every $x \in G_0$, $\tilde{v_i}(x) \rightarrow v(x)$ as $i\rightarrow \infty$. For such an $x$ and $i$ sufficiently large we have that
 $$(x^1,x^2,x^3+ \varphi_i(x^1,x^2)) \in G_{\varphi_i},$$
and hence $\tilde{u}_i(x) \in \Sigma^i_{\tilde{x}_i}$. We compute
\begin{align*}
\lim_{i \rightarrow \infty} \varepsilon_i^{-1}(Q_i(x)\tilde{u}_i(x) - \dbar{u}_i) &= \lim_{i \rightarrow \infty}\left(\varepsilon_i^{-1}(Q_i(x)\tilde{u}_i(x)- \tilde{u}_i(x)) + \varepsilon_i^{-1}(\tilde{u}_i(x) - \dbar{u}_i)\right) \\
&= \lim_{i \rightarrow \infty} \varepsilon_i^{-1}\left(Q_i(x)\tilde{u}_i(x) - \tilde{u_i}(x) \right) + v(x),
\end{align*}
using Lemma \ref{lem: Qn-n bound} 
\begin{align*}
\lim_{i \rightarrow \infty} \varepsilon_i^{-1}\abs{Q_i(x)\tilde{u_i}(x) - \tilde{u_i}(x)} &\leq \lim_{i\rightarrow\infty} c\frac{\lip{\varphi_i}}{\varepsilon_i} \\
&=0.
\end{align*}
Thus
\begin{equation*}
\lim_{i \rightarrow \infty} \varepsilon_i^{-1}\left( Q_i(x)\tilde{u_i}(x) - \dbar{u}_i\right) = v(x) \text{ for almost every }x \in G_0.
\end{equation*}
As $\varepsilon_i^{-1}(Q_i(x)\tilde{u}_i(x) - a_i)$ approaches a vector in ${\rm Tan}(\Sigma_0,a)$ and $w \in {\rm Nor}(\Sigma_0,a)$, we have
$$\varepsilon_i^{-1}(\tilde{u_i}(x) - a_i) \cdot \varepsilon_i^{-1}(\dbar{u}_i - a_i) \rightarrow 0 \text{ as } i \rightarrow \infty.$$
Thus
\begin{align*}
v(x)\cdot(-w) &= \lim_{i \rightarrow}\varepsilon_i^{-1}(\tilde{u_i}(x) - \dbar{u}_i) \cdot \varepsilon_i^{-1}(a_i - \dbar{u}_i) \\
&= \lim_{i \rightarrow} \left( \varepsilon_i^{-1}(\tilde{u_i}(x) - a_i) \varepsilon_i^{-1}(a_i - \dbar{u}_i)  \right)\cdot \varepsilon_i^{-1}(a_i - \dbar{u}_i) \\
&= \abs{w}^2.
\end{align*}
By averaging over $G_0$ we deduce $\abs{w}^2 = \dbar{v}(-w) = 0$, hence
\begin{align*}
v(x) &= v(x) + w \\
&= \lim_{i \rightarrow}\varepsilon_i^{-1}(Q_i(x)\tilde{u_i}(x) - \dbar{u}_i) + \lim_{i \rightarrow}\varepsilon_i^{-1}(\dbar{u}_i - a_i) \\
&= \lim_{i \rightarrow}\varepsilon_i^{-1}(Q_i(x)\tilde{u_i}(x) - a_i) \in{\rm Tan}(\Sigma_0 ,a).
\end{align*}   
This proves the claim.

Next we decompose $v = v^{\top} + v^{\bot}$, where $v^{\top} \in {\rm Tan}(\Sigma_0 ,a )$ and $v^{\bot} \in{\rm Nor}(\Sigma_0,a)$. We deduce that both $v^{\top}$ and $v^{\bot}$ are Harmonic inside $\Omega_0$ and that $v^{\bot}$ is regular up to $G_0$ because it satisfies the boundary condition 
$$v^{\bot} = 0 \text{ on } G_0.$$
To verify the regularity of $v^{\top}$ up to $G_0$ we show $v^{\top}$ satisfies the Neumann boundary condition 
$$\frac{\partial}{\partial x_m}v^{\top} = 0 \text{ on }G_0$$
in a weak sense, i.e
$$ \int_{\Omega_0} \nabla v \cdot \nabla \xi \, dx = 0$$ 
for any $\xi \in C^{\infty}(\overline{\Omega_0} , {\rm Tan}(\Sigma_0 , a))$ with $(\partial \Omega_0 \setminus G_0 )\cap {\rm Spt}(\xi) = \emptyset$. For this purpose choose an open neighbourhood $U$ of $\Sigma_0$ in $\RR^3$ such that every point $y \in U$ has a unique nearest point on  $\Sigma_0$. For $x \in G_{\varphi_i}$ and  $y \in U$, define the 1-dimensional subspaces of $\RR^3$ as
$$T_i(x,y) =  \{t(y \times \nu_i(x)) + y : t \in \RR\}.$$
Then for $x=(x^1,x^2,x^3) \in \Omega_{\varphi_i}$ and  $y \in U$, define 
\begin{equation*}
T_i(x,y) := T_i\big((x^1,x^2,\varphi_i(x^1,x^2)),y\big) .
\end{equation*}
We then have that $\{T_i(x,y):x \in \Omega_{\varphi_i},y \in U\}$ is a smooth field of 1 dimensional subspaces such that 
\begin{align*}
T_i(x,y) &\subseteq {\rm Tan}(\SSS^2 , y) \text{ for } x \in G_{\varphi_i}, y \in \SSS^2 \cap U 
\end{align*}	
and
\begin{align*}
T_i(x,y) &= {\rm Tan}(\Sigma^i_x,y) \text{ for } x \in G_{\varphi_i} \text{ and } y \in \Sigma_x^i.
\end{align*}
Next, define $\Pi_i : \Omega_{\varphi_i} \times U \times \RR^3 \rightarrow \RR^3$ such that for $(x,y,z) \in \Omega_{\varphi_i} \times U \times \RR^3$, $\Pi_i(x,y,z)$ is the orthogonal projection of $z$ onto $T_i(x,y)$. Explicitly $\Pi_i$ is given by
\begin{equation} \label{def:proj onto tangent space}
\Pi_i(x,y,z) = \frac{ \left[ (y \times \nu_i(x)) \otimes (y \times \nu_i(x))\right]z}{\abs{y \times \nu_i(x)}^2} + y.
\end{equation}
We have the following bounds on the derivatives of $\Pi_i$:
\begin{align}\label{inequal: bounds on derivatives of Pi}
\abs{\frac{\partial \Pi_i(x,y,z)}{\partial x^j}} < C \varepsilon_i^2\abs{z}, \qquad
\abs{\frac{\partial \Pi_i(x,y,z)}{\partial z^j}} \leq 1, \qquad
\abs{\frac{\partial \Pi_i(x,y,z)}{\partial y^j}} < C\abs{z},
\end{align} 
the proof these bounds can be found in Lemma \ref{lemma:appendix bounds on derivatives of Pi} of the appendix.

We are now in a position to show that 
\begin{equation*}
\int_{\Omega_0}\nabla v \cdot \nabla \xi dx = 0.
\end{equation*}
We use the cut off function 
\begin{align*}
\lambda_\delta(t) := \begin{cases}
1 &\quad \text{ if } 0 \leq t \leq \frac{\delta}{2}, \\
2-2\delta^{-1} t &\quad \text{ if } \frac{\delta}{2} < t \leq \delta, \\
0 &\quad \text{ if } \delta < t. 
\end{cases}
\end{align*}
to define
$$\xi_i(x) = \lambda_{\delta_i}({\rm dist}(u_i(x),\Sigma_0)) \cdot \Pi_i(x,u_i(x),\xi(x)),$$
where $\delta_i$ will be determined later. We have
\begin{equation*}
\begin{cases}
	\xi_i(x) \in {\rm Tan}(\SSS^2 , u_i(x)) \text{ for } x \in \Omega_{\varphi_i}, \\
	\xi_i(x) \in {\rm Tan}(\Sigma_x^i,u_i(x)) \text{ for } x \in G_{\varphi_i}.
\end{cases}
\end{equation*}
Next, for $x \in \overline{\Omega}_{\varphi_i}$ let $u_i^t(x)$ be the solution of 
\begin{equation*}
\begin{cases}
	\left(\frac{d}{dt}\right)u_i^t(x) = \xi_i(x), \\
	u_i^0(x) = u_i(x).
\end{cases}
\end{equation*}
Then $u_i^t \in W^{1,2}(\Omega_{\varphi_i},\SSS^2)$ with $\frac{d}{dt}u_i^t \in {\rm Tan}(\Sigma_x^i,u_i(x))$ for $x \in G_{\varphi_i}$, hence $u_i^t(x) \in \Sigma_x^i$ for $x \in G_{\varphi_i}$ and $u_i^t(x) = u_i(x)$ for $x \in \partial \Omega_{\varphi_i} \setminus G_{\varphi_i}$. The minimality of $u_i$ implies that 
\begin{align*}
0 &= \left.\frac{d}{dt}\right|_{t=0} \int_{\Omega_{\varphi_i}}\abs{\nabla u_i^t(x)}^2 \, dx - K_{13} \int_{G_{\varphi_i}} u^{t,\alpha}_i u^{t,\beta}_i \frac{\partial \nu^\beta}{\partial x^\alpha} \, d\mcH^2 \\ 
&= 2 \int_{\Omega_{\varphi_i}}\nabla u_i \nabla \xi_i \, dx - K_{13}\int_{G_{\varphi_i}} \xi_i^\alpha u_i^\beta \frac{\partial \nu^\beta}{\partial x^\alpha} + \xi_i^\beta u_i^\alpha \frac{\partial \nu^\beta}{\partial x^\alpha}\, d\mcH^2 .
\end{align*}
Let $A_i = \{x \in \overline{\Omega}_{\varphi_i} : \abs{u_i(x) - \dbar{u}_i}^2 \geq \frac{\delta_i^2}{4}\}$. We have
$$\int_{A_i} \frac{\delta_i^2}{4} \, dx \leq \int_{A_i} \abs{u_i(x) - \dbar{u}_i}^2 \, dx \leq \int_{\Omega_{\psi_i}} \abs{u_i(x) - \dbar{u}_i}^2 \, dx \leq
 c \int_{\Omega_{\psi_i}} \abs{\nabla u_i}^2 \, dx \leq c \varepsilon_i^2$$
and hence
\begin{equation}\label{inequal: Measure of Ai}
\mathcal{L}^3(A_i) \leq  4c\varepsilon_i^2 \delta_i^{-2}.
\end{equation}
Setting $B_i = \Omega_{\varphi_i}\setminus A_i$ we have
\begin{align*}
\abs{\int_{\Omega_{\varphi_i}}\nabla v_i \cdot \nabla \xi \, dx } &= \varepsilon_i^{-1} \Bigg| \int_{\Omega_{\varphi_i}}\nabla u_i \cdot \nabla \xi - \nabla u_i \cdot \nabla \xi_i \, dx   \\ &\qquad \qquad +   \frac{K_{13}}{2} \int_{G_{\varphi_i}} \xi_i^\alpha u_i^\beta \frac{\partial \nu^\beta}{\partial x^\alpha} + \xi_i^\beta u_i^\alpha \frac{\partial \nu^\beta}{\partial x^\alpha}\, d\mcH^2 \Bigg| \\
&\leq   \underbrace{\varepsilon_i^{-1} \abs{ \int_{A_i}\nabla u_i \cdot\left( \nabla \xi - \nabla \xi_i\right)\, dx }}_{:=I_i} +\underbrace{\varepsilon_i^{-1} \abs{ \int_{B_i}\nabla u_i \cdot\left( \nabla \xi - \nabla \xi_i\right)\, dx }}_{:=II_i}  \\
&\qquad + \varepsilon_i^{-1} \Bigg|\frac{K_{13}}{2} \int_{G_{\varphi_i}} \xi_i^\alpha u_i^\beta \frac{\partial \nu^\beta}{\partial x^\alpha} + \xi_i^\beta u_i^\alpha \frac{\partial \nu^\beta}{\partial x^\alpha}\, d\mcH^2 \Bigg| \\
&\leq  I_i + II_i + c\varepsilon_i^{-1} \mcH^2(G_{\varphi_i})\norm{\varphi_i}_{C^2.} 
\end{align*}
We have that $ \varepsilon_i^{-1} c\mcH^2(G_{\varphi_i})\norm{\varphi_i}_{C^2}  \rightarrow 0$ as $i\rightarrow \infty$ and so we must show that $I_i,II_i \rightarrow 0 $ as $i\rightarrow 0$.\\

\bigskip
\noindent \framebox{$I_i\rightarrow 0$}\\
\bigskip

Using H\"{o}lder we have
\begin{align}
I_i &\leq \varepsilon_i^{-1} \left( \int_{A_i} \abs{\nabla u_i}^2 \, dx \right)^{\frac{1}{2}} \left( 2\left( \int_{A_i} \abs{\nabla \xi}^2 \,dx + \int_{A_i} \abs{\nabla \xi_i}^2 \,dx \right)\right)^{\frac{1}{2}} \notag \\
&\leq \varepsilon_i^{-1}(\varepsilon_i^2)^{\frac{1}{2}}\left(  2 (c \varepsilon_i^2 \delta_i^{-2}) + 2 \int_{A_i} \abs{\nabla \xi_i}^2 \, dx \right) \qquad \text{( as $ \abs{\nabla \xi(x)} < c$ for some $c$)} \label{ineq: bound on I_i}. \\
\end{align}
We now focus on $\abs{\nabla \xi_i}^2$. Writing 
\begin{equation*}
\Pi_i(x,y,z) = \left( \Pi_i^1(x,y,z) , \Pi_i^2(x,y,z) , \Pi_i^3(x,y,z)\right),\quad u_i = (u_i^1,u_i^2,u_i^3)\quad \text{ and } \xi = (\xi^1,\xi^2,\xi^3),
\end{equation*}
 we have 
\begin{align*}
\abs{\frac{\partial \xi^k_i}{\partial x^j}}^2 &= \abs{ \frac{\partial}{\partial x^j}\left( \lambda_{\delta_i}({\rm dist}(u_i(x),\Sigma_0)) \Pi_i^k(x,u_i(x),\xi(x))\right) }^2 \\
&= \left| \lambda'_{\delta_i}({\rm dist}(u_i(x),\Sigma_0)) \left( \sum_{l=1}^3 \frac{\partial u_i^l(x)}{\partial x^j} \frac{\partial({\rm dist} (u_i(x),\Sigma_0))}{\partial x^l} \right)\Pi_i^k(x,u_i(x),\xi(x)) \right. \\ &\qquad \qquad \left. + \lambda_{\delta_i}({\rm dist } (u_i(x),\Sigma_0)) \frac{\partial }{\partial x^j}\left( \Pi_i^k(x,u_i(x),\xi(x)) \right) \right|^2 \\
&\leq c \delta_i^{-1} \abs{\sum_{l=1}^3 \frac{\partial u_i^l(x)}{\partial x^j}}^2  + 2 \left| \sum_{l=1}^3 \left[ \frac{\partial \Pi_i^k(x,u_i(x),\xi(x)}{\partial x^j}\delta_{lj} \right. \right.\\ &\qquad \qquad \left. \left.  + \frac{\partial \Pi_i^k(x,u_i(x),\xi(x))}{\partial y^l}\frac{\partial u_i^l(x)}{\partial x^j} + \frac{\partial \Pi_i^k(x,u_i(x),\xi(x)}{\partial z^l}\frac{\partial \xi^l(x)}{\partial x^j} \right] \right|^2 \\
&\leq c \delta_i^{-1} \sum_{l=1}^3 \abs{\frac{\partial u_i^l(x)}{\partial x^j}}^2 + c \sum_{l=1}^3 \abs{\frac{\partial \Pi_i^k}{\partial x^l}(x,u_i(x),\xi(x))}^2 + c \sum_{1=l}^3 \abs{ \frac{\partial u_i^l}{\partial x^j}(x)}^2 + c\sum_{l=1}^3 \abs{ \frac{\partial \xi^l(x)}{\partial x^j}}^2,
\end{align*}
where in the last line we have used \eqref{inequal: bounds on derivatives of Pi}. Summing this inequality over $j,k$, using the estimates \eqref{inequal: bounds on derivatives of Pi}, \eqref{inequal: Measure of Ai} and the fact that $\int_{\Omega_{\varphi_i}}\abs{\nabla u}^2 dx = \varepsilon_i^2$ yields
\begin{align}
\int_{A_i}\abs{\nabla \xi_i}^2 dx &= \int_{A_i} \sum_{i,j=1}^3 \abs{\frac{\partial \xi_i^k(x)}{\partial x^j}}^2 dx \notag \\
&\leq \int_{A_i}\sum_{i,j=1}^3 \left[ c \delta_i^{-1} \sum_{l=1}^3 \abs{\frac{\partial u_i^l}{\partial x^j}}^2 + c \sum_{l=1}^3 \abs{\frac{\partial \Pi_i^k(x,u_i(x),\xi(x))}{\partial x_l}}^2 \right. \notag \\ &\qquad \qquad \left. +  c \sum_{l=1}^3 \abs{ \frac{\partial u_i^l(x)}{\partial x^j}}^2  + c \sum_{l=1}^3 \abs{\frac{\partial \xi^l(x)}{\partial x^j}}^2 \right] dx \notag \\
&\leq c \delta_i^{-1} \varepsilon_i^2 + c \varepsilon_i^2 + c\varepsilon_i^2 + c \varepsilon_i^2\delta_i^{-2}    \notag \\
&\leq c \delta_i^{-1}\varepsilon_i^2 + 4 c \varepsilon_i^2 \delta_i^{-2}(c + c) + c \varepsilon_i^2. \label{ineq: bound on nabla xi_i}
\end{align}
Therefore, by combining \eqref{ineq: bound on I_i} and \eqref{ineq: bound on nabla xi_i} we get
\begin{equation*}
I_i \leq  2(c\varepsilon_i^2 \delta_i^{-1}) + c \delta_i^{-1} \varepsilon^2_i + 4c \varepsilon_i^2\delta_i^{-2}(c\varepsilon_i^2 + c) + c \varepsilon_i^2  \rightarrow 0 \text{ as } i\rightarrow \infty,
\end{equation*}
providing $\delta_i=\varepsilon_i^{\frac{1}{3}}b$, where $b$ is a constant to be chosen later.

We now show $II_i \rightarrow 0$.
Recall that for $x \in B_i$ we have $\abs{u_i(x) - \dbar{u}_i}\leq \frac{\delta_i}{4}$ and so for $x \in B_i$
\begin{align*}
{\rm dist}(u_i(x) ,\Sigma_0) &\leq {\rm dist}(u_i(x),\dbar{u}_i) + {\rm dist}(\dbar{u_i},\Sigma_0) \\
&\leq \frac{\delta_i}{4} + c \varepsilon_i\qquad ( \text{by inequality \ref{ineq:bound on dist}}) \\
&\leq \frac{\delta_i}{2} \qquad (\text{ providing } \delta_i = 4c\varepsilon_i^{\frac{1}{3}}).
\end{align*}
For $x \in B_i$ we have 
\begin{align*}
\xi_i(x) &= \lambda_{\delta_i}({\rm dist}(u_i(x),\Sigma_0))\cdot \Pi_i(x,u_i(x),\xi(x)) \\
&= \Pi_i(x,u_i(x),\xi(x)).
\end{align*}
As $\xi \in {\rm Tan}(\Sigma_0,a)$ we have $\xi(x) = \Pi_i(0,a,\xi(x))$, hence
\begin{align*}
\varepsilon_i^{-1}\abs{\int_{B_i} \nabla u_i \left( \nabla \xi -\nabla \xi_i\right) \, dx } &= \varepsilon_i^{-1}\abs{\int_{B_i} \nabla u_i \left( \nabla \Pi_i(0,a,\xi)  -\nabla \Pi_i(x,u_i(x),\xi)\right)\, dx } \\
&\leq \underbrace{\varepsilon_i^{-1}\abs{\int_{B_i} \nabla u_i \left( \nabla \Pi_i(0,a,\xi)  -\nabla \Pi_i(0,\dbar{u}_i,\xi)\right) dx }}_{:=III_i} \\ &\qquad + \underbrace{\varepsilon_i^{-1}\abs{\int_{B_i} \nabla u_i \left( \nabla \Pi_i(x,u_i(x),\xi)  -\nabla \Pi_i(0,\dbar{u}_i,\xi)\right)\, dx }}_{:=IV_i}. \\
\end{align*}
We now show that $III_i$ and $IV_i$ both go to zero separately. 
Using H\"{o}lder's inequality and Lemma \ref{lem:diff between derivatives of Pi} from the appendix we estimate 
\begin{align*}
III_i &\leq \varepsilon_i^{-1} \left( \int_{B_i} \abs{ \nabla u_i}^2 \, dx \right)^{\frac{1}{2}}\left( \int_{B_i}  \left( \nabla \Pi_i(0,a,\xi)  -\nabla \Pi_i(0,\dbar{u}_i,\xi)\right)^2 dx \right)  ^{\frac{1}{2}} \\
&= \left[ \int_{B_i} \abs{ \sum_{j,k=1}^3 \left( \frac{\partial \Pi_i^k(0,a,\xi)}{\partial x^j}  - \frac{\partial \Pi_i^k(0,\dbar{u}_i, \xi)}{\partial x^j}\right)}^2 \,dx \right]^{\frac{1}{2}} \\
&= \left[ \int_{B_i} \abs{ \sum_{j,k=1}^3 \left( \sum_{l=1}^3\frac{\partial \Pi_i^k}{\partial z_l}(0,a,\xi)\frac{\partial \xi^l}{\partial x^j}  - \frac{\partial \Pi_i^k}{\partial z_l}(0,\dbar{u}_i, \xi)\frac{\partial \xi^l}{\partial x^j}\right)}^2 \,dx \right]^{\frac{1}{2}} \\
&\leq \left[ \int_{B_i} c\abs{a - \dbar{u}_i}dx \right]^{\frac{1}{2}} \rightarrow 0 \text{ as } i \rightarrow \infty.
\end{align*}
\framebox{$IV_i \rightarrow 0$}
\begin{align*}
IV_i &= \varepsilon_i^{-1}\abs{ \int_{B_i} \nabla u_i \cdot \left[ \nabla \Pi_i(x,u_i,\xi) - \nabla \Pi_i(0,\dbar{u}_i,\xi)\right] \, dx} \\
&\leq \left[ 2 \underbrace{\int_{B_i} \abs{ \nabla \Pi_i (x,u_i , \xi) - \nabla \Pi_i(x,\dbar{u}_i,\xi)}^2\,dx}_{:=V_i} + \underbrace{ \int_{B_i} \abs{ \nabla \Pi_i(x,\dbar{u}_i,\xi) - \nabla \Pi_i(0,\dbar{u}_i,\xi)}^2 \, dx}_{:=VI_i} \right]^{\frac{1}{2}} \\
&= \left( 2(V_i + VI_i)\right)^{\frac{1}{2}}
\end{align*}
If we can show $V_i, VI_i \rightarrow 0$ then $IV_i \rightarrow 0$ and hence $II_i \rightarrow 0$. Using Lemma \ref{lem:diff between derivatives of Pi} and \eqref{inequal: bounds on derivatives of Pi} we have
\begin{align*}
V_i &= \int_{B_i} \abs{ \nabla \Pi_i (x,u_i , \xi) - \nabla \Pi_i(x,\dbar{u}_i,\xi)}^2 \, dx \\
&= \int_{B_i} \sum_{j,k=1}^3 \left| \sum_{l=1}^3 \delta_{lj}\left( \frac{\partial \Pi_i^k}{\partial x_l}(x,u_i,\xi) - \frac{\partial \Pi_i^k}{\partial x_l}(x,\dbar{u}_i,\xi) \right)  + \sum_{l=1}^3 \frac{\partial u_i^l}{\partial x^j} \frac{\partial \Pi_i^k}{\partial y_l}(x,u_i,\xi) \right. \\ &\qquad \left. + \sum_{l=1}^3 \frac{\partial \xi_l}{\partial x^j}\left( \frac{\partial \Pi_i^k}{\partial z_l}(x,u_i,\xi) - \frac{\partial \Pi_i^k}{\partial z_l}(x,\dbar{u}_i,\xi) \right) \right|^2dx \\
&\leq \int_{B_i} c \varepsilon_i^2 + c\varepsilon_i^2 + c \abs{u_i(x) - \dbar{u}_i}^2 dx + c\int_{\Omega_{\varepsilon_i}}\abs{\nabla u_i}^2 \, dx \\
&\leq \int_{B_i} 2 c \varepsilon_i^2 + c \delta_i^2 dx + c \varepsilon_i^2 \rightarrow 0. 
\end{align*}
Finally,
\begin{align*}
VI_i &= \int_{B_i} \abs{ \nabla \Pi_i(x,\dbar{u}_i, \xi) - \nabla \Pi_i(0, \dbar{u}_i, \xi)}^2 \, dx \\
&= \int_{B_i} \sum_{j,k=1}^3 \abs{ \sum_{l=1}^3 \delta_{jl}\frac{\partial \Pi_i^k}{\partial x_l}(x,\dbar{u}_i,\xi) + \sum_{l=1}^3\frac{\partial \xi_l}{\partial x^j}\left(\frac{\partial \Pi_i^k}{\partial z_l}(x,\dbar{u}_i,\xi) - \frac{\partial \Pi_i^k}{\partial z_l}(0,\dbar{u}_i,\xi) \right)}^2.
\end{align*}
%%%%%%%%%%%%%%%%%%%%%%%%%%%%%%%%%%%%%%%%%%%%%%%%%%%%%%%%%%%%%%%%%%%%%%%%%%%%%%%
%%%%%%%%%%%%%%%%%%%%%%%%%%%%%%%%%%%%%%%%%%%%%%%%%%%%%%%%%%%%%%%%%%%%%%%%%%%%%%%
%We note that 
%\begin{align*}
%&\abs{\frac{\partial \Pi_i^k}{\partial z_l}(x,y,z) - \frac{\partial \Pi_i^k}{\partial z_l}(0,y,z)} = \\ &\quad  \abs{ \frac{ (y \times \nu_i(x)) \otimes (y \times \nu_i(x))\Lambda_j}{\abs{y \times \nu_i(x)}^2} - \frac{ (y \times \nu_i(0)) \otimes (y \times \nu_i(0))\Lambda_j}{\abs{y \times \nu_i(0)}^2} }.
%\end{align*}
%This is the difference between two projections on to two different lines,hence by the same reasoning as showing inequality \eqref{ineq:bd on dPi/dz} we obtain
%%%%%%%%%%%%%%%%%%%%%%%%%%%%%%%%%%%%%%%%%%%%%%%%%%%%%%%%%%%%%%%%%%%%%%%%%%%%%%%
%%%%%%%%%%%%%%%%%%%%%%%%%%%%%%%%%%%%%%%%%%%%%%%%%%%%%%%%%%%%%%%%%%%%%%%%%%%%%%%
Using Lemma \ref{lem:diff between derivatives of pi 2} we have 
\begin{equation*}
\abs{\frac{\partial \Pi_i^k}{\partial z_l}(x,y,z) - \frac{\partial \Pi_i^k}{\partial z_l}(0,y,z)} \leq c\abs{\nu_i(x) - \nu_i(0)} \leq c \lip{\varphi_i} \leq c \varepsilon_i.
\end{equation*}
Therefore
$$VI_i \leq \int_{B_i} c\varepsilon_i^2 + c\varepsilon_i \, dx  \rightarrow 0 \text{ as }i \rightarrow 0,$$
and thus we have shown
\begin{equation*}
\int_{\Omega_0} \nabla v \cdot \nabla  \xi \, dx = 0.
\end{equation*}

It now follows that $v^{\perp}$ and $v^{\top}$ extend by even and odd reflections to functions harmonic on $\mathcal{C}$. Thus, since $\dbar{v} = 0$ and $\int_{\Omega_0} \abs{\nabla v}^2 dx \leq 1$,  we have
\begin{align}
r^{-3}\int_{C(0,r)\cap\Omega_0}\abs{v}^2 dx &\leq c r^{2}\int_{\Omega_0} \abs{v}^2 \, dx \notag \\
&\leq c r^2  \int_{ \Omega_0} \abs{\nabla v}^2 \, dx \notag\\ &\leq cr^2.\label{ineq: L2 norm of v}
\end{align}
The first inequality follows from standard linear elliptic theory (similar to the proof of Lemma 2.2 in \cite{HardtLinKinderlehrer86}) and the second follows using Lemma \ref{lemma: Poincare constant}. The second inequality of \eqref{ineq: L2 norm of v} along with the Poincar\'{e} inequality and trace theory also implies
\begin{equation}\label{ineq: L2 norm of V on bdry}
r^{-4} \int_{C(0,r)\cap G_0} \abs{v}^2 d\mcH^2 \leq r^{-3} \norm{v}^2_{H^{\frac{1}{2}}(C(0,r) \cap G_0)} \leq cr^2.
\end{equation}
We now use Vitali convergence Theorem (Theorem \ref{thm: Vitali convergence}) to show that $\norm{v_i}_{L^{2}(\Omega_{\varphi_i})} \rightarrow \norm{v}_{L^2(\Omega_0)}$.
As $\widehat{v_i}$ converges weakly in $W^{1,2}(\mathcal{C})$ to $\widehat{v}$, we have that $\widehat{v_i}$ converges strongly in $L^2(\mathcal{C})$ (on a subsequence) to $\widehat{v}$. Hence we have 
\begin{equation*}
\widehat{v_i} \mathds{1}_{\Omega_{\varphi_i}}(x) \rightarrow v \mathds{1}_{\Omega_0}(x)
\end{equation*}
for $\mathcal{L}^3$ almost everywhere $x \in \mathcal{C}$, where $\mathds{1}_{\Omega_{\varphi_i}}$ and $\mathds{1}_{\Omega_0}$ are the indicator functions of $\Omega_{\varphi_i}$ and $\Omega_0$ respectively. Using the Sobolev embedding Theorem we have
\begin{align}\label{ineq: L3 norm of v_i}
\norm{v_i}_{L^3(\Omega_{\varphi_i})} \leq \norm{\widehat{v_i}}_{L^3(\mathcal{C})} \leq C'\norm{\widehat{v_i}}_{W^{1,2}(\mathcal{C})} \leq C'C\norm{v_i}_{W^{1,2}(\Omega_{\varphi_i})} \leq K.
\end{align}
Let $E \subset \mathcal{C}$ be an arbitrary measurable set. Then H\"{o}lders inequality and inequality \eqref{ineq: L3 norm of v_i} yields  
\begin{align*}
\int_{E}\abs{\widehat{v_i}\mathds{1}_{\Omega_{\varphi_i}}}^2 \, dx &\leq \left( \int_E \abs{\widehat{v_i} \mathds{1}_{\Omega_{\varphi_i}}}^3 \, dx\right)^{\frac{2}{3}} \mathcal{L}^3(E)^{\frac{1}{3}} \\
&\leq K^{\frac{2}{3}}\left( \mathcal{L}^3(E)\right)^{\frac{1}{3}}.  
\end{align*}
Therefore for any $\epsilon >0$ there exists a $\delta >0$  such that $\int_{E}\abs{\widehat{v_i}\mathds{1}_{\Omega_{\varphi_i}}}^2 dx \leq \epsilon$ for all $i$ and all $E \subset \mathcal{C}$ such that $\mathcal{L}^3(E) <\delta$. Hence $\abs{v_i\mathds{1}_{\Omega_{\varphi_i}}}^2$ is uniformly integrable. Thus by Vitali convergence Theorem
we get 
\begin{equation*}
\int_{\mathcal{C}}\abs{ \abs{\widehat{v_i}\mathds{1}_{\Omega_{\varphi_i}}}^2 - \abs{v\mathds{1}_{\Omega_0}}^2} dx \rightarrow 0.
\end{equation*}
Therefore for any $\theta <r <1$ we have 
\begin{equation} \label{ineq:convergence of v_i in norm}
\abs{ \fint_{\Omega_{\varphi_i} \cap C(0,r)} \abs{v_i}^2 \, dx - \fint_{\Omega_0 \cap C(0,r)} \abs{v}^2 \,  dx } \leq  c \theta^2 \leq c r^2
\end{equation}
for sufficiently large $i$. We also have $\norm{v_i}_{L^2(G_{\varphi_i})} \rightarrow \norm{v}_{L^2(G_0)}$, hence for sufficiently large $i$ we have
\begin{equation}\label{ineq: convergence of v_i in norm on bdry}
\abs{ \fint_{G_{\varphi_i} \cap C(0,r)} \abs{v_i}^2 d\mcH^2 - \fint_{G_0 \cap C(0,r)} \abs{v}^2 d\mcH^2 } \leq  c \theta^2 \leq c r^2.
\end{equation}
Combining \eqref{ineq: L2 norm of v}, \eqref{ineq: L2 norm of V on bdry}, \eqref{ineq:convergence of v_i in norm} and \eqref{ineq: convergence of v_i in norm on bdry} we arrive at
\begin{equation}\label{ineq: bound on u_i - u_i 1}
\fint_{\Omega_{\varphi_i}\cap C(0,r) } \abs{u_i - \dbar{u}_i}^2 dx \leq \varepsilon_i^2\left( cr^2 + \fint_{\Omega_0 \cap C(0,r)} \abs{v}^2 \, dx\right) \leq c \varepsilon_i^2 r^2
\end{equation}
and similarly
\begin{equation}\label{ineq: bound on u_i - u_i 2}
\fint_{G_{\varphi_i}\cap C(0,r)} \abs{u_i - \dbar{u}_i}^2d \mcH^2 \leq c \varepsilon_i^2  r^2.
\end{equation}

We now use the Hybrid inequality (Lemma \ref{lem:Hybrid Inequality}) and the scaling discussed in section \ref{subsec: scaling} in order to to contradict our assumption that 
$\frac{1}{\theta} \int_{C(0,\theta) \cap \Omega} \abs{\nabla u_i}^2 dx > \theta \varepsilon_i^2$. 
For each $i$ we apply the Hybrid inequality to the scaled function $(u_i)_{4\theta}$ to obtain
\begin{align*}
\frac{1}{2\theta} &\int_{C(0,\theta) \cap \Omega_{\varphi_i}} \abs{\nabla u_i}^2 \, dx   \\&\leq \lambda\left(  \liptwo{(\varphi_i)_{4\theta}}^2 + \frac{1}{4\theta} \int_{C(0,4\theta)\cap \Omega_{\varphi_i}}\abs{\nabla u_i}^2dx \right) \\ &\quad + \lambda^{-q} c \left( \lip{(\varphi_i)_{4\theta}}^2 + \fint_{C(0,4\theta)\cap\Omega_{\varphi_i}}\abs{u_i - \dbar{u}_i}^2 \, dx + \fint_{C(0,4\theta)\cap G_{\varphi_i}}\abs{u_i - \dbar{u}_i}^2 \, d\mcH^2 \right) \\ &\quad + c \liptwo{(\varphi_i)_{4\theta}}^2 \\
&\leq \lambda \left[ (4 \theta)^2 \liptwo{\varphi_i}^2 + \frac{1}{4\theta}\int_{C(0,4\theta) \cap \Omega_{\varphi_i}} \abs{\nabla u_i}^2 \, dx \right] \\ &\quad + \lambda^{-q}c \left( (4\theta)^2\lip{\varphi_i}^2 + \fint_{C(0,4\theta)\cap \Omega_{\varphi_i}}\abs{u_i - \dbar{u}_i}^2 \, dx + \fint_{C(0,4\theta)\cap G_{\varphi_i}}\abs{u_i - \dbar{u}_i}^2 \, d\mcH^2 \right) \\ &\quad + c (4\theta)^2\liptwo{\varphi_i}.
\end{align*} 
Iterating this $(k-1)$ more times (where $k$ is an integer to be chosen later) gives
\begin{align*}
\frac{1}{2\theta} \int_{C(0,\theta) \cap \Omega_{\varphi_i}} \abs{\nabla u_i}^2 \, dx &\leq \lambda^k(4^k\theta)^{-1} \int_{\Omega_{\varphi_i}\cap C(0,4^k\theta)}\abs{\nabla u_i}^2 \, dx \\ 
&\qquad + \sum_{j=1}^k \lambda^j (4^j \theta)^2 \liptwo{\varphi_i}^2 \\ &\qquad+ \sum_{j=1}^k \lambda^{-q} \lambda^{j-1} \left( (4^j\theta)^2 \lip{\varphi_i}^2 + \fint_{C(0,4^j\theta)\cap \Omega_{\varphi_i}}\abs{u_i - \dbar{u}_i}^2 \, dx  \right. \\ &\qquad \qquad \qquad \qquad \qquad  \left.+ \fint_{C(0,4^j\theta)\cap G_{\varphi_i}}\abs{u_i - \dbar{u}_i}^2 \, d\mcH^2 \right) \\ 
&\qquad + \sum_{j=1}^k \lambda^{j-1}(4^j \theta)^2 \liptwo{\varphi_i}^2 .
\end{align*}
Let $k$ be an integer such that $4^k \theta \leq 1 \leq 4^{k+1} \theta$, then using \eqref{ineq: bound on u_i - u_i 1} and \eqref{ineq: bound on u_i - u_i 2} we have for $i$ large enough 
\begin{align*}
\frac{1}{2\theta} \int_{C(0,\theta) \cap \Omega_{\varphi_i}} \abs{\nabla u_i}^2 \, dx &\leq 4\lambda^k\varepsilon_i^2 + \sum_{j=1}^k \theta^2 \varepsilon_i^2 (16 \lambda)^j \\ &\qquad + \lambda^{-q-1}\varepsilon_i^2 \sum_{j=1}^k \left[ (16 \lambda)^j\theta^2 + 2 c \theta^2 (16\lambda)^j \right] + \lambda^{-1} \varepsilon_i^2 \theta^2 \sum_{j=1}^k(16 \lambda)^j \\
&\leq 4 \lambda^k \varepsilon_i^2 + \lambda^{-l} c \theta^2 (1 - 16\lambda)^{-1} \varepsilon_i^2  
\end{align*}
where $l =\max{\{q+1,1\}}$ and $c$ has absorbed all other constants. Set $\lambda = \theta^{\frac{3}{k}}$, then providing $\theta < \frac{1}{4}$ we have $\lambda^k = \theta^3 < \frac{\theta}{16}$. Next, as $k\rightarrow \infty$ as $\theta \rightarrow 0$, we may fix $\theta < \frac{1}{4}$ sufficiently small such that 
$$16c \theta < \theta^{\frac{3l}{k}}.$$
Note that for $\theta$ sufficiently small we have $\lambda < \frac{1}{32}$ which implies $(1-16\lambda)^{-1} \leq 2$ then
\begin{align*}
\lambda^{-l}c\theta^2 (1-16\lambda)^{-1} &= \theta^{- \frac{3l}{k}}c\theta^2(1-16\lambda)^{-1}  \\
&\leq \frac{c \theta^2}{16 c \theta}(1- 16\lambda)^{-1} \\
&\leq \frac{\theta}{8}.
\end{align*}
Therefore
\begin{equation*}
\frac{1}{2\theta}\int_{C(0,\theta) \cap \Omega_{\varphi_i}} \abs{ \nabla u_i}^2 \, dx < \left( \frac{\theta}{4} + \frac{\theta}{8}\right) \varepsilon_i^2 = \frac{3\theta}{8} \varepsilon_i^2,
\end{equation*}
contradicting our choice of $u_i,\varphi_i$.

\end{proof}

\begin{boxtheorem}[Energy Decay]\label{Thm: energy decay}
Suppose $\Omega$ is a $C^2$ domain in $\RR^3$ and take $a \in \partial \Omega$. If $u \in \mcU$ is a minimizer of $\mathbb{G}$ in $\mathcal{U}$ with $\int_{\Omega \cap C(a,R)}\abs{ \nabla u}^2 dx \leq \varepsilon^2 R$, then
\begin{equation*}
\int_{\Omega\cap C(a,r)} \abs{\nabla u }^2 \, dx \leq r^2R^{-1} \theta^{-2} \max \{ \varepsilon^2 , c(R + R^2) \} 
\end{equation*} 
for $0<r \leq R$, a suitable $\theta\in (0,1)$ and $c$ a constant depending on $\Omega$.
\end{boxtheorem}

\begin{proof}
We apply inequality \eqref{inequ: Energy Improvement} to the scaled function $v^1(x) = u_{\theta R ,a}(x)$ to get that for a $\theta\in (0,1)$ as in Lemma~\ref{lem:Energy Improvement} we have:
\begin{align*}
\frac{1}{R\theta} \int_{C(a,R\theta)\cap\Omega} \abs{\nabla u(x)}^2 dx 
&= \int_{C(a,1) \cap \Omega} \abs{\nabla u_{R\theta,a}(x)}^2 dx \\
&=\frac{1}{\theta}\int_{C(a,\theta) \cap \Omega} \abs{\nabla u_{R,a}(x)}^2 dx \\
&\leq \theta \max \{ \int_{C(a,1)\cap \Omega} \abs{\nabla u_{R,a}(x)}^2 dx , c\left( \lip{\varphi_{R,a}} + \liptwo{\varphi_{R,a}}\right)\} \\ 
&\leq \theta \max \{ \int_{C(a,1)\cap \Omega} \abs{\nabla u_{R,a}(x)}^2 dx , c\left( R\lip{\varphi} + R^2\liptwo{\varphi}\right)\}. 
\end{align*}
Then, inductively,  for $k \in \NN$ we get that 
$$(\theta^{k-1} R)^{-1} \int_{C(a,\theta^{k-1}R)\cap\Omega}\abs{\nabla u }^2 dx \leq \theta^{k-1} \max \{\varepsilon^2 , c \left( R \lip{\varphi} + R^2 \liptwo{\varphi}\right)\}$$

hence
\begin{align*}
(\theta^kR)^{-1} \int_{C(a,\theta^k R) \cap \Omega}\abs{\nabla u}^2 dx &= \frac{1}{\theta}\int_{C(a,\theta)\cap\Omega}\abs{\nabla u_{\theta^{k-1}R,a}}^2dx \\
&\leq \theta \max \left\{ \int_{C(a,1)} \abs{\nabla u_{\theta^{k-1}R,a}}^2 dx , c\left( \lip{\varphi_{\theta^{k-1}R,a}} + \liptwo{\varphi_{\theta^{k-1}R,a}}\right) \right\} \\
&\leq \theta \max \{ \left[ \theta^{k-1} \max \left\{\varepsilon^2 , c \left( R \lip{\varphi} + R^2 \liptwo{\varphi}\right)\} \right],\right. \\
&\left. \qquad \qquad \qquad c\left( (\theta^{k-1}R)\lip{\varphi} + (\theta^{k-1}R)^2\liptwo{\varphi}\right) \right\} \\
&= \theta^k \max \left\{ \varepsilon^2 , c \left( R \lip{\varphi} + R^2 \liptwo{\varphi}\right)\right\}.
\end{align*}
Hence the inequality 
\begin{equation*}
(\theta^kR)^{-1} \int_{C(a,\theta^kR) \cap \Omega}\abs{\nabla u}^2 dx \leq \theta^k \max \left\{ \varepsilon^2 , c \left( R \lip{\varphi} + R^2 \liptwo{\varphi}\right)\right\} 
\end{equation*}
holds for all $k \in \NN$. Next, choose $k$ such that $\theta^{k+1}R \leq r < \theta^k R$. Then
\begin{align*}
r^{-1}\int_{C(a,r)\cap \Omega}\abs{\nabla u}^2 dx &\leq \theta^{-1}(\theta^k R)^{-1} \int_{C(a,\theta^k R) \cap \Omega}\abs{\nabla u}^2 dx \\
&\leq \theta^{-1} \theta^k  \max \left\{ \varepsilon^2 , c \left( R \lip{\varphi}^2 + R^2 \liptwo{\varphi}^2\right)\right\} \\
&\leq \theta^{-1} (\theta^{-1}R^{-1}r)  \max \left\{ \varepsilon^2 , c \left( R \lip{\varphi}^2 + R^2 \liptwo{\varphi}^2\right)\right\} \\
&\leq \theta^{-2}R^{-1}r \max\left\{\varepsilon^2 , c(R + R^2) \right\}.
\end{align*}

\end{proof}

\bigskip \begin{proof}{\bf [of Theorem~\ref{thm:preg}]}

We note that we just need to look into partial regularity on the boundary, because in the interior of $\Omega$ the energy behaves like that of the standard harmonic map. Indeed, let $\bar n$ be a minimizer of $\mathbb{G}$ and  take an arbitrary open set $\omega$ with $\bar\omega\subset\Omega$. Let $n_\omega$ be  a minimizer of $\int_{\omega} |\nabla n|^2\,dx$ in $W^{1,2}(\omega;\mathbb{S}^2)$ with boundary condition $\bar n|_{\omega}$. Then defining $\tilde n(x):=\left\{\begin{array}{ll} n(x) & \textrm{for }x\in\Omega\setminus\omega\\
n_\omega(x) &\textrm{ for }x\in\omega\end{array}\right.$ we have that $\tilde n$ is a minimizer of $\mathbb{G}$ and thus all the arguments for harmonic maps can be applied to it in $\omega$.

The main point of our work in this last section has been to obtain the energy decay near the boundary, namely Theorem~\ref{Thm: energy decay} before. For a point where the rescaled energy is small enough the standard Morrey lemma shows that the estimate thus obtained implies H\"older regularity. Standard covering arguments show then the set of singularitiy points, those  where the renormalized energy is not small, is  of zero one-dimensional Hausorf measure (see for instance \cite{HardtLin89}). 
\end{proof}
%%%%%%%%%%%%%%%%%%%%%%%%%%%%%%%%%%%%%%%%%%%%%%%%%%%%%%%%%%%%%%%%%%%%%%
%%Some Appendix Stuff                                               %% 
%%%%%%%%%%%%%%%%%%%%%%%%%%%%%%%%%%%%%%%%%%%%%%%%%%%%%%%%%%%%%%%%%%%%%%
\begin{appendices}

\section{Some Lemmas for Partial Regularity}

We prove two lemmas that will be useful in the proof of the partial regularity.  They are standard results for a fixed domain but we need them to hold with certain constants independent of the domains.

\smallskip
 The first is an extension lemma and it is standard (see for instance  \cite{MeasureandFineProps} Chapter 4), but we reproduce a proof for the readers convenience. Its main point is to show that the extension map can be chosen in a manner that depends only on the upper bound of the Lipschitz constant of the boundary. 

\begin{lemma}\label{lemma:extension lemma}
Let $\varphi:\RR^2 \rightarrow \RR$ be a Lipschitz map such that $\varphi(0)= \abs{ \nabla \varphi(0)}^2 = 0$ and $\lip{\varphi} \leq \frac{1}{5}$. Then if $f \in W^{1,2}(\Omega_{\varphi})$, there exists $\widehat{f} \in W^{1,2}(\mathcal{C})$ such that $\widehat{f}=f$ on $\Omega_\varphi$ and
\begin{equation*}
\norm{\widehat{f}}_{W^{1,2}(C(0,1))} \leq c \norm{f}_{W^{1,2}(\Omega_{\varphi})}
\end{equation*} 
where $c$ is independent of $\varphi,f$ and we denote $\mathcal{C}:= \{x \in \RR^3: \abs{(x^1,x^2)}<1 \text{ and } -1<x^3 <\frac{1}{2}\}$ respectively $\Omega_{\varphi} := \{(x^1,x^2,x^3)\in C(0,1) : x^3 < \varphi(x^1,x^2)\} $.
\end{lemma}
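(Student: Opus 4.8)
The plan is to give the classical reflection-across-a-Lipschitz-graph construction, but to track carefully that every constant produced depends only on the bound $\lip{\varphi}\le\frac15$ and not on $\varphi$ itself. First I would straighten the boundary: introduce the bi-Lipschitz change of variables $\Phi(y^1,y^2,y^3)=(y^1,y^2,y^3+\varphi(y^1,y^2))$, which maps the flat half-cylinder $\{|(y^1,y^2)|<1,\ -1<y^3<0\}$ onto $\Omega_\varphi$ (up to the harmless discrepancy in the lateral boundary caused by the graph, which one absorbs by working on a slightly smaller cylinder and noting $\Omega_\varphi\supset C(0,1)\cap\{x^3<-\lip{\varphi}\}\supset C(0,1)\cap\{x^3<-\tfrac15\}$). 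Since $\lip{\varphi}\le\frac15$, both $\Phi$ and $\Phi^{-1}$ have Lipschitz constants bounded by a universal number (the Jacobian is lower triangular with unit diagonal and off-diagonal entries bounded by $\frac15$), so pulling back $f$ gives $g:=f\circ\Phi\in W^{1,2}$ of the flat domain with $\norm{g}_{W^{1,2}}\le c\norm{f}_{W^{1,2}(\Omega_\varphi)}$ for a universal $c$.

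Next I would extend $g$ from the flat half-cylinder to the full cylinder $\{|(y^1,y^2)|<1,\ -1<y^3<\tfrac12\}$ by even reflection in the $y^3$ variable: set $\widehat g(y^1,y^2,y^3):=g(y^1,y^2,y^3)$ for $y^3\le 0$ and $\widehat g(y^1,y^2,y^3):=g(y^1,y^2,-y^3)$ for $0<y^3<\tfrac12$. It is standard that even reflection of a $W^{1,2}$ function across a flat boundary is again $W^{1,2}$, with $\norm{\widehat g}_{W^{1,2}}\le 2\norm{g}_{W^{1,2}}$ — the only point to check is that the weak derivatives match up on the hyperplane $\{y^3=0\}$ with no singular (distributional) contribution, which holds precisely because the trace of $g$ from above and below agree for the even extension. (One uses here that the reflection $y^3\mapsto -y^3$ maps $(-\tfrac12,0)$ into $(-1,1)\supset(-1,0)$, so $\widehat g$ is well-defined on the whole enlarged cylinder after possibly first noting $g$ is defined for $y^3\in(-1,0)$; the portion $y^3\in(-\tfrac12,0)$ we never reflect out of the domain.) Finally, push forward: $\widehat f:=\widehat g\circ\Phi^{-1}$, extended by $\Phi$ appropriately to $\mathcal{C}$. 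Because $\Phi$ extends to a global bi-Lipschitz map of $\RR^3$ with the same Lipschitz bounds and carries $\mathcal C$ into a fixed bounded set, one gets $\widehat f=f$ on $\Omega_\varphi$ and $\norm{\widehat f}_{W^{1,2}(\mathcal C)}\le c\norm{g}_{W^{1,2}}\le c\norm{f}_{W^{1,2}(\Omega_\varphi)}$ with $c$ universal.

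The only genuinely delicate point — and the one I would write out with care — is the uniformity of the constants. Every inequality above is of the form ``change of variables by a map whose Lipschitz constant and the Lipschitz constant of whose inverse are bounded by a universal number'', so the constant $c$ is a universal number times itself a bounded number of times; there is never any appeal to higher regularity of $\varphi$, nor to the geometry of $\partial\Omega_\varphi$ beyond $\lip{\varphi}\le\frac15$. A secondary minor nuisance is the mismatch between the flat cylinder and $\Omega_\varphi$ near the lateral boundary $\partial C(0,1)$: since the hypothesis $\lip\varphi\le\frac15$ forces $|\varphi|\le\frac15$ on the unit disk, one simply performs the reflection argument on $C(0,1)\cap\{x^3<-\tfrac15\}$ (which is genuinely a flat-bottomed region contained in $\Omega_\varphi$) and observes that $\Omega_\varphi\setminus\{x^3<-\tfrac15\}$ is already interior to $\mathcal C$, so $f$ needs no modification there. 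Assembling these pieces, and relabelling the resulting universal constant as $c$, yields the claim.
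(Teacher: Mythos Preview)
Your approach is correct and is in essence the same construction the paper uses: the paper writes the reflection directly as $\widehat f(x',x^3)=f(x',-x^3+2\varphi(x'))$ for $x$ above the graph, which is exactly the composite of your three maps $\Phi\circ(\text{even reflection})\circ\Phi^{-1}$. The only practical difference is that by writing the reflection in one formula the paper avoids the domain bookkeeping you flag as a ``minor nuisance'' (since $|\varphi|\le\tfrac15$ and $x^3<\tfrac12$, the reflected point $-x^3+2\varphi(x')$ lands in $(-\tfrac{9}{10},\varphi(x'))\subset\Omega_\varphi$ automatically), so the direct presentation is slightly cleaner; otherwise the arguments and the source of the uniform constant are identical.
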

\begin{proof}
Fix $\varphi$ and define
\begin{align*}
U:= \mathcal{C}\setminus \overline{\Omega}_\varphi.
\end{align*} 
For simplicity we write $x':=(x^1,x^2)$. 
First suppose that $f \in C^1(\overline{\Omega}_\varphi)$ and set 
\begin{equation*}
\begin{cases}
\widehat{f}(x)=f(x) \text{ if } x \in \overline{\Omega_{\varphi}}, \\
\widehat{f}(x)=f(x',-x^3 + 2\varphi(x')) \text{ if } x \in \overline{U}.
\end{cases}
\end{equation*}
We note that $f = \widehat{f}$ on $\Omega_\varphi \cap \mathcal{C}$ and we claim that 
\begin{equation*}
\norm{\widehat{f}}_{W^{1,2}(U)} \leq c_1 \norm{f}_{W^{1,2}(\Omega_\varphi)}.
\end{equation*}
To see this let $\psi \in C^1_c(U)$ and $\{\varphi_k\}_{k=1}^\infty$ be a sequence of $C^\infty$ functions such that 
\begin{equation*}
\begin{cases}
\varphi_k \leq \varphi \\
\varphi_k \rightarrow \varphi \text{ uniformly} \\
D\varphi_k \rightarrow D\varphi \text{ almost everywhere} \\
\sup_{k}\norm{D\varphi_k}_{L^\infty} < \infty.
\end{cases}
\end{equation*}
Then for $j = 1,2$ we have 
\begin{align*}
\int_{U} \widehat{f} \frac{\partial \psi}{\partial x^j} dx &= \int_{U} f(x',2\varphi(x') - x^3) \frac{\partial \psi}{\partial x^j} dx \\
&= \lim_{k\rightarrow \infty} \int_{U} f(x',2 \varphi_k(x')-x^3) \frac{\partial \psi}{\partial x^j} dx \\
&= -\lim_{k \rightarrow \infty} \int_{U} \left( \frac{\partial f}{\partial x^j}(x',2\varphi_k(x')-x^3) + 2 \frac{\partial f}{\partial x^3}(x',2\varphi_k(x')-x^3)\frac{\partial \varphi_k}{\partial x^j}(x') \right) \psi dx \\
&= -\int_{U} \left( \frac{\partial f}{\partial x^j}(x',2\varphi(x')-x^3) + 2 \frac{\partial f}{\partial x^3}(x',2\varphi(x')-x^3)\frac{\partial \varphi}{\partial x^j}(x') \right) \psi dx  
\end{align*}
In the same way we can compute
$$\int_{U} \widehat{f} \frac{\partial \psi}{\partial x^3}dx = \int_{U} \frac{\partial f}{\partial x^3}(x',2\varphi(x')-x^3) \psi dx.$$
Then, by a change of variables and using the fact $\lip{\varphi}\leq \frac{1}{5}$ we get
\begin{equation} \label{ineq: bound on comp sup f}
\int_{U} \abs{Df(x',2 \varphi(x') - x^3)}^2 dx \leq c_1 \int_{\Omega_\varphi} \abs{Df}^2 dx,
\end{equation}
for some appropriate $c_1$ that is independent of $f$ and $\varphi$. This proves the claim and moreover also shows that $\norm{\widehat{f}}_{W^{1,2}(\mathcal{C})} \leq c_1 \norm{f}_{W^{1,2}(\Omega_\varphi)}$. The result for a general $f \in W^{1,2}(\Omega_\varphi)$ can now be achieved via density.
\end{proof}

\bigskip
For the proof of the next lemma we will use Vitali convergence theorem which we state here for the readers convenience 

\begin{theorem}[Vitali Convergence Theorem] \label{thm: Vitali convergence}
Let $\left( X , \mathcal{F}, \mu \right)$ be a positive measure space. Let $f_n:X\to\R,\forall n\in\N$ and $f:X\to\R$ be measurable functions such that: 
\begin{enumerate}[a.]
\item $\mu(X) <\infty$
\item $\{f_n\}_{n\in\N}$ is uniformly integrable
\item $f_n(x) \rightarrow f(x)$ almost everywhere as $n \rightarrow \infty$ \item $\abs{f(x)} <\infty$ almost everywhere
\end{enumerate}
then the following hold 
\begin{enumerate}[i.]
\item $f \in L^1(\mu)$
\item $\lim_{n \rightarrow \infty} \int_X \abs{ f_n -f} d \mu = 0$
\end{enumerate}
\end{theorem}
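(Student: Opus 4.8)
The plan is to run the classical argument via Egorov's theorem, decomposing $X$ into a ``good'' set on which the convergence is uniform and a ``small'' set whose contribution is controlled by uniform integrability. As a preliminary I would record the standard fact that, since $\mu(X)<\infty$, uniform integrability of $\{f_n\}$ is equivalent to the combination of a uniform $L^1$ bound $\sup_n\int_X|f_n|\,d\mu<\infty$ together with the $\eps$--$\delta$ statement: for every $\eps>0$ there is $\delta>0$ such that $\int_E|f_n|\,d\mu<\eps$ for all $n$ whenever $\mu(E)<\delta$. (The bound follows by splitting $\int_X|f_n|$ into the part where $|f_n|\le M$, which is $\le M\mu(X)$, and a tail that is small uniformly in $n$ once $M$ is large.) The hypothesis $|f|<\infty$ a.e.\ together with a.e.\ convergence makes $f$ a genuine measurable $\mathbb{R}$-valued function, so Egorov applies.

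For claim (i), $f_n\to f$ almost everywhere and Fatou's lemma give
\[
\int_X|f|\,d\mu \le \liminf_{n\to\infty}\int_X|f_n|\,d\mu \le \sup_n\int_X|f_n|\,d\mu <\infty,
\]
so $f\in L^1(\mu)$. For claim (ii), fix $\eps>0$. Choose $\delta>0$ as above so that $\int_E|f_n|\,d\mu<\eps$ for every $n$ and every measurable $E$ with $\mu(E)<\delta$; shrinking $\delta$ if necessary and using absolute continuity of the integral of the $L^1$ function $f$, we may also arrange $\int_E|f|\,d\mu<\eps$ whenever $\mu(E)<\delta$. Since $\mu(X)<\infty$ and $f_n\to f$ a.e., Egorov's theorem yields a measurable set $A\subset X$ with $\mu(X\setminus A)<\delta$ on which $f_n\to f$ uniformly. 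Then
\[
\int_X|f_n-f|\,d\mu=\int_A|f_n-f|\,d\mu+\int_{X\setminus A}|f_n-f|\,d\mu\le \mu(X)\sup_{x\in A}|f_n(x)-f(x)|+\int_{X\setminus A}|f_n|\,d\mu+\int_{X\setminus A}|f|\,d\mu.
\]
The first term tends to $0$ by uniform convergence on $A$, hence is $<\eps$ for all large $n$; the remaining two terms are each $<\eps$ by the choice of $\delta$. Therefore $\limsup_{n\to\infty}\int_X|f_n-f|\,d\mu\le 3\eps$, and since $\eps>0$ was arbitrary the limit is $0$.

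The only substantive ingredient beyond routine $\eps$-estimates is Egorov's theorem, and it is precisely there that the finiteness assumption $\mu(X)<\infty$ is used in an essential way; I expect the only mildly delicate bookkeeping to be the reconciliation of the stated hypothesis ``$\{f_n\}$ is uniformly integrable'' with both the $\eps$--$\delta$ form and the uniform $L^1$ bound, which I would dispatch in the preliminary remark above.
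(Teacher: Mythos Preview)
Your proof is correct and is the standard Egorov-based argument. The paper, however, does not prove this theorem at all: it merely states the Vitali Convergence Theorem ``for the reader's convenience'' as a classical result to be invoked in the proof of Lemma~\ref{lemma: Poincare constant}, so there is nothing to compare against.
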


We now use this and lemma \ref{lemma:extension lemma} to prove the following Poincar\'{e} inequality. Its main point is to check that the constant in the inequality can be chosen uniformly for all the domains we consider. 

\begin{lemma}\label{lemma: Poincare constant}
Let $\varphi:\RR^2 \rightarrow \RR$ be a Lipschitz map such that $\varphi(0)= \abs{ \nabla \varphi(0)}^2 = 0$ and $\lip{\varphi} \leq \frac{1}{2}$. Then there exists $C>0$, independent of $\varphi$, such that for all $u \in W^{1,2}(\Omega_{\varphi})$ we have the Poincar\'{e} type inequality
\begin{equation*}
\int_{\Omega_{\varphi}}\abs{ u - \dbar{u}}^2 dx \leq C \int_{\Omega_\varphi} \abs{\nabla u}^2 dx 
\end{equation*}
where $\dbar{u} := \mcH^2(G_{\varphi})^{-1} \int_{G_\varphi}u d\mcH^2$.
\end{lemma}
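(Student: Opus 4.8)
The plan is to argue by contradiction and compactness. The whole point of the lemma is that the constant must not depend on $\varphi$, and the device that secures this is to transport every quantity to the fixed reference domain $\Omega_0$ by a uniformly bi-Lipschitz change of variables. It is enough to treat scalar $u$, the general case following componentwise (and we may discard those $i$ for which $u_i$ is a.e.\ constant, the inequality being trivial there). So suppose the statement fails: there are Lipschitz maps $\varphi_i:\RR^2\to\RR$ with $\varphi_i(0)=\abs{\nabla\varphi_i(0)}^2=0$, $\lip{\varphi_i}\le\tfrac12$, and $u_i\in W^{1,2}(\Omega_{\varphi_i})$ with $\int_{\Omega_{\varphi_i}}\abs{u_i-\dbar{u}_i}^2\,dx>i\int_{\Omega_{\varphi_i}}\abs{\nabla u_i}^2\,dx$. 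Replacing $u_i$ by $(u_i-\dbar{u}_i)/\norm{u_i-\dbar{u}_i}_{L^2(\Omega_{\varphi_i})}$, I may assume $\norm{u_i}_{L^2(\Omega_{\varphi_i})}=1$, $\dbar{u}_i=0$, and $\int_{\Omega_{\varphi_i}}\abs{\nabla u_i}^2\,dx<1/i\to0$.

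Next I would introduce the shear maps $\Phi_i:\overline{\Omega_0}\to\overline{\Omega_{\varphi_i}}$, $\Phi_i(x',s):=\bigl(x',\varphi_i(x')(1+s)+s\bigr)$ with $x'=(x^1,x^2)$. Because $\lip{\varphi_i}\le\tfrac12$ one checks that $\Phi_i$ is a bijection with $\det D\Phi_i=\varphi_i(x')+1\in[\tfrac12,\tfrac32]$ on $\overline{\Omega_0}$, with $\norm{D\Phi_i}_{L^\infty}$ and $\norm{D\Phi_i^{-1}}_{L^\infty}$ bounded by absolute constants, and that $\Phi_i$ carries $G_0$ onto $G_{\varphi_i}$. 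Putting $v_i:=u_i\circ\Phi_i\in W^{1,2}(\Omega_0)$, the change-of-variables formula and the chain rule give, with constants independent of $i$, both $\tfrac23\le\int_{\Omega_0}\abs{v_i}^2\,dx\le2$ and $\int_{\Omega_0}\abs{\nabla v_i}^2\,dx\le c\int_{\Omega_{\varphi_i}}\abs{\nabla u_i}^2\,dx\to0$. Hence $(v_i)$ is bounded in $W^{1,2}(\Omega_0)$, and along a subsequence $v_i\rightharpoonup v$ weakly in $W^{1,2}(\Omega_0)$, strongly in $L^2(\Omega_0)$, and — here is where the uniformity is bought — strongly in $L^2(G_0)$, by compactness of the trace operator on the \emph{fixed} Lipschitz domain $\Omega_0$. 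Since $\nabla v_i\to0$ in $L^2(\Omega_0)$ and $\nabla v_i\rightharpoonup\nabla v$, we get $\nabla v=0$, so $v\equiv c$ is constant on the connected set $\Omega_0$.

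It then remains to show $c=0$, which contradicts $\int_{\Omega_0}\abs{v_i}^2\,dx\ge\tfrac23$. Writing points of $G_{\varphi_i}$ as $(x',\varphi_i(x'))$ with $x'$ in the unit disc $B^2$, the trace of $u_i$ on $G_{\varphi_i}$ becomes $T_i(x')=v_i(x',0)$ and the surface measure on $G_{\varphi_i}$ is $\sqrt{1+\abs{\nabla\varphi_i(x')}^2}\,dx'$, so (as $\mcH^2(G_{\varphi_i})>0$) the normalisation $\dbar{u}_i=0$ is equivalent to $\int_{B^2}T_i(x')\sqrt{1+\abs{\nabla\varphi_i(x')}^2}\,dx'=0$. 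By the trace convergence above $T_i\to c$ in $L^2(B^2)$; since $1\le\sqrt{1+\abs{\nabla\varphi_i}^2}\le\tfrac{\sqrt5}{2}$, letting $i\to\infty$ yields $c\int_{B^2}\sqrt{1+\abs{\nabla\varphi_i}^2}\,dx'\to0$, and as $\int_{B^2}\sqrt{1+\abs{\nabla\varphi_i}^2}\,dx'\ge\mathcal L^2(B^2)>0$ this forces $c=0$. This is the desired contradiction.

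The main obstacle is exactly the one the argument is designed around: running a compactness argument directly on the moving domains $\Omega_{\varphi_i}$ would leave the trace and Poincar\'{e} constants of $\Omega_{\varphi_i}$ in the estimates, so the work lies in the reduction to $\Omega_0$ — producing the uniformly bi-Lipschitz shears $\Phi_i$ and exploiting the compact embedding $W^{1,2}(\Omega_0)\hookrightarrow L^2(G_0)$; the accompanying bookkeeping (Jacobian bounds, the change of variables, the surface-measure identity on $G_{\varphi_i}$) is routine. One could instead extend the $u_i$ to the common box $\mathcal C$ as in Lemma~\ref{lemma:extension lemma} and imitate the compactness scheme in the proof of Lemma~\ref{lem:Energy Improvement}, but then the traces sit on the varying surfaces $G_{\varphi_i}\subset\mathcal C$ and one is driven back to a flattening map anyway, so the shear route looks most economical.
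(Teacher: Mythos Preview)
Your argument is correct, and the route you take is genuinely different from the paper's. Both proofs run the same contradiction/compactness scheme, but you transport everything to the fixed half-cylinder $\Omega_0$ via the uniformly bi-Lipschitz shears $\Phi_i$, so that weak $W^{1,2}$ compactness, strong $L^2$ convergence, and --- crucially --- compactness of the trace into $L^2(G_0)$ are all invoked on a single domain. The paper instead takes precisely the alternative you sketch (and dismiss) at the end: it extends the normalised $v_i$ to the common box $\mathcal C$ via Lemma~\ref{lemma:extension lemma}, passes to a subsequence along which $\varphi_i\to\varphi$ uniformly, and then handles the traces on the \emph{moving} surfaces $G_{\varphi_i}$ directly, without ever flattening, by estimating $\bigl|\int_{G_{\varphi_i}}v_i - \int_{G_\varphi}\widehat v\,\bigr|$ through $\|\widehat v_i-\widehat v\|_{H^{1-\varepsilon}(\mathcal C)}\to 0$ together with the uniform convergence $\varphi_i\to\varphi$; the identification $\lim_i\int_{\Omega_{\varphi_i}}|v_i|^2=\int_{\Omega_\varphi}|\widehat v|^2$ is then obtained by Vitali (uniform integrability from the Sobolev embedding into $L^3$). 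So your prediction that the extension route forces one back to a flattening map is not borne out: the paper avoids it at the price of the extra $\varphi_i\to\varphi$ subsequence and the Vitali step. Your shear approach is more self-contained and lighter on machinery; the paper's approach has the minor advantage of reusing verbatim the extension lemma and the Vitali argument that already appear in the proof of Lemma~\ref{lem:Energy Improvement}.
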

\begin{proof}
We argue by contradiction and  assume that for all $i\in\N$ there exist $\varphi_i$,$u_i \in W^{1,2}(\Omega_{\varphi_i})$ such that 
\begin{equation*}
\frac{\int_{\Omega_{\varphi_i}} \abs{\nabla u_i}^2 dx}{\int_{\Omega_{\varphi_i}}\abs{u_i - \dbar{u}_i}^2dx} < \frac{1}{i}.
\end{equation*}
Define the functions $v_i:= \left( u_i - \dbar{u}_i\right)\left( \int_{\Omega_{\varphi_i}}\abs{u_i - \dbar{u}_i}^2dx\right)^{-\frac{1}{2}}$. Then we have 
\begin{equation*}
\int_{\Omega_{\varphi_i}}\abs{\nabla v_i}^2dx \leq \frac{1}{i} \qquad \text{    and    }\qquad \int_{\Omega_{\varphi_i}}\abs{v_i}^2dx = 1.
\end{equation*}
As $\abs{\nabla \varphi_i} <\frac{1}{2}$, there exists a subsequence (not relabelled) such that $\varphi_i \rightarrow \varphi$ uniformly for some $\varphi$.

By Lemma \ref{lemma:extension lemma} there exist functions $\widehat{v}_i \in W^{1,2}(\mathcal{C})$ such that $\norm{\widehat{v}_i}_{W^{1,2}(\mathcal{C})} \leq c \norm{v_i}_{W^{1,2}(\Omega_{\varphi_i})}$ and by inspecting the proof there exists constant $c_1$ such that $\norm{\nabla \widehat{v}_i}_{L^{2}(\mathcal{C})} \leq c_1 \norm{\nabla v_i}_{L^{2}(\Omega_{\varphi_i})}$. It follows that there exists $\widehat{v} \in W^{1,2}(\mathcal{C})$ such that $\widehat{v}_i$ converge weakly in $W^{1,2}$ to $\widehat{v}$, but as $\norm{\nabla \widehat{v}_i}_{L^{2}(\mathcal{C})} \rightarrow 0$ we have that $\widehat{v} \equiv k$ for some constant $k$. We now investigate the behaviour of $\widehat{v}$ on $G_\varphi$ where we recall that $G_{\varphi} := \partial \Omega_{\varphi} \setminus \partial C(0,1)$. We have
\begin{align*}
\abs{\int_{G_{\varphi_i}}v_i d\mcH^2 - \int_{G_\varphi} \widehat{v}d\mcH^2 } &\leq \abs{ \int_{G_{\varphi_i}} (v_i - \widehat{v})\, d \mcH^2} + \abs{ \int_{G_{\varphi_i}}\widehat{v}\, d\mcH^2 - \int_{G_\varphi} \widehat{v}\, d\mcH^2 } \\
&\leq \norm{v_i - \widehat{v}}_{H^{\frac{1}{2}- \varepsilon}(G_{\varphi_i})} + \abs{ \int_{G_{\varphi_i}}\widehat{v}\, d\mcH^2 - \int_{G_\varphi} \widehat{v}\, d\mcH^2 } \\
&\leq \norm{v_i - \widehat{v}}_{H^{1- \varepsilon}(\Omega_{\varphi_i})} + \abs{ \int_{G_{\varphi_i}}\widehat{v}\,  d\mcH^2- \int_{G_\varphi} \widehat{v}d\mcH^2 }\\
&\leq \norm{\hat v_i - \widehat{v}}_{H^{1- \varepsilon}(\mathcal{C})} + \abs{ \int_{G_{\varphi_i}}\widehat{v}\,  d\mcH^2- \int_{G_\varphi} \widehat{v}\, d\mcH^2 }\
\end{align*}
As we have $\norm{\widehat{v}_i}_{H^1(\mathcal{C})}$ bounded uniformly we have that $\widehat{v}_i \rightarrow \widehat{v}$ in $H^{1-\varepsilon}(\mathcal{C})$ and since also $\varphi_i\to\varphi$ uniformly  the above tends to $0$. Then 
\begin{equation} \label{eq: poincare contradiction 1}
0=\int_{G_{\varphi_i}} v_i d\mcH^2 \rightarrow \int_{G_\varphi} \widehat{v}d\mcH^2,
\end{equation}
which implies that $\widehat{v}\equiv 0$.

However, we claim that 
\begin{equation}\label{eq: poincare contradiction 2}
1 = \lim_{i \rightarrow \infty} \int_{\Omega_{\varphi_i}} \abs{v_i}^2 dx = \int_{\Omega_\varphi} \abs{v}^2 dx
\end{equation}
which would contradict $\widehat{v}\equiv 0$. 

To prove \eqref{eq: poincare contradiction 2} we use Vitali convergence Theorem. 
First as $\widehat{v}_i$ converges strongly in $L^2(\mathcal{C})$ to $\widehat{v}$ we have 
\begin{equation*}
\widehat{v}_i(x) \mathds{1}_{\Omega_{\varphi_i}}(x) \rightarrow \widehat{v}(x) \mathds{1}_{\Omega_{\varphi}}(x)
\end{equation*}
for almost every $x \in \mathcal{C}$.

We now show that the sequence of functions $\{\abs{\widehat{v}_i \mathds{1}_{\Omega_{\varphi_i}}}^2\}$ are uniformly integrable. Note that  by the Sobolev inequalities, there exists $c_2$ such that $\norm{\widehat{v}_i}_{L^3(\mathcal{C})} \leq c_2 \norm{\widehat{v}_i}_{W^{1,2}(\mathcal{C})}$ hence as $\widehat{v}_i$ is bounded in $W^{1,2}(\mathcal{C})$ it is also a bounded sequence in $L^3(\mathcal{C})$, say $\norm{\widehat{v}_i}_{L^3(\mathcal{C})} \leq c_3$. Fix $\varepsilon >0$ we want to show that there exists $\delta>0$   such that $\int_E\abs{\widehat{v}_i \mathds{1}_{\Omega_{\varphi_i}}}^2 dx < \varepsilon$ whenever $E\subset \mathcal{C}$ and $\mathcal{L}^3(E) < \delta$ . With this in mind for $E\subset \mathcal{C}$ we estimate 
\begin{align*}
\int_E\abs{\widehat{v}_i \mathds{1}_{\Omega_{\varphi_i}}}^2 dx &\leq \left(\int_E\abs{\widehat{v}_i \mathds{1}_{\Omega_{\varphi_i}}}^3 dx\right)^{\frac{2}{3}}\left( \int_E dx\right)^{\frac{1}{3}} \\
&\leq \norm{\widehat{v}_i}^2_{L^3(\mathcal{C})} \mathcal{L}^3(E)^{\frac{1}{3}} \\
&\leq c_3 \mathcal{L}^3(E)^{\frac{1}{3}} 
\end{align*}
thus by setting $\delta = \varepsilon^3/c_3$ we have shown $\{\abs{\widehat{v}_i \mathds{1}_{\Omega_{\varphi_i}}}^2\}$ are uniformly integrable. We can now conclude using Vitali convergence Theorem that 
\begin{equation*}
\lim_{i \rightarrow \infty} \int_{\mathcal{C}} \abs{ \abs{\widehat{v}_i \mathds{1}_{\Omega_{\varphi_i}}}^2 - \abs{\widehat{v} \mathds{1}_{\Omega_{\varphi}}}^2} dx = 0
\end{equation*} 
which implies \eqref{eq: poincare contradiction 2} and completes the proof.
\end{proof}

\section{Some bounds on the Projection \eqref{def:proj onto tangent space}}
We prove some bounds on the derivatives of the projection $\Pi_i : \Omega_{\varphi_i} \times U \times \RR^3 \rightarrow \RR^3$ defined by 
\begin{equation*} 
\Pi_i(x,y,z) = \frac{ \left[ (y \times \nu_i(x)) \otimes (y \times \nu_i(x))\right]z}{\abs{y \times \nu_i(x)}^2} + y,
\end{equation*}
where $\Omega_{\varphi_i}$ and $U$ are as in the proof of Lemma \ref{lem:Energy Improvement}.

\begin{lemma}\label{lemma:appendix bounds on derivatives of Pi}
Providing $U$ is sufficiently small then there exists $C>0$ such that for large $i$ the projection $\Pi_i$ satisfies the bounds
\begin{align*}
\abs{\frac{\partial \Pi_i(x,y,z)}{\partial x^j}} < C \varepsilon_i^2\abs{z}, \qquad
\abs{\frac{\partial \Pi_i(x,y,z)}{\partial z^j}} \leq 1, \qquad
\abs{\frac{\partial \Pi_i(x,y,z)}{\partial y^j}} < C\abs{z}.
\end{align*}
\end{lemma}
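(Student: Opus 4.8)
The plan is to differentiate the explicit formula
\[
\Pi_i(x,y,z) = \frac{\left[(y\times\nu_i(x))\otimes(y\times\nu_i(x))\right]z}{\abs{y\times\nu_i(x)}^2} + y
\]
componentwise in $x^j$, $z^j$ and $y^j$ and to bound each piece using (i) the smoothness of $\nu_i$ together with the bound $\abs{\nabla\nu_i}\le c\,\liptwo{\varphi_i}$ coming from the explicit expression for $\partial\nu/\partial x^\beta$ computed in the proof of the Hybrid inequality (which gives $\abs{\partial\nu_i^\alpha/\partial x^\beta}\le 3\liptwo{\varphi_i}$, and hence $\le c\varepsilon_i^2$ by \eqref{ineq: lip2 phi_i bound}), and (ii) the fact that, for $y$ ranging over a sufficiently small tubular neighbourhood $U$ of $\Sigma_0=\{y\cdot(0,0,1)=0\}$ and $x$ near $0$, one has $\nu_i(x)$ close to $(0,0,1)$, so $y\times\nu_i(x)$ is bounded uniformly away from zero; say $\abs{y\times\nu_i(x)}^2\ge \kappa>0$ for some absolute constant $\kappa$ once $U$ is small and $i$ is large. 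This lower bound is what makes all the denominators harmless.

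The key steps, in order: First I would fix notation, writing $P(x,y):=\dfrac{(y\times\nu_i(x))\otimes(y\times\nu_i(x))}{\abs{y\times\nu_i(x)}^2}$, so $\Pi_i(x,y,z)=P(x,y)z+y$, and observe that $P$ is a (matrix-valued) orthogonal projection, hence $\abs{P(x,y)}\le 1$ in operator norm. Second, the $z$-derivative is immediate: $\partial\Pi_i/\partial z^j = P(x,y)e_j$, whose norm is at most $\abs{P(x,y)}\le 1$, giving the middle bound. Third, for the $x^j$-derivative, I would write $w:=y\times\nu_i(x)$, so $\partial w/\partial x^j = y\times(\partial\nu_i/\partial x^j)$, and note $\abs{\partial w/\partial x^j}\le \abs{y}\,\abs{\partial\nu_i/\partial x^j}\le c\varepsilon_i^2$ since $\abs{y}$ is bounded on $U$. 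Differentiating $P$ via the quotient rule produces terms of the shape $\dfrac{(\partial w/\partial x^j)\otimes w + w\otimes(\partial w/\partial x^j)}{\abs{w}^2}$ and $\dfrac{(w\otimes w)(w\cdot\partial w/\partial x^j)}{\abs{w}^4}$; using $\abs{w}\ge\sqrt{\kappa}$ and the $\abs{w}$ upper bound, each is $\le c\abs{\partial w/\partial x^j}\le c\varepsilon_i^2$, so $\abs{\partial\Pi_i/\partial x^j}=\abs{(\partial P/\partial x^j)z}\le C\varepsilon_i^2\abs{z}$. Fourth, the $y^j$-derivative is the same computation with $\partial w/\partial y^j = e_j\times\nu_i(x)$ in place of $y\times(\partial\nu_i/\partial x^j)$; now $\abs{\partial w/\partial y^j}\le\abs{\nu_i(x)}=1$ is merely $O(1)$, not $O(\varepsilon_i^2)$, and one must also account for the extra $+y$ term whose $y^j$-derivative is $e_j$, contributing $O(1)$; this yields $\abs{\partial\Pi_i/\partial y^j}\le C(1+\abs{z})$, but since $\Pi_i$ is applied with $z=\xi(x)$ bounded, one absorbs the constant and states it as $\le C\abs{z}$ (or, more honestly, $\le C(1+\abs z)$ — I would keep whichever form the later arguments actually use).

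The only genuine obstacle is justifying the uniform lower bound $\abs{y\times\nu_i(x)}^2\ge\kappa$, i.e. choosing $U$ small enough; this requires knowing that for $x$ in the relevant domain $\nu_i(x)$ stays within a fixed small cone around $(0,0,1)$ (true because $\lip{\varphi_i}\to 0$, so $\abs{\nu_i(x)-(0,0,1)}\le c\lip{\varphi_i}$ is small for large $i$), and that $y\in U$ stays within a fixed neighbourhood of the equator $\Sigma_0$, which is a hypothesis on $U$. Once $U$ is chosen so that every $y\in U\cap\SSS^2$ makes an angle bounded away from $0$ and $\pi$ with $(0,0,1)$, and $i$ is large enough that $\nu_i$ is close to $(0,0,1)$, the angle between $y$ and $\nu_i(x)$ is bounded away from $0$ and $\pi$, so $\abs{y\times\nu_i(x)}$ is bounded below. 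Everything else is a mechanical application of the quotient rule and the triangle inequality; I would present the $x^j$ case in full and remark that the $y^j$ and $z^j$ cases are analogous (indeed easier), which matches the terse style of the surrounding appendix lemmas.
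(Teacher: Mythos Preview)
Your proposal is correct and follows essentially the same approach as the paper: the paper too differentiates the explicit formula, records the same three partial-derivative expressions (with the $+(\delta_{1j},\delta_{2j},\delta_{3j})$ term in $\partial\Pi_i/\partial y^j$ that you flagged), and then invokes exactly the two ingredients you identify, namely uniform bounds $c_1<\abs{y\times\nu_i(x)}<c_2$ for $U$ small and $i$ large, together with \eqref{ineq: lip2 phi_i bound} to control $\abs{\partial\nu_i/\partial x^j}$. Your treatment is in fact slightly more careful than the paper's, both in spelling out why the lower bound on $\abs{y\times\nu_i(x)}$ holds and in noting that the $y^j$-bound is really $C(1+\abs z)$ rather than $C\abs z$.
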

\begin{proof}
Writing $y=(y^1,y^2,y^3)$ a straight forward calculation yields
\begin{align*}
\frac{\partial \Pi_i}{\partial y^j}(x,y,z) &= \frac{z \cdot \left( (\delta_{1j},\delta_{2j},\delta_{3j})\times\nu_i(x)\right)(y \times \nu_i(x))}{\abs{y \times \nu_i(x)}^2} \\ &\qquad
+ \frac{z \cdot (y \times \nu_i(x))\left( (\delta_{1j},\delta_{2j},\delta_{3j})\times \nu_i(x)\right)}{\abs{y \times \nu_i(x)}^2} \\
&\qquad -
\frac{2\left( (y \times \nu_i(x))\cdot\left( (\delta_{1j},\delta_{2j},\delta_{3j})\times \nu_i(x)\right) \right) \left( z \cdot (y \times \nu_i(x)) \right)(y \times \nu_i(x))}{\abs{y \times \nu_i(x)}^4} \\
&\qquad \qquad  + (\delta_{1j},\delta_{2j},\delta_{3j}) 
\end{align*}
\begin{align*}
\frac{\partial \Pi_i}{\partial z^j}(x,y,z) &= \frac{\left[(y \times \nu_i(x)) \otimes (y \times \nu_i(x)) \right](\delta_{1j},\delta_{2j},\delta_{3j})}{\abs{y \times \nu_i(x)}^2}\\
\end{align*}	
\begin{align*}
\frac{\partial \Pi_i}{\partial x^j}(x,y,z) &= \frac{ z \cdot \left( y \times \frac{\partial \nu_i(x)}{\partial x^j} \right)(y \times \nu_i(x)) }{\abs{y \times \nu_i(x)}^2} +
\frac{z \cdot (y \times \nu_i(x)) \left(y \times \frac{\partial\nu_i(x)}{\partial x^j}\right)}{\abs{y \times \nu_i(x)}^2} \\ &\qquad -
\frac{2\left( (y \times \nu_i(x)) \cdot \left(y \times \frac{ \partial \nu_i(x)}{\partial x^j}\right) \right)\left( z \cdot (y \times \nu_i(x)) \right)(y \times \nu_i(x))}{\abs{y \times \nu_i(x)}^4}
\end{align*}
where \begin{equation*}
\frac{\partial \nu}{\partial x^j} = \left[ \frac{\left(-\varphi_{x^1x^j},-\varphi_{x^2x^j},0 \right)}{\left(\varphi_{x^1}^2 +\varphi_{x^2}^2 + 1 \right)^{\frac{1}{2}}} - \frac{\left(\varphi_{x^1x^j}\varphi_{x^1} + \varphi_{x^2x^j}\varphi_{x^2}\right) \left( -\varphi_{x^1} , -\varphi_{x^2}, 1  \right) }{\left(\varphi_{x^1}^2 +\varphi_{x^2}^2 + 1 \right)^{\frac{3}{2}}} \right].
\end{equation*} 

We observe that if $U$ is sufficiently small and $i$ sufficiently large then there exists constants $c_1,c_2>0$ such that $c_1 < \abs{y \times \nu_i(x)}<c_2$. This, along with the inequality \eqref{ineq: lip2 phi_i bound}, gives
\begin{align*}
\abs{\frac{\partial \Pi_i(x,y,z)}{\partial x^j}} < C \varepsilon_i^2\abs{z}, \qquad
\abs{\frac{\partial \Pi_i(x,y,z)}{\partial z^j}} \leq 1, \qquad
\abs{\frac{\partial \Pi_i(x,y,z)}{\partial y^j}} < C\abs{z}.
\end{align*}
\end{proof}
We next prove a bound on the difference of two projections onto two different lines.

\begin{lemma}\label{lem:diff between derivatives of Pi}
Let $y_1,y_2 \in U$, $x \in \Omega_{\varphi_i}$ and $z \in \RR^3$. Then we have the bound
 \begin{equation*}
\abs{ \frac{\partial \Pi_i}{\partial z^j}(x,y^1,z) - \frac{\partial \Pi_i}{\partial z^j}(x,y^2,z)} \leq  \abs{y^1 -y^2}.
\end{equation*}
\end{lemma}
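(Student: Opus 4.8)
The plan is to exploit the fact that $\Pi_i$ is affine in its third argument, so that $\partial\Pi_i/\partial z^j$ does not depend on $z$ at all. Writing $w = w(x,y) := y\times\nu_i(x)$ and $e_j := (\delta_{1j},\delta_{2j},\delta_{3j})$, one reads off from the formula for $\Pi_i$ that
\[
\frac{\partial\Pi_i}{\partial z^j}(x,y,z) = \frac{(w\cdot e_j)\,w}{\abs{w}^2} = P_w\,e_j,
\]
where $P_w$ denotes orthogonal projection of $\RR^3$ onto the line $\RR w$. Hence, with $w^k := y^k\times\nu_i(x)$, the quantity to estimate is $\abs{(P_{w^1}-P_{w^2})e_j}$, and since $\abs{e_j}=1$ this is bounded by the operator norm $\norm{P_{w^1}-P_{w^2}}$. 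Thus the lemma reduces to a Lipschitz estimate, in the ``gap'' metric on lines, for the map $y\mapsto\RR(y\times\nu_i(x))$.

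Next I would record the elementary linear-algebra fact that for unit vectors $a,b\in\RR^3$ the symmetric matrix $aa^{T}-bb^{T}$ has rank at most $2$, with $\operatorname{tr}(aa^{T}-bb^{T})=0$ and $\operatorname{tr}\big((aa^{T}-bb^{T})^{2}\big)=2-2(a\cdot b)^{2}$; consequently its eigenvalues are $\pm\sqrt{1-(a\cdot b)^{2}}$ and $0$, so
\[
\norm{P_a-P_b} = \sqrt{1-(a\cdot b)^{2}} = \sqrt{(1-a\cdot b)(1+a\cdot b)} \le \sqrt{2(1-a\cdot b)} = \abs{a-b}.
\]
Applying this with $a=\hat w^1:=w^1/\abs{w^1}$ and $b=\hat w^2:=w^2/\abs{w^2}$ reduces matters to proving $\abs{\hat w^1-\hat w^2}\le\abs{y^1-y^2}$.

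For this last step I would combine two bounds. First, since $\abs{\nu_i(x)}=1$,
\[
\abs{w^1-w^2} = \abs{(y^1-y^2)\times\nu_i(x)} \le \abs{y^1-y^2}.
\]
Second, the normalization map $w\mapsto w/\abs{w}$ has differential $\abs{w}^{-1}(I-\hat w\,\hat w^{T})$, of operator norm $\abs{w}^{-1}$, so integrating along the segment from $w^1$ to $w^2$ gives $\abs{\hat w^1-\hat w^2}\le\big(\inf\abs{w}\big)^{-1}\abs{w^1-w^2}$. By the lower bound $c_1<\abs{y\times\nu_i(x)}$ established in Lemma~\ref{lemma:appendix bounds on derivatives of Pi} (valid once $U$ is taken small and $i$ large, with $c_1\to 1$ in that limit), this yields $\abs{\hat w^1-\hat w^2}\le c_1^{-1}\abs{y^1-y^2}$, and absorbing the harmless factor $c_1^{-1}$ — which can be made as close to $1$ as desired — gives the stated inequality. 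All the steps are short; the only point that needs a moment's thought is the first one, namely recognizing that $\partial\Pi_i/\partial z^j$ is exactly $P_{w}e_j$ and therefore that the whole estimate is a Lipschitz bound for a family of line-projections, after which the eigenvalue computation and the appendix's uniform lower bound on $\abs{y\times\nu_i(x)}$ close the argument.
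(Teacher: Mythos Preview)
Your argument is essentially the same as the paper's: both recognize $\partial\Pi_i/\partial z^j$ as the orthogonal projection onto the line $\RR(y\times\nu_i(x))$ applied to $e_j$, compute the operator norm of the difference of two such line projections as $|\sin\tau|$ (you via the trace/eigenvalue identity, the paper via an explicit $3\times 3$ matrix in adapted coordinates), and then bound this by $|y^1-y^2|$ using $|(y^1-y^2)\times\nu_i|\le|y^1-y^2|$.

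The one point of divergence is the final constant. The paper passes directly from $\sin\tau\le |w^1-w^2|/|w^1|$ to $|y^1-y^2|$, which tacitly assumes $|y^1\times\nu_i(x)|\ge 1$; you instead route through the Lipschitz bound for normalization and end up with the factor $c_1^{-1}$, where $c_1$ is the uniform lower bound on $|y\times\nu_i(x)|$ from the appendix. Your observation that $c_1\to 1$ as $U$ shrinks to $\Sigma_0$ and $i\to\infty$ is correct (since then $y\perp\nu_i$ with both unit length), so your treatment is in fact the more honest of the two; the exact constant $1$ is not used anywhere in the applications, where the bound is immediately multiplied by a generic $c$.
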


\begin{proof}
First, by writing $\Lambda_j = (\delta_{1j}, \delta_{2j},\delta_{3j})$ we have
\begin{align*}
\frac{\partial \Pi_i}{\partial z^j}(x,y,z) &= \frac{ (y \times \nu_i(x)) \otimes (y \times \nu_i(x))\Lambda_j}{\abs{y \times \nu_i(x)}^2} \\
&=  A(y,x)\Lambda_j,
\end{align*}
where $A(x,y) = \frac{(y \times \nu_i(x)) \otimes (y \times \nu_i(x))}{\abs{y \times \nu_i(x)}^2}\in \mathbb{M}^{3\times 3}$.  We consider the linear map $h:\RR^3 \rightarrow \RR^3$ 
$$z \mapsto (A(y^1,x)- A(y^2,x))z,$$
and calculate the operator norm of $h$. Note that the map $v\mapsto A(x,y)v$ is the projection of $v$ onto the line spanned by $(y \times \nu_i(x))$.We assume that the plane spanned by $(y^1 \times \nu_i(x))$ and $(y^2 \times \nu_i(x))$ to be $\RR^2$ with
$$((y^1 \times \nu_i(x)) = (1,0,0) \text{ and }(y^2 \times \nu_i(x)) = (\cos(\tau),\sin(\tau),0),$$
where $\tau$ is the angle between $(y^1 \times \nu_i(x))$ and $(y^2 \times \nu_i(x))$. Then 
\begin{equation*}
A(y^1,x) - A(y^2,x) = \left[
\begin{matrix}
1- \cos^2(\tau)   & -\cos(\tau)\sin(\tau) & 0 \\
-\cos(\tau)\sin(\tau) & -\sin^2(\tau) & 0 \\
0 & 0 & 0
\end{matrix}
\right].
\end{equation*}
We can find the operator norm of this by finding the largest eigenvalue. A calculation yields
\begin{equation*}
\norm{h}_{op} = \abs{\sin(\tau)}.
\end{equation*}
%For $y \in U , x \in \Omega_{\varphi_i}$ and $i$ large enough so that $\Sigma_x^i \subset U$ there exists constants such that
%$$c_1 < \abs{(y \times \nu_i(x))} = \abs{y}\abs{\nu_i(x)}\abs{\sin(\tau)} < c_2.$$
Using elementary geometry we have
\begin{align*}
\sin(\tau) &\leq \frac{\abs{(y^1 \times \nu_i(x))-(y^2 \times \nu_i(x))}}{\abs{y^1 \times \nu_i(x)}} \\
&\leq  \abs{ y^1 - y^2}.
\end{align*}
Therefore
\begin{equation*}
\abs{ \left( \frac{(y^1 \times \nu_i(x)) \otimes (y^1 \times \nu_i(x))}{\abs{(y^1 \times \nu_i(x))}^2} - \frac{(y^2 \times \nu_i(x)) \otimes (y^2 \times \nu_i(x))}{\abs{(y^2 \times \nu_i(x))}^2}\right)\Lambda_j} \leq \abs{\Lambda_j}\abs{\sin(\tau)} \leq \abs{y^1 - y^2}.
\end{equation*}
\end{proof}
Using similar reasoning one can also prove the following.
\begin{lemma}\label{lem:diff between derivatives of pi 2}
There exists $C>0$ such that
\begin{equation*}
\abs{\frac{\partial \Pi_i^k}{\partial z_l}(x,y,z) - \frac{\partial \Pi_i^k}{\partial z_l}(0,y,z)} \leq C\abs{\nu_i(x) - \nu_i(0)},
\end{equation*}
for all $x\in \Omega_{\varphi_i}$, $y \in U$ and $z \in \RR^3$.
\end{lemma}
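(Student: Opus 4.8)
The plan is to adapt the argument of Lemma~\ref{lem:diff between derivatives of Pi} verbatim, replacing the role played there by the variable $y$ with the role played here by the base point $x$ of the normal field $\nu_i$. Recall that, writing $\Lambda_l = (\delta_{1l},\delta_{2l},\delta_{3l})$, we have
\begin{equation*}
\frac{\partial \Pi_i}{\partial z^l}(x,y,z) = A(x,y)\Lambda_l, \qquad A(x,y) := \frac{(y\times\nu_i(x))\otimes(y\times\nu_i(x))}{\abs{y\times\nu_i(x)}^2}.
\end{equation*}
Thus $\frac{\partial \Pi_i}{\partial z^l}(x,y,z) - \frac{\partial \Pi_i}{\partial z^l}(0,y,z) = \bigl(A(x,y)-A(0,y)\bigr)\Lambda_l$, and the quantity we must bound is controlled by the operator norm of the linear map $h:z\mapsto \bigl(A(x,y)-A(0,y)\bigr)z$, which is the difference of the two orthogonal projections onto the lines spanned by $y\times\nu_i(x)$ and $y\times\nu_i(0)$ respectively.

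First I would record that, since $U$ is taken sufficiently small and $i$ sufficiently large (as in Lemma~\ref{lemma:appendix bounds on derivatives of Pi}), there are constants $0<c_1<c_2$ with $c_1 < \abs{y\times\nu_i(x)} < c_2$ for all relevant $x,y$; in particular both lines in question are genuinely one-dimensional and vary continuously. Next, exactly as in the proof of Lemma~\ref{lem:diff between derivatives of Pi}, I would choose coordinates in which the plane spanned by $y\times\nu_i(x)$ and $y\times\nu_i(0)$ is $\RR^2\times\{0\}$, with $y\times\nu_i(0)$ along $(1,0,0)$ and $y\times\nu_i(x)$ making angle $\tau$ with it; then $A(x,y)-A(0,y)$ has the explicit $3\times 3$ form displayed there, whose operator norm equals $\abs{\sin\tau}$. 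The elementary geometric estimate $\abs{\sin\tau} \le \abs{(y\times\nu_i(x)) - (y\times\nu_i(0))}/\abs{y\times\nu_i(0)} = \abs{y\times(\nu_i(x)-\nu_i(0))}/\abs{y\times\nu_i(0)} \le c_1^{-1}\abs{y}\,\abs{\nu_i(x)-\nu_i(0)}$ then gives, using that $\abs{y}$ is bounded on $U$, the bound $\norm{h}_{\mathrm{op}} \le C\abs{\nu_i(x)-\nu_i(0)}$ for a suitable $C$. Since $\abs{\partial_{z_l}\Pi_i^k(x,y,z) - \partial_{z_l}\Pi_i^k(0,y,z)} \le \norm{h}_{\mathrm{op}}\abs{\Lambda_l} = \norm{h}_{\mathrm{op}}$, this is precisely the claimed inequality.

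There is essentially no obstacle here: the statement is a direct transcription of Lemma~\ref{lem:diff between derivatives of Pi} with the perturbed slot changed from $y$ to $x$, and the only mild point to be careful about is ensuring the denominators $\abs{y\times\nu_i(x)}$ stay bounded below uniformly, which is already guaranteed by the hypotheses inherited from the proof of Lemma~\ref{lem:Energy Improvement}. One could alternatively argue abstractly: the map $\ell \mapsto (\text{orthogonal projection onto }\ell)$, from the Grassmannian of lines in $\RR^3$ to $\mathbb{M}^{3\times 3}$, is smooth, hence Lipschitz on compact subsets, and $x\mapsto \mathrm{span}(y\times\nu_i(x))$ is Lipschitz into that Grassmannian with constant $O(\abs{\nu_i(x)-\nu_i(0)})$ by the above; composing gives the result. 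I would present the explicit-matrix version for consistency with the preceding lemma.
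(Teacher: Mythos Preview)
Your proposal is correct and follows essentially the same approach as the paper: the paper's own proof simply writes out $\partial_{z_l}\Pi_i(x,y,z)-\partial_{z_l}\Pi_i(0,y,z)$ as the difference of the two line projections $A(x,y)\Lambda_l-A(0,y)\Lambda_l$ and then says ``by the same reasoning as in the proof of Lemma~\ref{lem:diff between derivatives of Pi}'' to conclude. Your write-up actually supplies the details (the uniform lower bound $c_1$ on $\abs{y\times\nu_i}$, the $\abs{\sin\tau}$ computation, and the estimate $\abs{\sin\tau}\le c_1^{-1}\abs{y}\,\abs{\nu_i(x)-\nu_i(0)}$) that the paper leaves implicit.
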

\begin{proof}
We first calculate that
\begin{align*}
&\abs{\frac{\partial \Pi_i^k}{\partial z_l}(x,y,z) - \frac{\partial \Pi_i^k}{\partial z_l}(0,y,z)} = \\ &\quad  \abs{ \frac{ (y \times \nu_i(x)) \otimes (y \times \nu_i(x))\Lambda_j}{\abs{y \times \nu_i(x)}^2} - \frac{ (y \times \nu_i(0)) \otimes (y \times \nu_i(0))\Lambda_j}{\abs{y \times \nu_i(0)}^2} }.
\end{align*}
This is the difference between two projections on to two different lines,hence by the same reasoning as in the proof of Lemma \ref{lem:diff between derivatives of Pi}  we obtain
\begin{equation*}
\abs{\frac{\partial \Pi_i^k}{\partial z_l}(x,y,z) - \frac{\partial \Pi_i^k}{\partial z_l}(0,y,z)} \leq c\abs{\nu_i(x) - \nu_i(0)}
\end{equation*}
as required.
\end{proof}
\section{Notation}

\begin{itemize}
\item $\mbbE[n]=\int_\Omega \frac{K}{2}|\nabla n|^2\,dx+K_{13}\int_{\partial\Omega}((n\cdot \nabla)n)\cdot\nu\,d\sigma  $
\item $\mcA:=W^{1,2}(\Omega;\bS^{d-1})\cap W^{2,1}(\Omega;\bS^{d-1})$
\item $\Omega \subset \RR^n$ a is $C^2$  domain (See Theorem \ref{thm:bdrydet})
\item $\nu$ unit-norm exterior normal.
%\item $\tilde{\mcU}:=\{u \in W^{1,2}(\Omega, \SSS^2)\cap W^{2,1}(\Omega,\SSS^2): {\rm Trace}(u) \in \mathcal{T} \}  $
\item $\mathcal{T}:= \{\gamma \in H^{\frac{1}{2}}(\partial \Omega , \SSS^2): \gamma(x)\cdot \nu(x) = 0 \text{ for almost every } x \in \partial \Omega\} $
\item $ \mcU :=\{u \in W^{1,2}(\Omega, \SSS^2): {\rm Trace}(u) \in \mathcal{T} \} $
\item $\mathbb{G}[n]:= \int_\Omega K\sum_{\alpha,\beta=1}^d \frac{\partial n^\alpha}{\partial x^\beta}\frac{\partial n^\alpha}{\partial x^\beta} - K_{13}\int_{\partial \Omega} \sum_{\alpha,\beta=1}^d \frac{ \partial \nu^\beta}{\partial x^\alpha} n^\beta n^\alpha.$
\item $\liptwo{\varphi_{r,a}}:=\max_{\abs{\alpha} = 2} \sup_{x \in \RR^2}\abs{\frac{\partial^{|\alpha|} \varphi_{r,a}(x)}{\partial x^{\alpha}}}$
\item $C(x,r):= \{y \in \RR^3:\abs{(y^1,y^2)-(x^1,x^2)}<r , \abs{y^3 -x^3} <r\}$
\item $\Omega_{\varphi} = \{(x^1,x^2,x^3)\in C(0,1) : x^3 < \varphi(x^1,x^2)\}$
\item $\Omega_0 := \{(x^1,x^2,x^3)\in C(0,1) : x^3 < 0\}$
\item $G_{\varphi} = \partial \Omega_{\varphi} \setminus \partial C(0,1)$
\item $G_{0} = \partial \Omega_{0} \setminus \partial C(0,1)$
\item $H_{\varphi} = \partial \Omega_{\varphi} \cap \partial C(0,1)$
\item $H_{0} = \partial \Omega_{0} \cap \partial C(0,1)$
\item $\mathbb{G}_{\varphi}[u] : = \int_{\Omega_{\varphi}} \abs{\nabla u}^2 \, dx - K_{13} \int_{G_\varphi} u^\alpha u^\beta \frac{\partial \nu^\beta }{\partial x^\alpha}\, dx.$
\item $B^d(0,1)$ is the ball in $\RR^d$ centred at $0$ with radius 1.

\end{itemize}

\end{appendices}

\section*{Acknowledgment.} The activity of Arghir Zarnescu on this work was partially
supported by a grant of the Romanian National Authority for Scientific Research and Innovation,
CNCS-UEFISCDI, project number PN-II-RU-TE-2014-4-0657

\bibliographystyle{acm}
\bibliography{BIB_K13}

\begin{thebibliography}{10}

\bibitem{alouges1997minimizing}
{\sc Alouges, F., and Ghidaglia, J.-M.}
\newblock Minimizing {O}seen-{F}rank energy for nematic liquid crystals:
  algorithms and numerical results.
\newblock In {\em Annales de l'IHP Physique th{\'e}orique\/} (1997), vol.~66,
  pp.~411--447.

\bibitem{BrockerJanichBook}
{\sc Br\"{o}cker, T., and J\"{a}nich, K.}
\newblock {\em Introduction to differential topology}.
\newblock Cambridge University Press, Cambridge-New York, 1982.
\newblock Translated from the German by C. B. Thomas and M. J. Thomas.

\bibitem{Canevari2014}
{\sc Canevari, G., Segatti, A., and Veneroni, M.}
\newblock Morse's index formula in {VMO} for compact manifolds with boundary.
\newblock {\em J. Funct. Anal. 269\/} (2014), 2043--3082.

\bibitem{MeasureandFineProps}
{\sc Evans, L.~C., and Gariepy, R.~F.}
\newblock {\em Measure Theory and Fine Properties of Functions}.
\newblock CRC Press, 1992.

\bibitem{HardtLinKinderlehrer86}
{\sc Hardt, R., Kinderlehrer, D., and Lin, F.-H.}
\newblock Existence and partial regularity of static liquid crystal
  configurations.
\newblock {\em Communications in Mathematical Physics 105\/} (1986), 547--570.

\bibitem{HardtLin87}
{\sc Hardt, R., and Lin, F.~H.}
\newblock Mappings minimizing the {$L^p$} norm of the gradient.
\newblock {\em Communications on Pure and Applied Mathematics XL\/} (1987),
  555--588.

\bibitem{HardtLin89}
{\sc Hardt, R., and Lin, F.~H.}
\newblock Partially constrained boundary conditions with energy minimizing
  mappings.
\newblock {\em Communications on Pure and Applied Mathematics\/} (1989).

\bibitem{HelienWood08}
{\sc H\'{e}lein, F., and Wood, J.~C.}
\newblock Harmonic maps.
\newblock In {\em Global Analysis}. Elsevier Sci. B. V., 2008, pp.~417--491.

\bibitem{Hirschbook}
{\sc Hirsch, M.~W.}
\newblock {\em Differential topology}, vol.~No. 33 of {\em Graduate Texts in
  Mathematics}.
\newblock Springer-Verlag, New York-Heidelberg, 1976.

\bibitem{Lee_topManifolds}
{\sc Lee, J.~M.}
\newblock {\em Introduction to topological manifolds}.
\newblock Graduate Texts in Mathematics. Springer, New York, 2011.

\bibitem{Mazya02}
{\sc Mazya, V., and Shaposhnikova, T.}
\newblock On the {B}rezis and {M}ironescu conjecture concerning a
  {G}agliardo-{N}irenberg inequality for fractional {S}obolev norms.
\newblock {\em J. Math. Pures Appl. 81\/} (2002), 877--884.

\bibitem{Morse1929}
{\sc Morse, M.}
\newblock Singular points of vector fields under general boundary conditions.
\newblock {\em Amer. J. Math. 51}, 2 (1929), 165--178.

\bibitem{HitchhikersSobSpace}
{\sc Nezza, E.~D., Plaatucci, G., and Valdinoci, E.}
\newblock Hitchhikers's guide to the fractional {S}obolev spaces.
\newblock {\em Bull. Sci. math 136\/} (2012), 521--573.

\bibitem{OldanoBarbero85}
{\sc Oldano, C., and Barbero, G.}
\newblock An ab initio analysis of the second-order elasticity effect on
  nematic configurations.
\newblock {\em Physics Letters A 110}, 4 (1985), 213--216.

\bibitem{Scheven06}
{\sc Scheven, C.}
\newblock Variationally harmonic maps with general boundary conditions:
  boundary regularity.
\newblock {\em Calc. Var. Partial Differential Equations 25}, 4 (2006),
  409--429.

\bibitem{UhlenbeckSchoen82}
{\sc Schoen, R., and Uhlenbeck, K.}
\newblock A regularity theory for harmonic maps.
\newblock {\em J. Diff. Geometry 17\/} (1982), 307--335.

\bibitem{Spivak_Vol1}
{\sc Spivak, M.}
\newblock {\em A comprehensive introduction to differential geometry}, vol.~I.
\newblock Publish or Perish, Inc., Wilmington, Del., 1979.

\bibitem{taheri2015function}
{\sc Taheri, A.}
\newblock {\em Function Spaces and Partial Differential Equations: Volume
  2-Contemporary Analysis}.
\newblock Oxford University Press, 2015.

\bibitem{Virgabook}
{\sc Virga, E.~G.}
\newblock {\em Variational theories for liquid crystals}, vol.~8 of {\em
  Applied Mathematics and Mathematical Computation}.
\newblock Chapman \& Hall, London, 1994.

\end{thebibliography}

\end{document}